\renewcommand*\l@section{\@dottedtocline{1}{1.5em}{2.3em}}
\theoremstyle{plain}
\newtheorem{theorem}{Theorem}
\newtheorem{proposition}[theorem]{Proposition}
\newtheorem{lemma}[theorem]{Lemma}
\newtheorem{example}[theorem]{Example}
\newtheorem{corollary}[theorem]{Corollary}
\theoremstyle{definition}
\newtheorem{definition}{Definition}
\newtheoremstyle{myrem}
 {3pt}
 {3pt}
 {\normalsize}
 { }
 {\itshape}
 {:}
 { }
 {}
 \theoremstyle{myrem}
 \newtheorem{remark}{Remark}
 \appto\remark{\leftskip\parindent}
 \appto\remark{\rightskip\parindent}
\numberwithin{equation}{section}
\numberwithin{theorem}{section}
\begin{document}

\begin{center}
{\Large{\textbf{Double  complexes  for  configuration  spaces      and     hypergraphs  on  
  manifolds
 }}}

 \vspace{0.58cm}
 
 Shiquan Ren 

\bigskip

\bigskip

 \parbox{24cc}{{\small

{\textbf{Abstract}.}  
In  this  paper,  we  consider  hypergraphs  whose  vertices  are  distinct  points  moving 
smoothly    
on  a  Riemannian  manifold  $M$.  
We  take  these  hypergraphs  as   graded  submanifolds of     configuration  spaces.  
We  construct  double  complexes  of  differential  forms  on  configuration  spaces.  
Then  we  construct      double  complexes   of     differential  forms  on  hypergraphs 
which  are  sub-double  complexes  of  the  double  complex  for  the  ambient  configuration  space.  
Among  these  double  complexes  for  hypergraphs,    the  infimum  double  complex  and  the  
supremum  double  complex  
are  quasi-isomorphic 
concerning   the  boundary  maps   induced  from   vertex  deletion  of  the  hyperedges. 
In  particular,  all  the  double  complexes     are  identical  if  the  hypergraph  
is  a  $\Delta$-submanifold  of  the  ambient  configuration  space.  
  }

\bigskip

}

\begin{quote}
 {\bf 2020 Mathematics Subject Classification.}  	Primary  57N65,  57N75; Secondary   55U05, 	55U15.

{\bf Keywords and Phrases.}    configuration  spaces,   double  complexes,  
hypergraphs,  simplicial  manifolds  
\end{quote}

\end{center}

\vspace{1cc}

%

\section{Introduction}

\subsection{Configuration  spaces} 

Let  $M$  be  a  manifold.  
The  ordered  configuration  space ${\rm  Conf}_n(M)$  
is  the  manifold  consisting of  all  the  ordered  $n$-tuples  
$(x_1,\ldots,x_n)$  where  $x_1,\ldots,x_n$ are  distinct  points in  $M$ and  
the  unordered  configuration  space ${\rm  Conf}_n(M)/\Sigma_n$ 
is  the  orbit  space  of  the  $\Sigma_n$-action  on  ${\rm  Conf}_n(M)$.   
The  geometry  and  topology  of  configuration  spaces  has  been  extensively  studied  during  the last half century. 
For  example,
D. McDuff  \cite{mapping2}  in  1975, 
 W.  Fulton and R.  MacPherson  \cite{annals}  in  1994,  
P. Salvatore \cite{salvatore}  in 2004,  
T.  Church  \cite{inv}  in 2012,  
Y.  Baryshnikov, P.   Bubenik  and  M. Kahle  \cite{imrn1}  in 2014, 
T.  Church, J.  S. Ellenberg   and   B.  Farb  \cite{duke}  in 2015,  
A. Bianchi, J. Miller and J. C. H. Wilson  \cite{tams1}  in 2022,   etc.

 Throughout  the  study  of  the  geometry and  topology  of  configuration  spaces,  
 the   homology   and  cohomology      has  
 attracted  a  lot  of     attention. 
 In  recent  decades,  the   homology   and  cohomology    of  configuration  spaces 
 has been  extensively  investigated.  
  For  example,  C.-F. B\"odigheimer and F. R. Cohen  \cite{homol2} in 1987,  
    C.-F. B\"odigheimer,   F. Cohen   and L. Taylor 
 \cite{homol} in  1989, 
T.  Church  \cite{inv}  in 2012, T.  Church, J.  S. Ellenberg   and   B.  Farb  \cite{duke}  in 2015,  
  etc.

 Suppose  $M$  has  a  Riemannian  metric.  Besides  ${\rm  Conf}_n(M)$  
 and  ${\rm  Conf}_n(M)/\Sigma_n$,  for  each  $r\geq  0$  we  have  the  configuration spaces 
 of  hard  disks  and  hard spheres  of  radius  $r$  consisting  of  the $n$-tuples  of  
  distinct  points  in  $M$  with  pairwise  
 distance  greater than  $2r$.  
 The  configuration  spaces  of  hard  disks  and  hard spheres  
 are   motivated  by  physical considerations  and  
 have  significance  for  their   applications  in  phase  transitions,  motion  planning   and  optimal  packing.   Detailed  discussions  can  be  found  in  
  G.  Carlsson, J.  Gorham, M.  Kahle   and J.  Mason  \cite{phys}  in  2012,   
  Y.  Baryshnikov, P.   Bubenik  and  M. Kahle  \cite{imrn1}  in  2014,  
   H. Alpert, M.  Kahle  and  R. MacPherson \cite{confcomp}  in 2021, 
   H.   Alpert  and  Fedor Manin  \cite{gt}  in  2024,  etc.

\subsection{Chain  complexes for  hypergraphs}

Let  $V$  be  a  discrete  set  of  vertices.  
A  non-empty  finite  subset  $\sigma$ of  $V$  is  called  a  {\it  hyperedge}  on  $V$. 
If  $\sigma$  has  $n$  vertices,  then     $\sigma$  is  called   an  {\it $n$-hyperedge}.   
A    collection  $\mathcal{H}$  of  certain  hyperedges  on  $V$  is  called  a {\it hypergraph}.   If  each  hyperedge  in  $\mathcal{H}$  is  an  $n$-hyperedge,  
 then     $\mathcal{H}$   is called   an  {\it  $n$-uniform  hypergraph}   (cf.  \cite{berge}).  
 Moreover,  a  {\rm  directed  hyperedge} on  $V$  is  an  ordered  sequence  of  vertices  in  $V$  and    
 a  {\rm  hyperdigraph}  is  a  collection of  certain directed  hyperedges  on  $V$
 (cf.  \cite[Section~3.1]{hdg}).

An  (abstract)  simplicial  complex  $\mathcal{K}$   is  a  hypergraph   such  that  any  
 non-empty  subset  of  any  hyperedge  $\sigma$  in  $ \mathcal{K}$  is  still a  hyperedge  in $ \mathcal{K}$.    From  a  topological  point  of  view,  
 a  hypergraph  is  obtained  from  
 an  (abstract)   simplicial  complex  by  removing  certain  non-maximal  faces
 (cf.  \cite{parks}).

The  homology  and  cohomology      theory  of  hypergraphs  has  been  studied  
since  1990s.  In  1991,    A. D. Parks and S. L. Lipscomb  \cite{parks}  
considered  the  smallest  simplicial  complex  containing a  hypergraph  (called  the  
associated  simplicial  complex)  
and  used  the  homology  of  this simplicial  complex  to  investigate  the  hypergraph.  
In  1992,  F. R. K. Chung and R. L. Graham  \cite{chung}  
regarded  the  hyperedges  in   a  $k$-uniform  hypergraph  
as  $(k-1)$-chains  and  studied  the  mod  2  cohomology  by  considering  certain  induced  
group  structures.

 In  2019,  inspired  by  
the  path  complex  construction by  
A.  Grigor'yan,  Y.  Lin, Y.  Muranov  and   S.-T. Yau  \cite{lin2,lin6},  
S.  Bressan,  J.  Li,      J. Wu  and  the  present  author  
constructed  the  infimum  chain  complex  and  the 
supremum  chain  complex   for  a  hypergraph    in  \cite{h1}.     
 In  particular,  if  the  hypergraph  is  a  simplicial  complex,  
 then  both  the   infimum  chain  complex  and  the   supremum  chain  complex
 are  equal  to  the   usual  chain  complex.  
 In  2023,  D. Chen,    J.  Liu,  
  J. Wu   and G.-W.  Wei   constructed  
       chain  complexes,  homology  groups   and  Laplacians   for  hyperdigraphs  in  
  \cite{hdg},  which  has  significant  applications  in  data  science.

\subsection{Motivations}\label{subs-1.3-intro}

In  this  paper,  we  consider  hyper(di)graphs   on  manifolds,  i.e.  
  hyper(di)graphs  whose  vertices  are  lying  in a  manifold  $M$.  
  We  construct   double  complexes  for   hyper(di)graphs   on  manifolds.  
We  list  three  scenarios  that    hyper(di)graphs  on  manifolds 
as  well  as  the  double  complexes   are  prospectively  helpful:
 the  regular embedding of  manifolds,  the  motion  planning  with  constraints,  
  and  the  manifolds  learning  of  network data.

Firstly,  
 the canonical  bundles  over   hypergraphs  on  manifolds  and  
 their  characteristic  classes    in  the  cohomology    
might yield  new  topological  obstructions  
for  regular  embeddings  of   manifolds  in  Euclidean  spaces.

A  $k$-regular  embedding  of  a  manifold  into  a   Euclidean space
is  an  embedding  such  that  
 the  image  of   any distinct  $k$  points  are  linearly  independent.     
 The   regular  embedding   problem  is   initially  studied  
           by  K.  Borsuk \cite{Borsuk}   in  1957  and  has  connections  with    
            the   \v{C}eby\v{s}ev approximation  (cf. \cite{handel4} and \cite[pp. 237-242]{singer}, 
           the   Haar-Kolmogorov-Rubinstein  Theorem).  
            Some  topological  obstructions  for  regular  embeddings 
             using  the  Stiefel-Whitney  classes
            of  the  canonical  bundles  over  configuration  spaces    
            are   given               
           by  F.R. Cohen and D. Handel  \cite{cohen1}  in  1978 
           and  are   further  studied  by  P. Blagojevi$\check{\text{c}}$,   W. L{\"u}ck and G. Ziegler 
           \cite{high1}  in  2016   (cf.   Lemma~\ref{th-intro-1}~(1)).  
             Some  topological  obstructions  for  complex   regular  embeddings 
             using  the  Chern  classes
            of  the  complexification  of  the  canonical bundles  over  configuration  spaces    
            are   given               
           by P. Blagojevi$\check{\text{c}}$,  F. Cohen, W. L{\"u}ck and G. Ziegler
           \cite{high2}  in  2016 (cf.   Lemma~\ref{th-intro-1}~(2)).  
                      One  of  the  difficulties  of  the    regular  embedding  problem  
                      is  that 
                      the  cohomology  rings   of  configuration  spaces  
           as  well  as  the  characteristic  classes  of  the  canonical  bundles  over  configuration  spaces 
           are  difficult  to  calculate.

           Consider   any  $\Sigma_k$-invariant  
           $k$-uniform  hyperdigraph   $\vec{\mathcal{H}}_k(M)$  on  a  manifold $M$,  
           which  is  a  subspace  of   ${\rm  Conf}_k(M)$.  
           Let  $\mathcal{H}_k(M)= \vec{\mathcal{H}}_k(M) /\Sigma_k$.      
           Then  $\mathcal{H}_k(M)$  is  a  hypergraph  on  $M$,  
            which  is  a  subspace  of  ${\rm  Conf}_k(M)/\Sigma_k$.  
           The  canonical  bundle  over  ${\rm  Conf}_k(M)/\Sigma_k$   
           will induce  a  pull-back  bundle  over   $\mathcal{H}_k(M)$.    
           If there  exists  a  $k$-regular  embedding  of  $M$  in  a  Euclidean  space,  then  this  embedding 
            will  induce  a  classifying  map  of  the  canonical   bundle  over   $\mathcal{H}_k(M)$.
           Thus  the  characteristic  classes  of the  canonical  bundle  over   $\mathcal{H}_k(M)$ 
                      will  give  obstructions  for  regular  embeddings  of  $M$ 
           (cf.  Proposition~\ref{th-intro-2}).  
           If  the  cohomology  ring   of   $\mathcal{H}_k(M)$    
           as  well  as  the  characteristic  classes  of  the  canonical  bundle   over   $\mathcal{H}_k(M)$   
           can  be  calculated,   then  lower  bounds  of  the  dimension  of  the  ambient  
           Euclidean  space  for  regular  embeddings  of  $M$   could  be  investigated.

Secondly,   hypergraphs  on  manifolds  could  describe  motion  planning  problems 
 with   certain   constraints.

 Applications  of  configuration  spaces  in  motion  planning  problems  are  
 studied  by  M.  Farber \cite{farb08}  in  2008  and  \cite{Farber}  in  2017.   
 The  motion planning problem  requires   to  consider    continuous  motions  of  $k$  particles  from 
 a  start
state to a goal state that avoid   mutual collisions  as  well  as  collisions  with  obstacles.   
 The  motion planning question  by   M.  Farber   asks to find continuous sections of the  canonical  map 
 from the path space of  $X$  to  $X\times  X$  where  $X$  is  the  $k$-th  configuration  space
 of  $M$.  
In  addition,  if   we  require   that  the  $k$   particles 
   do  not  violate   certain  additional   constraints,  
 then  the  motion  planning  problem  requires  to  consider   continuous  paths 
 in  certain  $k$-uniform  hyper(di)graphs  on  manifolds
  that  are  subspaces  of  the  configuration  space.

For  example,   in  a  motion  planning  problem,   we   may   require   the  following  
additional  constraints:   
 three  points  among  the  $k$  particles    form  an   equilateral triangle;    
 three  points  among  the  $k$  particles  lie   on  a  common  line;    
 four  points  among  the  $k$  particles  lie   on  a  common circle;  
 the    position   of  the  $k$  particles   is   convex,  
 i.e.  all  the  $k$  points  lie  in  the  boundary  of  the  convex  hull;  etc. 
  Under   each  of   these  constraints,  
   the  motion planning  question  requires  
   to  consider  continuous  paths  in     
  a  hyper(di)graph  on  a  manifold,  which  is   
  contained  as  a  subspace  of  the  configuration  space.

Thirdly,     hypergraphs  on  manifolds   give  a  mathematical  model  for          
manifold  learning  of   network  data.

Hypergraphs  can  be  used  as   models  representing  the  data  from  higher-order  networks 
  (for  examples,  \cite{hdg, siam1, wulaoshi}). 
    In  various  scenarios,        the observed data lie on       a low-dimensional
manifold embedded in a higher-dimensional space  (for  examples,  \cite{mfdln2,mfdln1}). 
Manifold learning,   also  known  as non-linear dimension reduction, is a  fast-developing  area 
 whose  aim  is  to find the low dimensional structure of data in  order  to 
  reveal the geometric shape of high dimensional point clouds  (for  example,  refer  to  \cite{mfd-ln}). 
  To  study  point  clouds  with  relations  represented  by    networks 
  whose  points  lie  on       a low-dimensional
manifold embedded  in a high-dimensional space,     
it  is  reasonable  to   represent  the  point  clouds  by  hypergraphs  on  manifolds.

  With  the  help  of  the  model  of   hypergraphs  on  manifolds,  
   persistent  homology  and  Laplacians  can  be  applied  to  analyze  the  data  of  
    networks  with    vertices   on  manifolds.  
Persistent  homology and  Laplacians  are  important  tools    
 to  analyze  data  (for  examples,  \cite{  lap1,lap2, cph1, wulaoshi,hdg}).  
  From  a  mathematical  point  of view,  chain  complexes  as  well  as  double     complexes  
  give     foundations  for  homology  groups  and  Hodge-Laplacians. 
  To  analyze   the  higher-order  networks  whose    vertices  
  lie  on      a     
manifold  by  using  persistent  homology  and   Hodge-Laplacians,  
   the   chain  complexes  as  well  as  the   double     complexes
   give  a  significant    part  of  the   theoretical  foundations.  

\subsection{Main  results}

  In  this  paper,  we  take the   hypergraphs   with  vertices  
  lying  on     Riemannian  manifold    as  a  mathematical   model  for   the       scenarios
  in  Subsection~\ref{subs-1.3-intro}.   We  
  embed    the   hypergraphs   with  vertices  
 lying  on  a   Riemannian  manifold  as   submanifolds  of  the  configuration  spaces  of  
 the  Riemannian  manifold 
  and  construct  
  certain  double    complexes  of  differential  forms.

We  take  $V$  to  be  all  the  possible   embedded  subsets    of   a  Riemannian  manifold  $M$. 
We  consider  
the  hyperdigraphs  
$\vec{\mathcal{H}}(M)$   on  $V$  
as  well  as    the  hypergraphs   $\mathcal{H}(M)$  on  $V$.  
Let the  vertices  in  $V$  run  over  all  the  possible  positions  in $M$  smoothly.  
Then    $\vec{\mathcal{H}}(M)$   is  a  subspace  of  the  ordered  configuration  space 
$\bigcup_{n\geq  1}{\rm  Conf}_n(M)$  
and  $\mathcal{H}(M)$   is  a  subspace  of  the  unordered  configuration  space  
  $\bigcup_{n\geq  1}{\rm  Conf}_n(M)/\Sigma_n$.

 We  call  an  $\mathbb{N}$-indexed    family     of  manifolds  a  {\it  $\Delta$-manifold}  if  
 this   family  of   manifolds   has  a  $\Delta$-set  structure  such that   all  the    face  maps  
 $\partial_n^i$ 
  are  smooth  maps   for  any  $i=1,\ldots,n$  and  any  $n\in  \mathbb{N}$.    
 Note  that   $\bigcup_{n\geq  1}{\rm  Conf}_n(M) $  is  a   $\Delta$-manifold.  
 Moreover,  if  $M$  can  be  embedded  in  $\mathbb{R}$ hence  continuously  
  inherits a  total  order  from  
 $\mathbb{R}$,  then  
   $\bigcup_{n\geq  1}{\rm  Conf}_n(M)/\Sigma_n$   is  a   $\Delta$-manifold.    
   Let  $\Omega^\bullet(-)$  be  the  space  of  
 differential  forms with  exterior  derivative  $d$.   
 The  next  theorem   follows   directly   from  
  Theorem~\ref{pr-ab1}  and  Theorem~\ref{pr-ab122222}.  
 
  \begin{theorem}
 We  have  a     double  complex  
 \begin{eqnarray*}
 (\Omega^\bullet({\rm  Conf}_\bullet(M)),  d,\partial)
 \end{eqnarray*}
    with   
a  graded  group  action  of  $\Sigma_n$  on  $\Omega^\bullet({\rm  Conf}_n(M))$  for  each 
$n\geq  1$  which  is  commutative  with  $d$.  
 In  addition,  if  $M$  can  be  embedded  in  $\mathbb{R}$,  
 then  we  have   a   sub-double  complex   
  \begin{eqnarray*}
 (\Omega^\bullet({\rm  Conf}_\bullet(M)/\Sigma_\bullet),  d,\partial)
  \end{eqnarray*}
       consisting  of  the  $\Sigma_\bullet$-invariant  differential  forms.   
       \label{th-int-1}
 \end{theorem}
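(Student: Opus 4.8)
The plan is to build the two differentials independently and then verify the three bicomplex identities; the ambient object together with its symmetric group action is the content of Theorem~\ref{pr-ab1}, and the passage to the invariant subobject is Theorem~\ref{pr-ab122222}.

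First I would fix the bigrading with exterior degree $p$ and configuration degree $n$, placing $\Omega^p({\rm Conf}_n(M))$ in bidegree $(p,n)$. The vertical differential is the exterior derivative $d$, so $d^2=0$ is automatic. For the horizontal differential I would exploit that $\bigcup_{n\ge 1}{\rm Conf}_n(M)$ is a $\Delta$-manifold: the vertex-deletion maps $\partial_n^i\colon{\rm Conf}_n(M)\to{\rm Conf}_{n-1}(M)$ are smooth, so each pulls forms back, and I set $\partial=\sum_{i=1}^n(-1)^i(\partial_n^i)^*$ from $\Omega^p({\rm Conf}_{n-1}(M))$ to $\Omega^p({\rm Conf}_n(M))$. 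Dualizing the simplicial face relations $\partial_{n-1}^i\circ\partial_n^j=\partial_{n-1}^{j-1}\circ\partial_n^i$ (for $i<j$) by pullback gives the usual alternating-sign cancellation, hence $\partial^2=0$; and the naturality identity $f^*d=df^*$, applied to each smooth $\partial_n^i$, gives $d\partial=\partial d$. Inserting a single sign $(-1)^p$ turns this commutation into the anticommutation convention of a bicomplex if one prefers that. This establishes the double complex $(\Omega^\bullet({\rm Conf}_\bullet(M)),d,\partial)$.

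The symmetric group action is then routine: each $\tau\in\Sigma_n$ permutes the coordinates of ${\rm Conf}_n(M)$ through a diffeomorphism $L_\tau$, so $\omega\mapsto L_\tau^*\omega$ is a degree-preserving action on $\Omega^\bullet({\rm Conf}_n(M))$, and it commutes with $d$ exactly because pullback along a diffeomorphism commutes with the exterior derivative. This gives the first assertion of the theorem.

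For the second assertion I would assume $M\hookrightarrow\mathbb{R}$ and use the inherited total order to present ${\rm Conf}_n(M)/\Sigma_n$ as the $\Delta$-manifold of tuples $x_1<\cdots<x_n$ with order-preserving face maps $\bar\partial_n^i$; the covering projection $\pi_n$ then identifies $\Omega^\bullet({\rm Conf}_n(M)/\Sigma_n)$ with an invariant subspace of $\Omega^\bullet({\rm Conf}_n(M))$. Stability under $d$ is immediate from the commutation above, so everything reduces to showing that $\partial$ preserves this subspace, and this is the main obstacle. The difficulty is structural: $\partial$ couples the two distinct groups $\Sigma_{n-1}$ and $\Sigma_n$, so no naive equivariance is available, and a direct computation of $L_\tau^*\partial\omega$ reindexes via the relation $\partial_n^i\circ L_\tau=L_{\rho(\tau,i)}\circ\partial_n^{\tau^{-1}(i)}$ into $\sum_j(-1)^{\tau(j)}(\partial_n^j)^*L_{\rho}^*\omega$, whose signs do not match $\partial\omega$ term by term. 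The resolution is exactly where the total order enters: on the ordered fundamental domain one has $\pi_{n-1}\circ\partial_n^i=\bar\partial_n^i\circ\pi_n$, and the sign defect is absorbed once invariance is read off with the sign $\mathrm{sgn}(\rho(\tau,i))$ of the induced permutation, the combinatorial identity reconciling $\mathrm{sgn}(\rho)$, $(-1)^{\tau(j)}$ and $(-1)^j$ being the heart of the matter. Carrying this sign through shows that $\partial$ maps the invariant subspace to itself, so that the quotient forms constitute a sub-double complex, which is the desired conclusion.
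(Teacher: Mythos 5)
Your construction of the ambient double complex and of the $\Sigma_n$-action is correct and coincides with the paper's proof of Theorem~\ref{pr-ab1}: the $\Delta$-manifold structure of Lemma~\ref{le-8.2} is fed into Lemma~\ref{le-2.1}, and the action is by pullback along the coordinate-permuting diffeomorphisms, which commutes with $d$.

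The second half of your argument has a genuine gap, located exactly at the step you call ``the heart of the matter.'' You assert that, after carrying the sign ${\rm sgn}(\rho(\tau,i))$ of the induced permutation through the computation, one finds that $\partial$ maps the $\Sigma_\bullet$-invariant subspace to itself. That conclusion is false. Take $M=\mathbb{R}$ and any $f\in\Omega^0({\rm  Conf}_1(\mathbb{R}))$, which is trivially invariant; with the paper's conventions $\partial f(x_1,x_2)=f(x_2)-f(x_1)$, so $\partial f(x_2,x_1)=-\partial f(x_1,x_2)$. The image is anti-invariant, hence the plain invariants are not closed under $\partial$, and no amount of sign bookkeeping inside the sum defining $\partial$ can change this. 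What the cofactor-type identity relating ${\rm sgn}(\rho(\tau,i))$, ${\rm sgn}(\tau)$ and the positions of the deleted indices actually proves is that $\partial$ preserves the subspace of forms transforming by the sign character, $L_\tau^*\omega={\rm sgn}(\tau)\,\omega$. The quotient forms land in that subspace not under $\pi_\bullet^*$ (whose image is the plain invariants) but under the twisted embedding $\omega\mapsto\varepsilon\cdot\pi_\bullet^*\omega$, where $\varepsilon$ is the locally constant function assigning to a configuration the sign of its sorting permutation; with this twist, your identity $\pi_{n-1}\circ\partial^i_n=\bar\partial^i_n\circ\pi_n$ on the ordered fundamental domain does yield the desired intertwining, because both sides are sign-twisted invariant and may be compared on that domain.

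For comparison, the paper never attempts to transport $\partial$ across $\pi_\bullet^*$ at all, which is why this issue does not surface in its proof of Theorem~\ref{pr-ab122222}. There the horizontal differential on $\Omega^\bullet({\rm  Conf}_\bullet(M)/\Sigma_\bullet)$ is intrinsic: Lemma~\ref{le-8.222} shows that the continuous total order makes ${\rm  Conf}_\bullet(M)/\Sigma_\bullet$ a $\Delta$-manifold in its own right (the $i$-th face deletes the $i$-th smallest point), so Lemma~\ref{le-2.1} applies downstairs directly, and $\pi_n^*$ is used only to identify $\Omega^\bullet({\rm  Conf}_n(M)/\Sigma_n)$ degreewise, compatibly with $d$, with the invariant forms upstairs. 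Your instinct to check compatibility of $\partial$ with that identification is legitimate---it is precisely what a literal reading of ``sub-double complex consisting of the invariant forms'' demands---but the check succeeds only after replacing ``invariant'' by ``sign-twisted invariant'' and embedding via $\varepsilon\cdot\pi_\bullet^*$; as written, your final step fails.
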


    Let  $\vec{\mathcal{H}}_n(M)$  be  the  collection  of  all the  directed  
    $n$-hyperedges  in  $\vec{\mathcal{H}}(M)$  and  
 let  $\mathcal{H}_n(M)$  be  the  collection  of  all the  $n$-hyperedges  in  $\mathcal{H}(M)$.  
 We  suppose  in  addition  that  
 each  $\vec{\mathcal{H}}_n(M)$  as  well  as   each  $\mathcal{H}_n(M)$  
 are  differentiable  manifolds  hence  
 $\vec{\mathcal{H}}(M)=\bigcup_{n\geq  1}\vec{\mathcal{H}}_n(M) $  is  a  graded  submanifold  of  
 $\bigcup_{n\geq  1}{\rm  Conf}_n(M)$   and  $\mathcal{H}(M)=\bigcup_{n\geq  1}  {\mathcal{H}}_n(M) $  is  a  graded  submanifold  of  
 $\bigcup_{n\geq  1}{\rm  Conf}_n(M)/\Sigma_n$.    
 This  assumption  is  natural  motivated  
 by  manifold  learning.   Under  this  assumption,  
 both  $\vec{\mathcal{H}}_n(M)$  and   $\mathcal{H}_n(M)$  
 can  be  studied  by  applying  the  tools  in  differential geometry and  the  topology
  of  manifolds.

   Let  $\Delta \vec{\mathcal{H}} (M)$  be  the  smallest  $\Delta$-manifold  containing  
 $\vec{\mathcal{H}} (M)$.  
 Let $\delta\vec{\mathcal{H}} (M)$  be  the  largest  $\Delta$-manifold  contained  in  
 $\vec{\mathcal{H}} (M)$.  
 Let  $\Delta  {\mathcal{H}} (M)$  be  the  smallest  $\Delta$-manifold  containing  
 $ {\mathcal{H}} (M)$.  
 Let $\delta {\mathcal{H}} (M)$  be  the  largest  $\Delta$-manifold  contained  in  
 $ {\mathcal{H}} (M)$.   The  next theorem  is  the  ordered  version  of  the     main  result.

 \begin{theorem}[Main  result  I]
For  any   hyperdigraph  $\vec{\mathcal{H}}(M)$  on  $M$,  
 we have  surjective  homomorphisms  of  double  complexes 
 \begin{eqnarray*} 
&&(\Omega^\bullet( \Delta \vec{\mathcal{H}}(M)),  d,\partial)\longrightarrow   ({\rm  Sup}^\bullet(\vec{\mathcal{H}} (M)), d,\partial)
\nonumber\\
&&\overset{q}{\longrightarrow} ( {\rm  Inf}^\bullet(\vec{\mathcal{H}} (M)),   d,\partial)
\longrightarrow  (\Omega^\bullet( \delta \vec{\mathcal{H}}(M)),  d,\partial).  
 \end{eqnarray*}
 such  that 
 \begin{enumerate}[(1)]
 \item 
 $q$  is  a  quasi-isomorphism  with  respect to $\partial$;
 \item 
   $  \vec{\mathcal{H}}(M) $  is  a  $\Delta$-submanifold  of  ${\rm   Conf}_\bullet(M)$
  iff  all   the three  homomorphisms  are  the  identity.   
  \end{enumerate}
    \label{th-int-2}
\end{theorem}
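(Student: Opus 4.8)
The plan is to realize all four double complexes inside the single genuine double complex $(\Omega^\bullet(\Delta\vec{\mathcal{H}}_\bullet(M)), d, \partial)$ furnished by Theorem~\ref{th-int-1} applied to the $\Delta$-manifold $\Delta\vec{\mathcal{H}}(M)$, and to obtain ${\rm Sup}$, ${\rm Inf}$ and $\Omega^\bullet(\delta\vec{\mathcal{H}}(M))$ as successive quotients of it. Writing $A^p_n=\Omega^p(\Delta\vec{\mathcal{H}}_n(M))$ and letting $J^p_n\subseteq A^p_n$ be the subspace of forms whose restriction to $\vec{\mathcal{H}}_n(M)$ vanishes, I would realize the differential-form analogues of the infimum and supremum complexes of \cite{h1} as
\begin{eqnarray*}
{\rm Sup}^p_n(\vec{\mathcal{H}}(M))=A^p_n/\big(J^p_n\cap\partial^{-1}J^p_{n+1}\big), \qquad {\rm Inf}^p_n(\vec{\mathcal{H}}(M))=A^p_n/\big(J^p_n+\partial J^p_{n-1}\big),
\end{eqnarray*}
together with $\Omega^p(\delta\vec{\mathcal{H}}_n(M))=A^p_n/J^{p,\delta}_n$, where $J^{p,\delta}_n$ is the space of forms vanishing on $\delta\vec{\mathcal{H}}_n(M)$. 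First I would verify that $d$ and $\partial$ descend to each quotient: $d$ commutes with restriction and with $\partial$, while $\partial^2=0$ forces $\partial$ to preserve the three subspaces. The inclusions $J^p_n\cap\partial^{-1}J^p_{n+1}\subseteq J^p_n\subseteq J^p_n+\partial J^p_{n-1}\subseteq J^{p,\delta}_n$ then make the four terms double complexes and exhibit the three comparison maps as surjective morphisms induced by the identity of $A^p_n$; the last inclusion holds because the face maps send $\delta\vec{\mathcal{H}}(M)$ into $\delta\vec{\mathcal{H}}(M)\subseteq\vec{\mathcal{H}}(M)$. Surjectivity of the restriction maps relating the underlying form spaces I would record as a separate lemma, proved by extending forms off an embedded submanifold through a tubular neighborhood and a partition of unity.

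The heart of the argument is the quasi-isomorphism of $q$ with respect to $\partial$. Since the whole construction is performed degreewise in the exterior degree $p$ and $d$ commutes with $\partial$, it suffices to fix $p$ and show that $q\colon A^p_n/(J^p_n\cap\partial^{-1}J^p_{n+1})\to A^p_n/(J^p_n+\partial J^p_{n-1})$ induces an isomorphism on $\partial$-cohomology. Computing $\partial$-cocycles and $\partial$-coboundaries reduces this to the purely formal identity
\begin{eqnarray*}
\partial^{-1}J^p_{n+1}\cap\big(\partial A^p_{n-1}+J^p_n+\partial J^p_{n-1}\big)=\partial A^p_{n-1}+\big(J^p_n\cap\partial^{-1}J^p_{n+1}\big),
\end{eqnarray*}
valid for any cochain complex equipped with an arbitrary family of subspaces $J^p_n$. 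The nontrivial inclusion is $\subseteq$: given $\omega=\partial a+j+\partial k$ with $\partial\omega\in J^p_{n+1}$, $a\in A^p_{n-1}$, $j\in J^p_n$, $k\in J^p_{n-1}$, applying $\partial$ gives $\partial j=\partial\omega\in J^p_{n+1}$, so that $j\in J^p_n\cap\partial^{-1}J^p_{n+1}$ and $\omega=\partial(a+k)+j$ lies in the right-hand side. Combined with the fact that the classes of $J^p_n$ already vanish on the ${\rm Inf}$ side, so that $\partial^{-1}J^p_{n+1}$ surjects onto its $\partial$-cohomology, this identity shows $q$ is an isomorphism on $\partial$-cohomology in every exterior degree. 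This is the infimum--supremum comparison of \cite{h1} transported to the double complex of differential forms, and I expect the main obstacle to be the bookkeeping of the subspaces $J^p_n$ and the verification that $\partial$ genuinely descends to the quotients, rather than any analytic difficulty, since the cohomological computation uses only linearity and $\partial^2=0$.

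For part (2) the forward implication is immediate: if $\vec{\mathcal{H}}(M)$ is a $\Delta$-submanifold of ${\rm Conf}_\bullet(M)$ then it is closed under the face maps, so $\delta\vec{\mathcal{H}}(M)=\vec{\mathcal{H}}(M)=\Delta\vec{\mathcal{H}}(M)$, every $J^p_n$ vanishes, and all three quotient maps are the identity. For the converse I would argue along the composite $\Omega^\bullet(\Delta\vec{\mathcal{H}}(M))\to\Omega^\bullet(\delta\vec{\mathcal{H}}(M))$, which is restriction of forms. If all three maps are identities then so is this composite; but if $\delta\vec{\mathcal{H}}_n(M)\subsetneq\Delta\vec{\mathcal{H}}_n(M)$ for some $n$, the complement is a nonempty open subset of the manifold $\Delta\vec{\mathcal{H}}_n(M)$ supporting a nonzero smooth bump function, a nonzero $0$-form restricting to $0$ on $\delta\vec{\mathcal{H}}_n(M)$, contradicting injectivity of the restriction. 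Hence $\delta\vec{\mathcal{H}}_n(M)=\Delta\vec{\mathcal{H}}_n(M)$ for every $n$, which squeezes $\vec{\mathcal{H}}_n(M)$ between them to force equality, so $\vec{\mathcal{H}}(M)=\Delta\vec{\mathcal{H}}(M)$ is a $\Delta$-submanifold.
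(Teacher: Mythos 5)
Your construction of the four double complexes and your proof of part (1) are correct and are essentially the paper's own argument: your subspaces $J^p_n$, $J^p_n\cap\partial^{-1}J^p_{n+1}$ and $J^p_n+\partial J^p_{n-1}$ are exactly the paper's ${\rm Ker}\,\epsilon_n^\#$, ${\rm Inf}_n(\Omega^\bullet(\epsilon_\bullet))$ and ${\rm Sup}_n(\Omega^\bullet(\epsilon_\bullet))$ of Subsection~\ref{ss2.3a}, computed inside the ambient $\Delta$-manifold $\Delta\vec{\mathcal{H}}(M)$ (legitimate by Lemma~\ref{le-hqlca7} and Lemma~\ref{le-mboay7}), and your formal identity
\begin{eqnarray*}
\partial^{-1}J^p_{n+1}\cap\big(\partial A^p_{n-1}+J^p_n+\partial J^p_{n-1}\big)=\partial A^p_{n-1}+\big(J^p_n\cap\partial^{-1}J^p_{n+1}\big)
\end{eqnarray*}
is precisely the computation behind the paper's Lemma~\ref{le-0.zam1}; the assembly then follows the paper's Lemma~\ref{le-1.1}, Lemma~\ref{le-sqc}, Theorem~\ref{th-main1} and Theorem~\ref{th-main1aa}.

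The genuine gap is in your converse for part (2). From the hypothesis that all three homomorphisms are the identity you retain only the \emph{injectivity} of the restriction $\Omega^\bullet(\Delta\vec{\mathcal{H}}_n(M))\longrightarrow\Omega^\bullet(\delta\vec{\mathcal{H}}_n(M))$, and you then assert that $\delta\vec{\mathcal{H}}_n(M)\subsetneq\Delta\vec{\mathcal{H}}_n(M)$ would make the complement a nonempty \emph{open} set carrying a bump function. That assertion fails: $\delta\vec{\mathcal{H}}_n(M)$ need not be closed in $\Delta\vec{\mathcal{H}}_n(M)$, and the complement can have empty interior. Concretely, on $M=\mathbb{R}$ take $\vec{\mathcal{H}}_1(M)=\mathbb{R}\setminus\{0\}$, $\vec{\mathcal{H}}_2(M)={\rm Conf}_2(\mathbb{R})$, and $\vec{\mathcal{H}}_n(M)=\emptyset$ for $n\geq 3$. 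Then $\Delta\vec{\mathcal{H}}_1(M)=\mathbb{R}$ while $\delta\vec{\mathcal{H}}_1(M)=\mathbb{R}\setminus\{0\}$, the complement is the single point $0$, and restriction of smooth forms \emph{is} injective because $\mathbb{R}\setminus\{0\}$ is dense; yet $\vec{\mathcal{H}}(M)$ is not a $\Delta$-submanifold, since $(0,1)\in\vec{\mathcal{H}}_2(M)$ has the face $(0)\notin\vec{\mathcal{H}}_1(M)$. So the intermediate statement your argument rests on --- injective restriction implies $\delta\vec{\mathcal{H}}(M)=\Delta\vec{\mathcal{H}}(M)$ --- is false, and no bookkeeping can rescue it. The repair is to use the full strength of the hypothesis, as the paper does in Lemma~\ref{le-sqc}: if the third arrow is literally the identity, then $\Omega^\bullet(\Delta\vec{\mathcal{H}}_n(M))$ and $\Omega^\bullet(\delta\vec{\mathcal{H}}_n(M))$ are the same space and each form equals its own restriction; since a form (already a $0$-form) determines its domain, this forces $\Delta\vec{\mathcal{H}}_n(M)=\delta\vec{\mathcal{H}}_n(M)$, after which your squeeze $\delta\vec{\mathcal{H}}(M)\subseteq\vec{\mathcal{H}}(M)\subseteq\Delta\vec{\mathcal{H}}(M)$ finishes. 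A further caveat, which the example above also exposes and which is shared with the paper: surjectivity of restriction onto $\Omega^\bullet(\delta\vec{\mathcal{H}}(M))$ can fail for submanifolds that are not closed (the function $1/x$ on $\mathbb{R}\setminus\{0\}$ extends to no smooth function on $\mathbb{R}$), so the extension lemma you defer to a tubular neighborhood needs a properness hypothesis that neither your setup nor the paper's grants.
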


 The  next theorem  is  the  unordered  version  of  the     main  result.  

\begin{theorem}[Main  result  II]
If  $M$  can  be  embedded  in  $\mathbb{R}$  
  and  consequently  has  a  continuous  total  order,  then  for  any   hypergraph  ${\mathcal{H}}(M)$  on  $M$,  
 we have  surjective  homomorphisms  of  double  complexes 
 \begin{eqnarray*} 
&&(\Omega^\bullet( \Delta  {\mathcal{H}}(M)),  d,\partial)\longrightarrow   ({\rm  Sup}^\bullet( {\mathcal{H}} (M)), d,\partial)
\nonumber\\
&&\overset{q}{\longrightarrow} ( {\rm  Inf}^\bullet( {\mathcal{H}} (M)),   d,\partial)
\longrightarrow  (\Omega^\bullet( \delta {\mathcal{H}}(M)),  d,\partial).  
 \end{eqnarray*}
 such  that  
 \begin{enumerate}[(1)]
 \item
 $q$  is  a  quasi-isomorphism  with  respect to $\partial$;
 \item 
   $  {\mathcal{H}}(M) $  is  a  $\Delta$-submanifold  of  ${\rm   Conf}_\bullet(M)/\Sigma_\bullet$
  iff  all   the three  homomorphisms  are  the  identity.   
  \end{enumerate}
    \label{th-int-3}
\end{theorem}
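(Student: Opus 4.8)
The plan is to deduce Main result~II from Main result~I (Theorem~\ref{th-int-2}) by passing to $\Sigma_\bullet$-invariants, exploiting the total order on $M$ supplied by the embedding $M\hookrightarrow\mathbb{R}$. First I would record the bridge between the unordered and ordered settings. Given a hypergraph $\mathcal{H}(M)$, let $\vec{\mathcal{H}}(M)$ be the hyperdigraph consisting of all orderings of all hyperedges of $\mathcal{H}(M)$; this is $\Sigma_\bullet$-invariant by construction and $\mathcal{H}(M)=\vec{\mathcal{H}}(M)/\Sigma_\bullet$. The total order lets one sort the points of an unordered configuration, so that the unordered face map $\partial_n^i$ (delete the $i$-th smallest point) is the descent of the ordered face map through the free quotient; this is precisely the $\Delta$-manifold structure furnished by Theorem~\ref{th-int-1}, which also identifies $(\Omega^\bullet({\rm Conf}_\bullet(M)/\Sigma_\bullet),d,\partial)$ with the $\Sigma_\bullet$-invariant sub-double-complex of $(\Omega^\bullet({\rm Conf}_\bullet(M)),d,\partial)$.

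With this dictionary in place, the second step is to identify each of the four double complexes appearing in Theorem~\ref{th-int-3} as the $\Sigma_\bullet$-invariant part of the corresponding double complex in Theorem~\ref{th-int-2}. Concretely I would check the four equalities $\Omega^\bullet(\Delta\mathcal{H}(M))=\Omega^\bullet(\Delta\vec{\mathcal{H}}(M))^{\Sigma_\bullet}$, ${\rm Sup}^\bullet(\mathcal{H}(M))={\rm Sup}^\bullet(\vec{\mathcal{H}}(M))^{\Sigma_\bullet}$, ${\rm Inf}^\bullet(\mathcal{H}(M))={\rm Inf}^\bullet(\vec{\mathcal{H}}(M))^{\Sigma_\bullet}$, and $\Omega^\bullet(\delta\mathcal{H}(M))=\Omega^\bullet(\delta\vec{\mathcal{H}}(M))^{\Sigma_\bullet}$, using that the closure and co-closure operations $\Delta(-)$ and $\delta(-)$ commute with the free quotient by $\Sigma_\bullet$ once faces are taken in the sorted order. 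The three homomorphisms in Theorem~\ref{th-int-3} are then simply the restrictions to $\Sigma_\bullet$-invariants of the three ($\Sigma_\bullet$-equivariant) homomorphisms in Theorem~\ref{th-int-2}. Since we work over $\mathbb{R}$ and each $\Sigma_n$ is finite, the averaging operator $\frac{1}{|\Sigma_n|}\sum_{g\in\Sigma_n}g^*$ is a projection onto invariants, so the functor $(-)^{\Sigma_\bullet}$ is exact; in particular it carries surjections to surjections, which yields the surjectivity of all three maps from that of Theorem~\ref{th-int-2}.

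For part~(1), I would use the same exactness of $(-)^{\Sigma_\bullet}$ together with the fact that averaging commutes with the boundary map $\partial$. Because $q$ is a $\Sigma_\bullet$-equivariant $\partial$-quasi-isomorphism by Theorem~\ref{th-int-2}(1), and $(-)^{\Sigma_\bullet}$ is exact, the induced map on invariants satisfies $H_*\big(({\rm Sup}^\bullet)^{\Sigma_\bullet},\partial\big)=H_*({\rm Sup}^\bullet,\partial)^{\Sigma_\bullet}\xrightarrow{\ \cong\ }H_*({\rm Inf}^\bullet,\partial)^{\Sigma_\bullet}=H_*\big(({\rm Inf}^\bullet)^{\Sigma_\bullet},\partial\big)$, so $q$ remains a $\partial$-quasi-isomorphism after taking invariants. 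For part~(2), I would first show that $\mathcal{H}(M)$ is a $\Delta$-submanifold of ${\rm Conf}_\bullet(M)/\Sigma_\bullet$ if and only if $\vec{\mathcal{H}}(M)$ is a $\Delta$-submanifold of ${\rm Conf}_\bullet(M)$, again using that unordered faces are sorted ordered faces; the forward direction is a direct check and the reverse follows from the four identifications above. Then the three maps in Theorem~\ref{th-int-3} are identities if and only if the corresponding maps in Theorem~\ref{th-int-2} are identities on invariants, which by Theorem~\ref{th-int-2}(2) happens exactly when $\vec{\mathcal{H}}(M)$ is a $\Delta$-submanifold, i.e. exactly when $\mathcal{H}(M)$ is.

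The main obstacle I expect is the compatibility lemma asserting that forming the closure $\Delta(-)$ and the co-closure $\delta(-)$ commutes with the quotient by $\Sigma_\bullet$, equivalently that the sorted face maps on ${\rm Conf}_\bullet(M)/\Sigma_\bullet$ descend coherently from the ordered face maps. This is exactly where the hypothesis $M\hookrightarrow\mathbb{R}$ is indispensable: without a continuous total order the deletion operation cannot be made single-valued on unordered configurations, and the whole reduction collapses. Verifying that this descent is smooth and respects the $\Delta$-set identities $\partial_{n-1}^i\partial_n^j=\partial_{n-1}^{j-1}\partial_n^i$ for $i<j$ at the level of the sorted representatives, and that it matches the invariant forms produced by Theorem~\ref{th-int-1}, is the technical heart of the argument; once it is established, parts~(1) and~(2) follow formally from Main result~I by the exactness of $\Sigma_\bullet$-invariants over $\mathbb{R}$.
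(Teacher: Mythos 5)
Your proposal takes a genuinely different route from the paper, and it contains a genuine gap at exactly the point you yourself flag as the ``technical heart.'' The paper does not deduce Theorem~\ref{th-int-3} from Theorem~\ref{th-int-2} at all: it proves Theorem~\ref{th-main2} and Theorem~\ref{th-main1bb} by applying the abstract machinery of Section~\ref{sec3} (Lemma~\ref{le-1.1} and Lemma~\ref{le-sqc}) directly to $A_\bullet={\rm Conf}_\bullet(M)/\Sigma_\bullet$, which is a $\Delta$-manifold in its own right by Lemma~\ref{le-8.222} (this is where the continuous total order is used), with $B_\bullet=\mathcal{H}(M)$ as graded submanifold. No passage to $\Sigma_\bullet$-invariants is needed, and the quasi-isomorphism $q$ comes from the general Lemma~\ref{le-0.zam1}, not from transferring the ordered statement.

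The gap in your reduction is the claim that the unordered face maps are the descent of the ordered face maps through $\pi_\bullet$, and consequently that the averaging operator $\frac{1}{|\Sigma_n|}\sum_{g\in\Sigma_n}g^*$ commutes with $\partial$, so that $(-)^{\Sigma_\bullet}$ is an exact functor \emph{of chain complexes with respect to $\partial$}. Neither claim is true. The ordered face map $\partial_i^n$ deletes the $(i+1)$-th \emph{coordinate}, which is not a $\Sigma$-equivariant operation, so no single $\partial_i^n$ descends through the quotient; on a component whose sorting permutation is $\sigma$, coordinate deletion corresponds to deleting the $\sigma(i+1)$-th \emph{smallest} point, and in the alternating sum $\partial=\sum_i(-1)^i(\partial_i^n)^*$ the signs then fail to match. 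Concretely, take $M=\mathbb{R}$ and $f\in\Omega^0({\rm Conf}_1(\mathbb{R}))$: the ordered boundary is $(\partial f)(x_1,x_2)=f(x_2)-f(x_1)$, which is \emph{anti}-invariant under $\Sigma_2$, whereas the pullback along $\pi_2$ of the unordered boundary (defined via sorted deletion) is $f(\max(x_1,x_2))-f(\min(x_1,x_2))$, which is invariant. So the ordered $\partial$ does not preserve $\Sigma_\bullet$-invariant forms, $\pi_\bullet^*$ does not intertwine the two boundary operators, and your four identifications ${\rm Sup}^\bullet(\mathcal{H}(M))=({\rm Sup}^\bullet(\vec{\mathcal{H}}(M)))^{\Sigma_\bullet}$ etc.\ cannot hold as chain complexes with respect to $\partial$; hence the transfer of surjectivity and of the $\partial$-quasi-isomorphism in your parts~(1) and~(2) does not go through. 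Note that the paper is careful on this point: Theorem~\ref{pr-ab1} only asserts that the $\Sigma_n$-action commutes with $d$, never with $\partial$. A repair of your strategy would require replacing plain invariants by sign-twisted invariants (or inserting the sign of the sorting permutation component by component), which is substantially more work than the paper's direct argument and is not what your proposal describes.
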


  Theorem~\ref{th-int-2}  follows  directly  from  Theorem~\ref{th-main1}  and  Theorem~\ref{th-main1aa}.  
  Theorem~\ref{th-int-3}  follows  directly  from  Theorem~\ref{th-main2}  and  Theorem~\ref{th-main1bb}.

\subsection{Organization} 

 Section~\ref{sec-saa}  and  Section~\ref{sec3}  
 are  auxiliaries  for  later  sections.  
In  Section~\ref{sec-saa},  we  review the   infimum  and  the  supremum  chain  complexes. 
In  Section~\ref{sec3},  
we  give  some  double  complexes   for   $\Delta$-manifolds  and   their 
 graded  submanifolds.   
 We  prove  a  quasi-isomorphism concerning  the  boundary  map  induced  by  the  
 face  maps  of  the  $\Delta$-set  structure  from    
 the  supremum  double  complex to  the  infimum  double  complex.

 In  Section~\ref{sec-conf-hpg},  
 we  introduce   the  notions  of  configuration  spaces  and hypergraphs.  
 We  give  some  examples  for  hypergraphs  on  discrete sets,   circles  and  spheres.   
  In  Section~\ref{sec5},  we  construct  the  automorphism  groups  of  
  hypergraphs   and  prove  that  the  group  action  is   faithful.  
  We  calculate  some  examples  of  the  automorphism groups  
  for  hypergraphs  on  discrete sets,    circles  and  spheres.  
  In  Section~\ref{sec-10}, 
  we  prove   that  the  automorphism  group  of  a  hypergraph  
  is  a  subgroup  in  the  intersection  of  the  automorphism  
  groups  of  the  (lower-)associated  simplicial complexes  and  the  (lower-)associated  
  independence  hypergraphs. 
  In  Section~\ref{bundle},  we consider   the  associated vector  bundles  for  covering  maps  from   
   hyperdigraphs  to  the  underlying  hypergraphs.    
 We  estimate  the  order  of  these      vector  bundles.  
 In  Section~\ref{sec-regular},  we   use  hypergraphs  on  manifolds  
 to  give   obstructions  for  $k$-regular  embeddings.

 In   Section~\ref{sec6},  we construct  the  double  complexes  for  configuration  spaces 
 and  prove   Theorem~\ref{th-int-1}   with  the  help  of    Subsection~\ref{ssec-3.1}.  
In  Section~\ref{sec7},   we  construct  double  complexes  for  hypergraphs  on  $M$    
and  prove  Theorem~\ref{th-int-2}   and  Theorem~\ref{th-int-3}    with  the  help  of 
    Subsections~\ref{ss2.3a}
  and  \ref{ssec-3.3}.

\section{The  infimum  and  supremum  chain  complexes}\label{sec-saa}

In  this  section,  we  review the  infimum  as  well  as  the   supremum  chain  complexes   in  \cite{h1}  and  
  give    some   preliminaries   for  Subsection~\ref{ss2.3a}.

 Let $\mathbb{N}=0,1,2,\ldots$   be   the  natural  numbers.  
 Let  $C = (C_n,\delta_n)_{n\in  \mathbb{N}}$  be  a  chain  complex,  where  $C_n$  is  a  vector  space  and  $\delta_n:  C_n\longrightarrow  C_{n-1}$  is  a  linear  map  satisfying 
  $\delta_{n}\delta_{n+1}=0$  
for  each  $n\in  \mathbb{N}$.  
Let  $D =(D_n )_{n\in  \mathbb{N}}$  be  a  graded  subspace  of  $C$.  
The  {\rm  infimum  chain  complex}  is  a  sub-chain  complex of  $C$  given  by  
(cf.  \cite[Section~2,  Definition~1  and  (2.2)]{h1})
\begin{eqnarray}\label{eq-2.z.z.1}
{\rm  Inf}_n (D,C)= D_n\cap  \delta_n^{-1}  D_{n-1}
\end{eqnarray}
and  the  {\it  supremum  chain  complex}  is  a  sub-chain  complex of  $C$  given  by  
(cf.  \cite[Section~2,  Definition~2  and  (2.3)]{h1})
 \begin{eqnarray}\label{eq-2.z.z.2}
{\rm  Sup}_n (D,C)= D_n+  \delta_{n+1}   D_{n+1}.  
\end{eqnarray}
It  is  proved  in \cite[Proposition~2.4]{h1}  and  \cite[Proposition~2.3]{wulaoshi}  that  the  canonical  inclusions  
\begin{eqnarray}\label{eq-0.mpa}
\theta: {\rm  Inf}_\bullet (D,C)\longrightarrow  {\rm  Sup}_\bullet (D,C) 
\end{eqnarray}    
 is  a  quasi-isomorphism  of  chain  complexes.

   The  chain  complexes  and    chain  maps  in   
 (\ref{eq-0.mpa})  only  depend  on  $D$  and  
 do  not  depend  on  the  choice  of  the  ambient  chain  complex  $C$.   Precisely,    
 if  $C$  and  $C'$  are  two  chain  complexes  containing  $D$  as  a  graded  subspace  
 such  that  the   boundary  maps  
 $\partial $  of  $C$   and  $\partial'$  of   $C'$     coincide  on  $C\cap C'$,  
 then  ${\rm  Inf}_\bullet (D,C)={\rm  Inf}_\bullet (D,C')$  and    ${\rm  Sup}_\bullet (D,C)={\rm  Sup}_\bullet (D,C')$.  
 Thus  we  simply  write   ${\rm  Inf}_\bullet (D,C)$  as  ${\rm  Inf}_\bullet (D)$  
 and  write  ${\rm  Sup}_\bullet (D,C) $  as  ${\rm  Sup}_\bullet (D) $.  

\begin{lemma}\label{le-0.zam1}
The  chain  map  of  the  quotient  chain  complexes  
\begin{eqnarray}\label{eq-1.mpa}
q:  C/ {\rm  Sup}_\bullet (D)  \longrightarrow  C/ {\rm  Inf}_\bullet (D)
\end{eqnarray}
induced  by  
(\ref{eq-0.mpa})  is  a  surjective  quasi-isomorphism.       
\end{lemma}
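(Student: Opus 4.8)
The plan is to reduce the statement to one acyclicity computation fed into the homology long exact sequence. The starting point is the chain of $\delta$-stable inclusions ${\rm Inf}_\bullet(D)\subseteq {\rm Sup}_\bullet(D)\subseteq C$, which is immediate from $(\ref{eq-2.z.z.1})$ and $(\ref{eq-2.z.z.2})$ since ${\rm Inf}_n(D)\subseteq D_n\subseteq {\rm Sup}_n(D)$ for every $n$. Because both subcomplexes are carried into themselves by $\delta$, the quotients $C/{\rm Inf}_\bullet(D)$, $C/{\rm Sup}_\bullet(D)$ and ${\rm Sup}_\bullet(D)/{\rm Inf}_\bullet(D)$ are genuine chain complexes, and the surjection $q$ of $(\ref{eq-1.mpa})$ relating the two quotient complexes, induced by $(\ref{eq-0.mpa})$, is a chain map. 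Its surjectivity in each degree is formal, since one passes to the quotient by the smaller of two nested subspaces; hence the only real content is the quasi-isomorphism assertion.

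First I would show that the intermediate quotient ${\rm Sup}_\bullet(D)/{\rm Inf}_\bullet(D)$ is acyclic, and this is precisely the step where the input $(\ref{eq-0.mpa})$ is spent. Applying the homology long exact sequence to
\[
0\longrightarrow {\rm Inf}_\bullet(D)\xrightarrow{\ \theta\ }{\rm Sup}_\bullet(D)\longrightarrow {\rm Sup}_\bullet(D)/{\rm Inf}_\bullet(D)\longrightarrow 0,
\]
and using that $\theta$ induces an isomorphism on homology in every degree, exactness squeezes each $H_n\bigl({\rm Sup}_\bullet(D)/{\rm Inf}_\bullet(D)\bigr)$ to $0$: the two adjacent instances of $\theta_\ast$ being surjective and injective respectively force the incoming and outgoing maps at that spot to vanish.

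Then I would feed this acyclicity into the short exact sequence of chain complexes coming from the third isomorphism theorem applied degreewise,
\[
0\longrightarrow {\rm Sup}_\bullet(D)/{\rm Inf}_\bullet(D)\longrightarrow C/{\rm Inf}_\bullet(D)\xrightarrow{\ q\ } C/{\rm Sup}_\bullet(D)\longrightarrow 0,
\]
whose rightmost arrow is the chain map $q$. Since the kernel term is acyclic, its long exact sequence forces $q$ to induce isomorphisms $H_n(C/{\rm Inf}_\bullet(D))\xrightarrow{\cong} H_n(C/{\rm Sup}_\bullet(D))$ in all degrees, so $q$ is a quasi-isomorphism. Equivalently, I could exhibit the morphism of short exact sequences from $0\to {\rm Inf}_\bullet(D)\to C\to C/{\rm Inf}_\bullet(D)\to 0$ to $0\to {\rm Sup}_\bullet(D)\to C\to C/{\rm Sup}_\bullet(D)\to 0$ with vertical maps $\theta$, ${\rm id}_C$ and $q$, and invoke the five lemma on the induced ladder of long exact sequences: two of the three columns are quasi-isomorphisms, so the third is too.

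I do not expect a serious obstacle, as the argument is a standard diagram chase once $(\ref{eq-0.mpa})$ is granted. The only points deserving an explicit line are the verification that ${\rm Inf}_\bullet(D)$ and ${\rm Sup}_\bullet(D)$ are genuinely $\delta$-stable subcomplexes, so that all three sequences above are short exact sequences of chain complexes carrying the expected long exact sequences, and the bookkeeping identifying the surjection in the second display with the chain map $q$ of $(\ref{eq-1.mpa})$.
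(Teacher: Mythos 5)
Your proof is correct, but it follows a genuinely different route from the paper's. One orientation remark first: the map induced by $\theta$ naturally goes $C/{\rm Inf}_\bullet(D)\longrightarrow C/{\rm Sup}_\bullet(D)$, which is the direction you use; the display (\ref{eq-1.mpa}) has source and target interchanged, but the paper's own first proof line (injectivity of $\theta$ gives surjectivity of $q$) and the later application in (\ref{eq-mzvoaq}), where $q$ maps ${\rm Sup}^n(\Omega^\bullet(B_\bullet))=\Omega^\bullet(A_n)/{\rm Inf}_n(\Omega^\bullet(\epsilon_\bullet))$ onto ${\rm Inf}^n(\Omega^\bullet(B_\bullet))=\Omega^\bullet(A_n)/{\rm Sup}_n(\Omega^\bullet(\epsilon_\bullet))$, agree with your reading, so this is a typo in the statement rather than a defect in your proposal. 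As for the argument itself: the paper makes no use of long exact sequences. After the surjectivity observation, it establishes the quasi-isomorphism by a direct computation, writing $H_n(C/{\rm Sup}_\bullet(D))$ as a quotient of ${\rm  Ker}(\delta_n)/ {\rm  Ker}(\delta_n\mid _{D_n+\delta_{n+1}D_{n+1}})$ by ${\rm  Im} (\delta_{n+1})/  {\rm  Im} (\delta_{n+1}\mid_{D_{n+1} +\delta_{n+2}  D_{n+2}})$ and manipulating this expression step by step until it becomes the corresponding expression for $H_n(C/{\rm Inf}_\bullet(D))$; in particular, the paper never invokes the quasi-isomorphism property of (\ref{eq-0.mpa}) in this step, only the injectivity of $\theta$. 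You instead spend (\ref{eq-0.mpa}) as a black box: the long exact sequence of $0\to{\rm Inf}_\bullet(D)\to{\rm Sup}_\bullet(D)\to{\rm Sup}_\bullet(D)/{\rm Inf}_\bullet(D)\to 0$ forces the middle quotient to be acyclic, and then either the long exact sequence of $0\to{\rm Sup}_\bullet(D)/{\rm Inf}_\bullet(D)\to C/{\rm Inf}_\bullet(D)\xrightarrow{q} C/{\rm Sup}_\bullet(D)\to 0$ or the five lemma applied to your ladder concludes. Both proofs are valid. Yours is shorter, works verbatim in any abelian category, makes explicit exactly where the hypothesis on $\theta$ is consumed, and sidesteps the quotient-of-quotients identifications that the paper's chain of equalities treats only loosely; the paper's computation, in exchange, is self-contained (it in effect reproves the homological content of (\ref{eq-0.mpa}) rather than quoting it) and yields explicit descriptions of the homology groups of both quotient complexes.
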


\begin{proof}
Since  (\ref{eq-0.mpa})  is  injective,  its  induced  chain  map  (\ref{eq-1.mpa})  
is  surjective.  
The  induced  homomorphism  of  homology groups  
\begin{eqnarray*} 
q_*:  H_n( C/ {\rm  Sup}_\bullet (D))  \longrightarrow   H_n(C/ {\rm  Inf}_\bullet (D))
\end{eqnarray*}
is  an  isomorphism  
\begin{eqnarray*}
 H_n( C/ {\rm  Sup}_\bullet (D))  &=&  
 \frac{{\rm  Ker}(\delta_n)/ {\rm  Ker}(\delta_n\mid _{D_n+\delta_{n+1}D_{n+1}})}
 {{\rm  Im} (\delta_{n+1})/  {{\rm  Im} (\delta_{n+1}\mid_{D_{n+1} +\delta_{n+2}  D_{n+2}})}}\\
 &=&\frac{{\rm  Ker}(\delta_n)/   ( \delta_{n+1}D_{n+1}+{\rm  Ker}(\delta_n) \cap  D_n)}
 {{\rm  Im} (\delta_{n+1})/  \delta_{n+1}( D_{n+1} )}\\
 &\cong&\frac{{\rm  Ker}(\delta_n)/   ( {\rm  Ker}(\delta_n) \cap  D_n)}
 {{\rm  Im} (\delta_{n+1})/ ( {\rm  Ker}(\delta_n) \cap  D_n \cap \delta_{n+1}( D_{n+1} ))}\\
 &=&\frac{{\rm  Ker}(\delta_n)/     {\rm  Ker}(\delta_n\mid_{D_n\cap  \delta_n^{-1}  D_{n-1}}) }
 {{\rm  Im} (\delta_{n+1})/   {\rm  Im}(\delta_{n+1}\mid_{D_{n+1}\cap  \delta_{n+1}^{-1}  D_{n }})  }\\
 &=&  H_n( C/ {\rm  Inf}_\bullet (D))   
\end{eqnarray*}
for  each  $n\in \mathbb{N}$.  
Thus  (\ref{eq-1.mpa})    is  a  quasi-isomorphism.  
\end{proof}


\section{$\Delta$-manifolds  and  double  complexes}\label{sec3}

In  this  section,   we  introduce  the  definitions  of  
$\Delta$-manifolds,  graded  submanifolds  of  $\Delta$-manifolds,  
and  associated  $\Delta$-manifolds  for   graded  submanifolds  of  $\Delta$-manifolds. 
 We  construct  a  double  complex  for  a  $\Delta$-manifold,  
an  infimum  double  complex  as  well  as  a  supremum  double  complex  for  a  graded  
submanifold   of  a  $\Delta$-manifold,  and    a  double  complex 
for  the   (lower-)associated  $\Delta$-manifold.  
  This  section  gives  auxiliary  lemmas  for  the  proofs  in  Section~\ref{sec6}
   and  Section~\ref{sec7}.  

\subsection{Double  complex  for  $\Delta$-manifolds}\label{ssec-3.1}
\begin{definition}\label{def-7.1}
\begin{enumerate}[(1)]
\item
A  $\Delta$-set    is  a  sequence  of  sets  $S_\bullet=(S_n)_{n\in\mathbb{N}}$  with  face  maps 
$\partial_i^n:  S_n\longrightarrow  S_{n-1}$,  $i=0,1,\ldots,n$,  which  satisfy the $\Delta$-identity
\begin{eqnarray*}
\partial^{n-1}_i \partial^n_j= \partial ^{n-1}_{j-1} \partial  ^n_i 
\end{eqnarray*}
  for  any   $i,j\in\mathbb{N}$  such  that     $0\leq  i<j\leq  n$;    
 \item
A  {\it  $\Delta$-manifold}     is  a $\Delta$-set  $A_\bullet= (A_n)_{n\in\mathbb{N}}$
 where  for  every  $n\in  \mathbb{N}$,  $A_n$  is  a  smooth  manifold  and  all the  face  maps 
 $\partial_n^i:  A_n\longrightarrow  A_{n-1}$   are  smooth  maps;
 \item
 A  {\it  $\Delta$-submanifold}     of  a   {\it  $\Delta$-manifold}   $A_\bullet$   is  
 a  graded  subset   $A'_\bullet= (A'_n)_{n\in\mathbb{N}}$  of  $A_\bullet$  such  that  
 (i)  $\partial^n_i(A'_n)\subseteq  A'_n$ for  any    $n\in \mathbb{N}$  and  any   $0\leq  i \leq  n$, 
 and  
  (ii) $A'_n$  is  a  smooth  submanifold   of  $A_n$ for  any  $n\in  \mathbb{N}$.  
 \end{enumerate}
\end{definition} 

\begin{remark}
The  definition  of  $\Delta$-manifolds  in  Definition~\ref{def-7.1}  does  not  require  the  existence  of  
degeneracies  hence  it  is  weaker  than  the  definition  of  simplicial  manifolds  (cf.   \cite[Section~1]{simmfd}). 
\end{remark}

Let  $\Omega^k(A_n)$  be  the  space  of  differential  $k$-forms  on  $A_n$.  
Let  $d:  \Omega^k(A_n)\longrightarrow \Omega^{k+1}(A_n)$  
be  the  usual  exterior  derivative  of  differential   forms on   $A_n$.   Let  $\partial:   \Omega^k(A_{n-1})\longrightarrow  
 \Omega^k(A_{n})$,   where   $\partial= \sum_{i=0}^n (-1)^i (\partial_i^n)^*$,   be  the  alternative  sum 
of  the  induced  pull-back  maps  $(\partial_i^n)^*:  \Omega^{\bullet}(A_{n -1})\longrightarrow   \Omega^{\bullet}(A_{n})$   of  the face  maps  $\partial_i^n:  A_n\longrightarrow  A_{n-1}$  of  $A_\bullet $.  

\begin{lemma}\label{le-2.1}
For  any  $\Delta$-manifold  $A_\bullet$,  we  have  
$d^2=0$,  $\partial^2=0$  and  $\partial d =   d \partial$.  
Thus  $(\Omega^\bullet(A_\bullet),  d,\partial)$  is  a  double  complex. 
\end{lemma}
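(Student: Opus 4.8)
The plan is to verify the three defining relations of a double complex directly, since each follows from standard facts about differential forms combined with the $\Delta$-identity. First I would dispose of $d^2=0$: this is simply the classical identity that the exterior derivative squares to zero on each smooth manifold $A_n$ separately, so it requires no input beyond the standard theory of differential forms. Next I would treat the mixed relation $\partial d = d\partial$. Here the key fact is that pullback along a smooth map commutes with the exterior derivative, i.e. $(\partial_i^n)^* d = d\,(\partial_i^n)^*$ for each face map. Since $\partial = \sum_{i=0}^n (-1)^i (\partial_i^n)^*$ is just a signed sum of such pullbacks, commutativity with $d$ is inherited termwise, and the signs play no role in this step.

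The main work is $\partial^2 = 0$, and this is where the $\Delta$-identity enters. I would expand
\begin{eqnarray*}
\partial^2 = \Big(\sum_{j=0}^{n}(-1)^j (\partial_j^{n})^*\Big)\Big(\sum_{i=0}^{n-1}(-1)^i (\partial_i^{n-1})^*\Big)
= \sum_{i,j}(-1)^{i+j}\,(\partial_i^{n-1})^* (\partial_j^{n})^*,
\end{eqnarray*}
being careful that pullback is contravariant, so the composition $\Omega^k(A_{n-2})\to\Omega^k(A_{n-1})\to\Omega^k(A_n)$ corresponds to pulling back along $\partial_i^{n-1}$ first and then $\partial_j^n$, which realizes the face composition $\partial_i^{n-1}\partial_j^n: A_n \to A_{n-2}$. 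The plan is then to split the double sum according to whether $i<j$ or $i\geq j$, apply the $\Delta$-identity $\partial_i^{n-1}\partial_j^n = \partial_{j-1}^{n-1}\partial_i^n$ (valid for $i<j$) to reindex one half of the sum, and observe that the two halves cancel in pairs because the corresponding signs are opposite. This is the familiar simplicial argument that $\partial^2=0$, transported to the level of pullbacks of forms.

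I expect the only genuine subtlety, rather than a true obstacle, to be bookkeeping: tracking the contravariance of pullback so that the face relations compose in the correct order, and verifying that the reindexing in the $\Delta$-identity step produces exactly the sign flip needed for cancellation. Once these sign and index computations are carried out, all three relations hold, and since $d$ raises the form-degree while $\partial$ raises the simplicial degree, the bigraded object $\Omega^\bullet(A_\bullet)$ with the anticommuting (indeed, here commuting) differentials $d$ and $\partial$ is by definition a double complex, completing the proof of Lemma~\ref{le-2.1}.
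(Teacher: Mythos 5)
Your proposal is correct and follows essentially the same route as the paper: $d^2=0$ from the standard theory, $\partial d = d\partial$ from termwise commutation of pullback with $d$, and $\partial^2=0$ by expanding the double sum of pullbacks, splitting by $i<j$ versus $i\geq j$, and cancelling via the $\Delta$-identity after reindexing. The only blemish is notational — in your displayed summand the operators $(\partial_i^{n-1})^*(\partial_j^n)^*$ are written in the wrong order for operator composition — but your prose immediately resolves the contravariance correctly, so this is a slip, not a gap.
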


   \begin{proof}
Since  $d$  is  the   usual  exterior  derivative  of  differentiable  forms,  it  satisfies $d^2=0$.  
By  the  $\Delta$-identity  of  the  face  maps, 
\begin{eqnarray*}
\partial^2&=& \Big(\sum_{i=0}^{n+1}   (-1)^i (\partial _i^{n+1})^* \Big) \circ\Big(\sum_{j=0}^n   (-1)^j (\partial _j^n)^* \Big)\\
&=&\sum_{i=0}^{n+1}\sum_{j=0}^{i-1}  (-1)^{i+j}  ( \partial _j^n\partial _i^{n+1})^* +\sum_{i=0}^{p}\sum_{j=i}^n  (-1)^{i+j} ( \partial _j^n\partial _i^{n+1})^* \\
&=&  \sum_{i=0}^{n+1}\sum_{j=0}^{i-1}  (-1)^{i+j}  ( \partial _{i-1}^n\partial _j^{n+1})^*+\sum_{i=0}^{n}\sum_{j=i}^n  (-1)^{i+j} ( \partial _j^n\partial _i^{n+1})^* \\
&=&  \sum_{j=0}^{n}\sum_{i-1=j}^{n}  (-1)^{(i-1)+j+1}  ( \partial _{i-1}^n\partial _j^{n+1})^* +
\sum_{i=0}^{n}\sum_{j=i}^n  (-1)^{i+j} ( \partial _j^n\partial _i^{n+1})^*\\
&=&0.  
\end{eqnarray*}
Since  $(\partial _i^n)^*$  is  the  pull-back  of  differential   forms,  we  have  $(\partial _i^n)^* d=d (\partial _i^n)^*$,  which  implies  
\begin{eqnarray*}
\partial  d 
=  \sum_{i=0}^n (-1)^i (\partial _i^n)^*   d  
=  \sum_{i=0}^n (-1)^i  d(\partial _i^n)^*    
=  d \partial.
\end{eqnarray*} 
Consequently,  $(\Omega^\bullet(A_\bullet),  d,\partial)$  is  a  double  complex.   
\end{proof}

\begin{lemma}\label{le-6-987}
For  any  $\Delta$-manifold  $A_\bullet$  and  any    $\Delta$-submanifold  $A'_\bullet$  of  $A_\bullet$,  
the  canonical  inclusion  $\iota:  A'_\bullet\longrightarrow  A_\bullet$  induces  
a  surjective  chain  map     
\begin{eqnarray}\label{eq-7.mza}
\iota^\#:  (\Omega^\bullet(A_\bullet),  d,\partial)\longrightarrow  (\Omega^\bullet(A'_\bullet),  d,\partial)
\end{eqnarray}
of  double  complexes.  Consequently,  
\begin{eqnarray}\label{eq-7.zvab}
(\Omega^\bullet(A'_\bullet),  d,\partial)\cong   (\Omega^\bullet(A_\bullet),  d,\partial) /  {\rm  Ker}(\iota^\#)
\end{eqnarray}
is  a  quotient  double  complex of  $(\Omega^\bullet(A_\bullet),  d,\partial)$  modulo  the  kernel  of (\ref{eq-7.mza}).  
\end{lemma}

\begin{proof}
The  canonical  inclusion    $\iota:  A'_\bullet\longrightarrow  A_\bullet$
induces    pull-back  maps  of  differential  forms  
\begin{eqnarray}\label{eq-7.0va}
\iota^\#:  \Omega^\bullet(A_\bullet)\longrightarrow \Omega^\bullet(A'_\bullet)
\end{eqnarray}
by    restricting   the  differential  forms  on $A_\bullet$  to  $A'_\bullet$.    
Hence  (\ref{eq-7.0va})  is  surjective  and  $d \iota^\#= \iota^\#  d$.   
 By  Definition~\ref{def-7.1}~(3),  we  have  
  $(\partial^n_i )^*\iota^\#= \iota^\#   (\partial^n_i)^*$   for  any    $n\in \mathbb{N}$  and  any   $0\leq  i \leq  n$.  
  Hence     $\partial \iota^\#= \iota^\#  \partial$.     
  Therefore,  (\ref{eq-7.0va})  gives  a    surjective  chain  map  (\ref{eq-7.mza}) of  double  
  complexes.    
\end{proof}

\subsection{Double  complexes  for  graded  submanifolds  of  $\Delta$-manifolds}
\label{ss2.3a}

\begin{definition}\label{def-2.3mko}
Let  $A_\bullet=(A_n)_{n\in \mathbb{N}} $  be  a  $\Delta$-manifold.  
A  {\it  graded  submanifold}   $B_\bullet=(B_n)_{n\in \mathbb{N}} $   of  $A_\bullet$  is  
a  sequence  of   manifolds  such  that  $B_n$  is  a  submanifold   of  $A_n$  with  smooth   embeddings   $\epsilon_n:  B_n\longrightarrow  A_n$  for  each  $n\in \mathbb{N}$.  
\end{definition}

Let   $B_\bullet$  be  a  graded  submanifold  of  a  $\Delta$-manifold  $A_\bullet$ 
with  smooth  embeddings  $\epsilon_\bullet$.  This  induces  a   pull-back  of  
differential  forms  
\begin{eqnarray*}
 \epsilon_n ^\#:  \Omega^\bullet (A_n)\longrightarrow   \Omega^\bullet (B_n)
\end{eqnarray*}
    restricting  a  differential  form  on  $A_n$  to  $B_n$.  
We  have a  graded  subspace  
$
 ({\rm  Ker}  \epsilon_n ^\#) _{n\in  \mathbb{N}}
$,  
which  consists  of  all  the  differential  forms  on  $A_\bullet$  that  vanish  when
  restricted  to  $B_\bullet$,  
of  the   chain  complex  
$(\Omega^\bullet(A_n),\partial)_{n\in\mathbb{N}}$  
with   boundary  map  $\partial:  \Omega^\bullet(A_{n-1})\longrightarrow  \Omega^\bullet(A_n)$.  
By  (\ref{eq-2.z.z.1})  and  (\ref{eq-2.z.z.2})  respectively,  we  define    
the  infimum  double  complex  of  $\epsilon_\bullet$   as 
\begin{eqnarray*}
{\rm  Inf}_n(\Omega^\bullet(\epsilon_\bullet)) &=& {\rm  Inf}_n( {\rm  Ker}  \epsilon_\bullet ^\#,  \Omega^\bullet(A_\bullet))  \nonumber \\
&=&  {\rm  Ker}  \epsilon_n ^\#   \cap  \partial^{-1} {\rm  Ker}  \epsilon_{n+1}  ^\#  
\label{eq-lzqp1}
\end{eqnarray*}
and  define  the  supremum  double  complex of  $\epsilon_\bullet$   as  
\begin{eqnarray*}
{\rm  Sup}_n(\Omega^\bullet(\epsilon_\bullet)) &=& {\rm  Sup}_n( {\rm  Ker} \epsilon_\bullet^\#,  \Omega^\bullet(A_\bullet))  \nonumber\\
&=&  {\rm  Ker}  \epsilon_n ^\#  +  \partial  {\rm  Ker} \epsilon_{n-1}^\#.   
\label{eq-lzqp2}
\end{eqnarray*}
Then      
\begin{eqnarray*}
({\rm  Inf}_n(\Omega^\bullet(\epsilon_\bullet)),\partial)_{n\in \mathbb{N}}, ~~~~~~
({\rm  Sup}_n(\Omega^\bullet(\epsilon_\bullet)),\partial)_{n\in \mathbb{N}} 
\end{eqnarray*}  
are  sub-chain  complexes  of  $(\Omega^\bullet(A_n),\partial)_{n\in\mathbb{N}}$  such  that  
\begin{enumerate}[(1)]
\item
as  vector  spaces,     we  have  inclusions 
\begin{eqnarray*}
{\rm  Inf}_n(\Omega^\bullet(\epsilon_\bullet)) \subseteq     {\rm  Ker}  \epsilon_n ^\# \subseteq {\rm  Sup}_n (\Omega^\bullet(\epsilon_\bullet))\subseteq  \Omega^\bullet(A_n); 
\end{eqnarray*}
 \item
   by  (\ref{eq-0.mpa}),     
the  canonical  inclusion  
\begin{eqnarray}\label{eq-kk.1}
\theta:  {\rm  Inf}_n(\Omega^\bullet(\epsilon_\bullet)) \longrightarrow  {\rm  Sup}_n (\Omega^\bullet(\epsilon_\bullet))
\end{eqnarray}
is  a  quasi-isomorphism  of  chain  complexes.  
\end{enumerate}
Define  the  infimum  double  complex  of  $B_\bullet$  as 
\begin{eqnarray}\label{eq-mm1n}
{\rm  Inf}^n(\Omega^\bullet(B_\bullet))= \Omega^\bullet(A_n) /  {\rm  Sup}_n(\Omega^\bullet(\epsilon_\bullet))
\end{eqnarray}
and  define   the  supremum  double  complex  of  $B_\bullet$  as 
\begin{eqnarray}\label{eq-mm2n}
{\rm  Sup}^n(\Omega^\bullet(B_\bullet))= \Omega^\bullet(A_n)/  {\rm  Inf}_n(\Omega^\bullet(\epsilon_\bullet)).  
\end{eqnarray}
Then     
\begin{eqnarray*}
({\rm  Inf}^n(\Omega^\bullet(B_\bullet)),  \partial)_{_{n\in\mathbb{N}}},
~~~~~~~
({\rm  Sup}^n(\Omega^\bullet(B_\bullet)),  \partial)_{_{n\in\mathbb{N}}}
\end{eqnarray*}
 are  quotient  chain  complexes  of   $(\Omega^\bullet(A_n),\partial)_{n\in\mathbb{N}}$ 
 such  that  
 \begin{enumerate}[(1)]
\item
     we  have  quotient  linear maps  of  vector  spaces   
\begin{eqnarray}\label{eq-mkaoq1}
\Omega^\bullet(A_n)\longrightarrow  {\rm  Sup}^n(\Omega^\bullet(B_\bullet))
\longrightarrow  \Omega^n(B_\bullet)\longrightarrow 
{\rm  Inf}^n(\Omega^\bullet(B_\bullet)); 
\end{eqnarray}
 \item
   by  (\ref{eq-1.mpa}),      
the  canonical  quotient  linear  map    
\begin{eqnarray}\label{eq-mzvoaq}
q:   {\rm  Sup}^n(\Omega^\bullet(B_\bullet))
\longrightarrow   
{\rm  Inf}^n (\Omega^\bullet(B_\bullet))
\end{eqnarray}
is  a  quasi-isomorphism  of  chain  complexes  with  respect to $\partial$.  
\end{enumerate}

\begin{lemma}\label{le-hqlca7}
The  spaces  ${\rm  Inf}^n(\Omega^\bullet(B_\bullet))$  and  ${\rm  Sup}^n(\Omega^\bullet(B_\bullet))$  
as  well  as  the  linear  map  $q$  in  (\ref{eq-mzvoaq})  do  not  depend  on  the  choice  of  the  ambient  $\Delta$-manifold  $A_\bullet$.  
\end{lemma}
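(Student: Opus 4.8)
The plan is to compare an arbitrary ambient with a fixed large one. In every case of interest the ambient $\Delta$-manifold $A_\bullet$ sits as a $\Delta$-submanifold of a universal $\Delta$-manifold $\tilde A_\bullet$ (such as ${\rm Conf}_\bullet(M)$ or ${\rm Conf}_\bullet(M)/\Sigma_\bullet$) whose face maps restrict to those of $A_\bullet$; since any two admissible ambients share such a common overmanifold, it suffices to build, for each $A_\bullet$, a canonical isomorphism between the objects ${\rm Inf}^\bullet$, ${\rm Sup}^\bullet$, $q$ computed from $A_\bullet$ and those computed from $\tilde A_\bullet$. The only tool I would use is the restriction map $\rho_n \colon \Omega^\bullet(\tilde A_n)\to\Omega^\bullet(A_n)$ coming from $A_n\hookrightarrow\tilde A_n$. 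This is the quotient-side analogue of the ambient-independence remark following (\ref{eq-0.mpa}), which concerns the subcomplexes rather than the quotients.

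First I would record the elementary properties of $\rho_\bullet$: it is surjective (forms on an embedded submanifold extend, the same fact that makes $\epsilon_n^\#$ surjective), it commutes with $d$, and $\rho_{n+1}\partial=\partial\rho_n$ because the face maps of $A_\bullet$ are restrictions of those of $\tilde A_\bullet$. Writing $D_n={\rm Ker}\,\epsilon_n^\#$ and decorating the universal objects with a tilde, one has $\ker\rho_n\subseteq\tilde D_n$, since a form vanishing on $A_n$ vanishes on $B_n\subseteq A_n$.

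The crux is the pair of identities $\rho_n^{-1}({\rm Inf}_n)=\widetilde{{\rm Inf}}_n$ and $\rho_n^{-1}({\rm Sup}_n)=\widetilde{{\rm Sup}}_n$. For the infimum I would note that $\omega\in{\rm Inf}_n$ is characterised by the two pointwise conditions $\omega|_{B_n}=0$ and $(\partial\omega)|_{B_{n+1}}=0$, and that for a form $\eta$ on $\tilde A_n$ each of these depends only on $\rho_n\eta$: the first because $B_n\subseteq A_n$, and the second because $\Delta$-closedness forces $\partial_i^{n+1}(B_{n+1})\subseteq A_n$, so that $(\partial\eta)|_{B_{n+1}}$ is computed from $\eta|_{A_n}=\rho_n\eta$ alone. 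Hence $\eta\in\widetilde{{\rm Inf}}_n$ iff $\rho_n\eta\in{\rm Inf}_n$. For the supremum, where ${\rm Sup}_n=D_n+\partial D_{n-1}$, I would instead lift a decomposition $\rho_n\eta=\alpha+\partial\beta$ (with $\alpha\in D_n$, $\beta\in D_{n-1}$) through $\rho$, using surjectivity and $\rho_n\partial=\partial\rho_{n-1}$, to see that $\eta$ lies in $\widetilde{{\rm Sup}}_n$ modulo $\ker\rho_n\subseteq\tilde D_n\subseteq\widetilde{{\rm Sup}}_n$.

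The conclusion is then formal: $\rho_n$ is surjective with the stated preimages, so it descends to isomorphisms $\Omega^\bullet(\tilde A_n)/\widetilde{{\rm Inf}}_n\xrightarrow{\sim}\Omega^\bullet(A_n)/{\rm Inf}_n$ and $\Omega^\bullet(\tilde A_n)/\widetilde{{\rm Sup}}_n\xrightarrow{\sim}\Omega^\bullet(A_n)/{\rm Sup}_n$, that is, isomorphisms ${\rm Sup}^\bullet$ and ${\rm Inf}^\bullet$ independent of the ambient; being induced by the single family $\rho_\bullet$ they respect $d$ and $\partial$ and commute with the projection $q$ of (\ref{eq-mzvoaq}), giving independence of $q$ as well. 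I expect the main obstacle to be the supremum preimage identity: one must pin down exactly where $\Delta$-closedness enters (it is what guarantees $\partial_i(B_{n+1})\subseteq A_n$ and thereby erases any dependence on the chosen extension of forms off $A_\bullet$), and one must justify the surjectivity and extendability of $\rho_\bullet$, which is where embeddedness of $B_\bullet\subseteq A_\bullet\subseteq\tilde A_\bullet$ is used.
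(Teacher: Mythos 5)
Your core comparison is sound, and it actually supplies the details that the paper's own proof leaves implicit: for a nested pair of ambients $A_\bullet\subseteq \tilde A_\bullet$ with $A_\bullet$ a $\Delta$-submanifold of $\tilde A_\bullet$, the restriction $\rho_\bullet$ is surjective, commutes with $d$ and $\partial$, and satisfies $\rho_n^{-1}({\rm Inf}_n(\Omega^\bullet(\epsilon_\bullet)))=\widetilde{{\rm Inf}}_n$ and $\rho_n^{-1}({\rm Sup}_n(\Omega^\bullet(\epsilon_\bullet)))=\widetilde{{\rm Sup}}_n$. Your proofs of these two preimage identities are correct (the first because the face maps of $\tilde A_\bullet$ carry $B_{n+1}\subseteq A_{n+1}$ into $A_n$, the second by lifting a decomposition $\alpha+\partial\beta$ and using $\ker\rho_n\subseteq {\rm Ker}\,\tilde\epsilon_n^\#$), and they do yield isomorphisms of the quotient double complexes compatible with the map $q$ of (\ref{eq-mzvoaq}).

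The gap is the reduction step. The lemma quantifies over \emph{all} ambient $\Delta$-manifolds containing $B_\bullet$ (under the standing convention, as in Section~\ref{sec-saa}, that two admissible ambients have face maps coinciding on their intersection), and you dispose of this by asserting that any two admissible ambients sit as $\Delta$-submanifolds of a common overmanifold $\tilde A_\bullet$. That is not justified, and it fails for abstract $\Delta$-manifolds: the natural candidate, the union $A_\bullet\cup A'_\bullet$ glued along $A_\bullet\cap A'_\bullet$, need not be a smooth manifold, and a universal object such as ${\rm Conf}_\bullet(M)$ is only available in the applications of Sections~\ref{sec6} and \ref{sec7}, not at the level of generality of the lemma. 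The paper's proof runs the same restriction-map comparison but in the opposite direction: it passes \emph{down} to the intersection $A_\bullet\cap A'_\bullet$, which under the coinciding-face-maps convention is automatically a $\Delta$-manifold containing $B_\bullet$ as a graded submanifold, and compares each of $\Omega^\bullet(A_n)$, $\Omega^\bullet(A'_n)$ with $\Omega^\bullet(A_n\cap A'_n)$, exactly the chain of surjections in (\ref{eq-mkaoq1}). Your argument is repaired with your own tools: apply your nested-pair comparison (whose preimage identities hold verbatim for any nested pair of ambients) to the inclusions $A_\bullet\cap A'_\bullet\subseteq A_\bullet$ and $A_\bullet\cap A'_\bullet\subseteq A'_\bullet$, rather than to a hypothetical common overmanifold.
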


\begin{proof}
Let  $A_\bullet$  and  $A'_\bullet$  be  $\Delta$-manifolds  such that  their  face  maps  
coincide   on  the  intersection  $A_\bullet  \cap    A' _\bullet$. 
Then   $A_\bullet  \cap    A' _\bullet$   is   also  a  $\Delta$-manifold.  
Let  $B_\bullet$  be  a  graded  submanifold  of  $A_\bullet$  as  well as  $A'_\bullet$.  
Then  $B_\bullet$  is  also  a  graded  submanifold   of   $A_\bullet  \cap    A' _\bullet$.  
For  each  $n\in  \mathbb{N}$,  we  have  surjective  linear  maps  
\begin{eqnarray*}
\xymatrix{
\Omega^\bullet(A_n)\ar[rd] &&&&\\
&\Omega^\bullet(A_n\cap A'_n)\ar[r]& {\rm  Sup}_n(\Omega^\bullet(B_\bullet))\ar[r]  
&  \Omega^\bullet(B_n)  \ar[r]  &  {\rm  Inf}_n(\Omega^\bullet(B_\bullet)). \\
\Omega^\bullet(A'_n)\ar[ru] &&&&
}
\end{eqnarray*}
Consequently,   the  later  three  vector  spaces  and   the   later two  linear  maps  in  (\ref{eq-mkaoq1})   do  not  depend  on  the  choices  of  $A_\bullet$  or  $A'_\bullet$.  
\end{proof}

\begin{remark}
Nevertheless,  the  spaces  $ {\rm  Inf}_n(\Omega^\bullet(\epsilon_\bullet)) $ 
and  $  {\rm  Sup}_n (\Omega^\bullet(\epsilon_\bullet))$
as  well  as   the  map  $\theta$  in  (\ref{eq-kk.1})  
   depend  on  the  choice  of  $A_\bullet$.   The  independence  from   $A_\bullet$ 
comes  after   taking  the  quotient  spaces  in  (\ref{eq-mm1n})  and  (\ref{eq-mm2n}).     
\end{remark}

\begin{lemma}\label{le-1.1}
Let  $A_\bullet$  be  a  $\Delta$-manifold  and    
let  $B_\bullet$  be  a  graded  submanifold  of  $A_\bullet$ 
 with  embeddings  $\epsilon_\bullet$.   
Then   
  we  have 
  \begin{enumerate}[(1)]
  \item
   sub-double  complexes  
  \begin{eqnarray}\label{eq-lzpaqgj}
 ( {\rm  Inf}_\bullet(\Omega^\bullet(\epsilon_\bullet)),  d,   \partial ), ~~~~~~   ( {\rm  Sup}_\bullet(\Omega^\bullet(\epsilon_\bullet)),  d,   \partial )
  \end{eqnarray}
  of   $(\Omega^\bullet(A_\bullet),  d,\partial)$    such  that  the  canonical  inclusion  
 (\ref{eq-kk.1})  is  a  quasi-isomorphism  with respect  to  $\partial$;    
\item
 quotient  double  complexes  
   \begin{eqnarray}\label{eq-oabnbg}
 ( {\rm  Inf}^\bullet(\Omega^\bullet(B_\bullet) ),  d,   \partial ), ~~~~~~   ( {\rm  Sup}^\bullet(\Omega^\bullet(B_\bullet) ),  d,   \partial )
  \end{eqnarray}
of   $(\Omega^\bullet(A_\bullet),  d,\partial)$     such  that  the  canonical  quotient map  
(\ref{eq-mzvoaq})  is  a  quasi-isomorphism  with respect  to $\partial$.  
\end{enumerate}
\end{lemma}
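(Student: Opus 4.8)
The plan is to observe that the two quasi-isomorphism assertions are already in hand and that the only genuinely new content is compatibility with the second differential $d$. Indeed, with respect to the boundary map $\partial$, the inclusion $\theta$ in (\ref{eq-kk.1}) is a quasi-isomorphism by (\ref{eq-0.mpa}) and the quotient map $q$ in (\ref{eq-mzvoaq}) is a quasi-isomorphism by (\ref{eq-1.mpa}), both applied to the graded subspace $({\rm Ker}\,\epsilon_\bullet^\#)_{n\in\mathbb{N}}$ of the $\partial$-chain complex $(\Omega^\bullet(A_n),\partial)_{n\in\mathbb{N}}$. Likewise, the facts that ${\rm Inf}_\bullet(\Omega^\bullet(\epsilon_\bullet))$ and ${\rm Sup}_\bullet(\Omega^\bullet(\epsilon_\bullet))$ are sub-$\partial$-chain-complexes, and that ${\rm Inf}^\bullet(\Omega^\bullet(B_\bullet))$ and ${\rm Sup}^\bullet(\Omega^\bullet(B_\bullet))$ are quotient $\partial$-chain-complexes, are exactly what is recalled in (\ref{eq-2.z.z.1})--(\ref{eq-mm2n}). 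So the remaining task is to verify that all four families are additionally stable under the exterior derivative $d$, which upgrades them from sub/quotient chain complexes in the $\partial$-direction to sub/quotient double complexes of $(\Omega^\bullet(A_\bullet),d,\partial)$.

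The key step is that each ${\rm Ker}\,\epsilon_n^\#$ is $d$-stable. Since pull-back commutes with the exterior derivative, $\epsilon_n^\# d=d\,\epsilon_n^\#$; hence $\epsilon_n^\#\omega=0$ forces $\epsilon_n^\#(d\omega)=d(\epsilon_n^\#\omega)=0$, so $d\omega\in{\rm Ker}\,\epsilon_n^\#$. Combining this with the relation $d\partial=\partial d$ of Lemma~\ref{le-2.1}, I would check $d$-stability for the infimum and supremum families directly. For $\omega\in{\rm Inf}_n(\Omega^\bullet(\epsilon_\bullet))={\rm Ker}\,\epsilon_n^\#\cap\partial^{-1}{\rm Ker}\,\epsilon_{n+1}^\#$ one has $d\omega\in{\rm Ker}\,\epsilon_n^\#$ and $\partial(d\omega)=d(\partial\omega)\in{\rm Ker}\,\epsilon_{n+1}^\#$, so $d\omega$ again lies in ${\rm Inf}_n$. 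For $\omega=\alpha+\partial\beta\in{\rm Sup}_n(\Omega^\bullet(\epsilon_\bullet))$ with $\alpha\in{\rm Ker}\,\epsilon_n^\#$ and $\beta\in{\rm Ker}\,\epsilon_{n-1}^\#$ one has $d\omega=d\alpha+\partial(d\beta)$ with $d\alpha\in{\rm Ker}\,\epsilon_n^\#$ and $d\beta\in{\rm Ker}\,\epsilon_{n-1}^\#$, so $d\omega\in{\rm Sup}_n$. Thus both families in (\ref{eq-lzpaqgj}) are closed under $d$ and $\partial$, which together with the already-cited quasi-isomorphism (\ref{eq-kk.1}) proves (1).

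For (2), since ${\rm Inf}_n(\Omega^\bullet(\epsilon_\bullet))$ and ${\rm Sup}_n(\Omega^\bullet(\epsilon_\bullet))$ are sub-double complexes of $(\Omega^\bullet(A_\bullet),d,\partial)$, the quotients in (\ref{eq-mm1n}) and (\ref{eq-mm2n}) inherit well-defined induced operators $d$ and $\partial$ satisfying $d^2=\partial^2=0$ and $d\partial=\partial d$, which is precisely the assertion that (\ref{eq-oabnbg}) are quotient double complexes; the quasi-isomorphism of $q$ with respect to $\partial$ is then the one already recorded via (\ref{eq-1.mpa}). I do not anticipate a serious obstacle, as the Inf/Sup machinery of Section~\ref{sec-saa} supplies everything in the $\partial$-direction. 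The only point requiring care is keeping the two directions cleanly separated, so that the naturality of pull-back (governing $d$) and the mixed relation $d\partial=\partial d$ are invoked exactly where needed and the two differentials are confirmed compatible on every subquotient.
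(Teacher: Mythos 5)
Your proof is correct and follows essentially the same route as the paper's: both reduce everything to the commutation $d\,\epsilon_n^\#=\epsilon_n^\#\,d$ together with $d\partial=\partial d$, conclude that the infimum and supremum families are $d$-stable sub-double complexes, pass to the quotients for (2), and quote the $\partial$-direction quasi-isomorphisms already established via (\ref{eq-0.mpa}) and (\ref{eq-1.mpa}). In fact your write-up is slightly more explicit than the paper's, since you verify the $d$-stability of ${\rm Inf}_n$ and ${\rm Sup}_n$ elementwise, a step the paper leaves implicit.
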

\begin{proof}
The  exterior  derivative   $d$  and   $ \epsilon_\bullet ^\#$  commutes  
\begin{eqnarray*}
d  \epsilon_n ^\#=   \epsilon_n ^\# d
\end{eqnarray*} 
for  any  $n\in  \mathbb{N}$.  
Thus  the  restrictions  of  the  exterior  derivative  of  $\Omega^\bullet(A_n)$  induce   
  well-defined  exterior  derivatives  
\begin{eqnarray}\label{eq-mnmlao1}
d:  {\rm  Inf}_\bullet(\Omega^k(\epsilon_\bullet))\longrightarrow  {\rm  Inf}_\bullet(\Omega^{k+1}(\epsilon_\bullet)) 
\end{eqnarray}
and  
\begin{eqnarray}\label{eq-mnmlao2}
d:  {\rm  Sup}_\bullet(\Omega^k(\epsilon_\bullet))\longrightarrow  {\rm  Sup}_\bullet(\Omega^{k+1}(\epsilon_\bullet))    
\end{eqnarray}
for  any  $k\in \mathbb{N}$.  
Hence  (\ref{eq-lzpaqgj})  are  sub-double     complexes  of   $(\Omega^\bullet(A_\bullet),  d,\partial)$.    
 The  canonical  inclusion   $\theta$  in  
 (\ref{eq-kk.1})  is  a  quasi-isomorphism  of  chain  complexes  with respect  to  $\partial$.  
From   (\ref{eq-mnmlao1})  we  have  an  induced     map   of  the  quotient double  complex
\begin{eqnarray*}
d:  {\rm  Sup}^\bullet(\Omega^k(B_\bullet) )\longrightarrow  
{\rm  Sup}^\bullet(\Omega^{k+1}(B_\bullet) )
\end{eqnarray*}
 and  from   (\ref{eq-mnmlao2})  we  have  an  induced     map    of  the  quotient double  complex
\begin{eqnarray*}
d:  {\rm  Inf}^\bullet(\Omega^k(B_\bullet) )\longrightarrow  
{\rm  Inf}^\bullet(\Omega^{k+1}(B_\bullet) )
\end{eqnarray*}
for  any  $k\in \mathbb{N}$.   
Hence  (\ref{eq-oabnbg})  are  quotient  double  complexes  of    $(\Omega^\bullet(A_\bullet),  d,\partial)$.  The  canonical  quotient  map   $q$  in   (\ref{eq-mzvoaq})   is  a  quasi-isomorphism  of  chain  complexes  with respect  to  $\partial$.
\end{proof}

\begin{corollary}
\label{le-2.99a}
In   Lemma~\ref{le-1.1},  
If  the  graded  submanifold  $B_\bullet$   of  $A_\bullet$  is  a  $\Delta$-submanifold,  then  
both  the   double  complexes  in  (\ref{eq-lzpaqgj})  are  equal to 
\begin{eqnarray*}
( {\rm  Ker}  \epsilon_\bullet ^\#,   d,  \partial)
\end{eqnarray*}
and  both  the  double  complexes  in  (\ref{eq-oabnbg})  are  equal  to  
\begin{eqnarray*}
( \Omega^\bullet(B_\bullet),    d,  \partial).  
\end{eqnarray*}
In this  case,  both the  quasi-isomorphisms  $\theta$  in  (\ref{eq-kk.1})  and  $q$  in   (\ref{eq-mzvoaq})  are  the   identity  maps.  
\end{corollary}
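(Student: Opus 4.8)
The plan is to use the one extra property enjoyed by a $\Delta$-submanifold that a general graded submanifold lacks: its face maps restrict to $B_\bullet$. I would show that this forces the boundary operator $\partial$ to already preserve the family $({\rm Ker}\,\epsilon_n^\#)_{n\in\mathbb{N}}$, after which both the infimum and supremum constructions collapse onto ${\rm Ker}\,\epsilon_\bullet^\#$ and the remaining assertions are immediate.

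The heart of the argument is the containment
\[
\partial\big({\rm Ker}\,\epsilon_{n-1}^\#\big)\subseteq {\rm Ker}\,\epsilon_n^\#\qquad (n\in\mathbb{N}).
\]
Since $B_\bullet$ is a $\Delta$-submanifold, Definition~\ref{def-7.1}~(3) gives $\partial_i^n(B_n)\subseteq B_{n-1}$, so each face map restricts to a smooth map $\partial_i^n|_{B_n}\colon B_n\to B_{n-1}$ fitting into the commuting square $\partial_i^n\circ\epsilon_n=\epsilon_{n-1}\circ(\partial_i^n|_{B_n})$. By naturality of pullback, for every $\omega\in\Omega^\bullet(A_{n-1})$,
\[
\epsilon_n^\#(\partial_i^n)^*\omega=(\partial_i^n|_{B_n})^*\,\epsilon_{n-1}^\#\omega,
\]
which vanishes as soon as $\epsilon_{n-1}^\#\omega=0$. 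Taking the signed sum over $i$ then gives $\epsilon_n^\#\,\partial\omega=0$, proving the containment.

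I would then substitute this into the two definitions. For the supremum chain complex ${\rm Sup}_n(\Omega^\bullet(\epsilon_\bullet))={\rm Ker}\,\epsilon_n^\#+\partial\,{\rm Ker}\,\epsilon_{n-1}^\#$, the containment absorbs the second summand, so it equals ${\rm Ker}\,\epsilon_n^\#$. For the infimum chain complex, the same containment with $n$ shifted up by one yields ${\rm Ker}\,\epsilon_n^\#\subseteq\partial^{-1}{\rm Ker}\,\epsilon_{n+1}^\#$, so the defining intersection ${\rm Inf}_n(\Omega^\bullet(\epsilon_\bullet))={\rm Ker}\,\epsilon_n^\#\cap\partial^{-1}{\rm Ker}\,\epsilon_{n+1}^\#$ is again ${\rm Ker}\,\epsilon_n^\#$. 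Hence both complexes in (\ref{eq-lzpaqgj}) coincide with $({\rm Ker}\,\epsilon_\bullet^\#,d,\partial)$ and the inclusion $\theta$ of (\ref{eq-kk.1}) is the identity.

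Finally, passing to quotients, I would observe that $\epsilon_n$ is a smooth embedding, so $\epsilon_n^\#$ is surjective (any form on $B_n$ extends over a tubular neighborhood), giving the canonical identification $\Omega^\bullet(A_n)/{\rm Ker}\,\epsilon_n^\#\cong\Omega^\bullet(B_n)$. Feeding ${\rm Sup}_n(\Omega^\bullet(\epsilon_\bullet))={\rm Inf}_n(\Omega^\bullet(\epsilon_\bullet))={\rm Ker}\,\epsilon_n^\#$ into (\ref{eq-mm1n}) and (\ref{eq-mm2n}) shows that both complexes in (\ref{eq-oabnbg}) equal $(\Omega^\bullet(B_\bullet),d,\partial)$, and that the map $q$ of (\ref{eq-mzvoaq}) is the identity of this common quotient. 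The main obstacle is localized entirely in the naturality square for the restricted face maps; once that is in hand, the rest is direct substitution into the definitions.
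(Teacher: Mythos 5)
Your proposal is correct and is essentially the paper's own argument written out in full: the paper proves this corollary by citing Lemma~\ref{le-6-987} and Lemma~\ref{le-1.1}, and the naturality square $\epsilon_n^\#(\partial_i^n)^*=(\partial_i^n|_{B_n})^*\epsilon_{n-1}^\#$ that you identify as the heart of the matter is precisely the commutation $\partial\,\epsilon^\#=\epsilon^\#\,\partial$ established inside the proof of Lemma~\ref{le-6-987}, from which the collapse of ${\rm Inf}_\bullet$ and ${\rm Sup}_\bullet$ onto ${\rm Ker}\,\epsilon_\bullet^\#$ and the identifications of the quotients with $\Omega^\bullet(B_\bullet)$ follow by the same substitutions you perform.
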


\begin{proof}
The  corollary  follows  from  Lemma~\ref{le-6-987}  and  Lemma~\ref{le-1.1}.  
\end{proof}

\subsection{Double  complexes  for  associated  $\Delta$-manifolds}\label{ssec-3.3}

\begin{definition}
\label{def-0ma1h}
Let  $A_\bullet=(A_n)_{n\in \mathbb{N}} $  be  a  $\Delta$-manifold.  
Let  $p\in  A_n$.  
The    {\it    $\Delta$-closure}        of  $p$  in  $A_\bullet$,  
denoted  by $\Delta  p$,    is  
a  collection  of  finitely  many points  in  $A_\bullet$  given  by  
\begin{eqnarray*}
p, ~~~ \partial^n_i p,  ~~~\partial^{n-1}_j  \partial^n_i   p,  ~~~\partial^{n-2}_k \partial^{n-1}_j  \partial^n_i   p,  ~~~
\partial^{n-3}_l\partial^{n-2}_k \partial^{n-1}_j  \partial^n_i   p, ~~~\cdots
\end{eqnarray*}
for  all  possible  $i,j,k,l,\ldots$.   The  sequence  terminates  after  $n$-steps.  
\end{definition}

 \begin{lemma}\label{le-250218}
 $\Delta  p$  is  the  smallest  $\Delta$-set  containing  $p$.  
 \end{lemma}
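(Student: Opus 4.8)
The plan is to verify the two properties that together characterise ``smallest $\Delta$-set containing $p$'': first that $\Delta p$ is itself a $\Delta$-set, i.e.\ a graded subset of $A_\bullet$ closed under all the face maps; and second that $\Delta p$ is contained in every $\Delta$-set that contains $p$. I would begin by recording the obvious reformulation of Definition~\ref{def-0ma1h}: the collection $\Delta p$ is exactly the set of all iterated images
\[
\partial^{n-m+1}_{i_m}\cdots \partial^{n-1}_{i_2}\partial^n_{i_1} p
\]
of $p$ under finite composites of face maps (including the empty composite, which returns $p$ itself), ranging over all admissible index sequences. Since every face map $\partial^k_j$ strictly lowers the grading by one, no composite of more than $n$ face maps can be applied to $p\in A_n$; this gives at once both that $\Delta p$ is finite and that the listing in Definition~\ref{def-0ma1h} terminates after $n$ steps.

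For the first property I would show closure under face maps directly. Let $q\in \Delta p\cap A_k$; by the reformulation above, $q=\partial_{i_r}\cdots \partial_{i_1}p$ for some sequence of face maps. Applying any face map $\partial^k_j$ produces $\partial^k_j\partial_{i_r}\cdots \partial_{i_1}p$, which is again a composite of face maps applied to $p$ and therefore lies in $\Delta p$. Hence the restricted maps $\partial^k_j\colon \Delta p\cap A_k\to \Delta p\cap A_{k-1}$ are well defined, so $\Delta p$ is closed under all face maps. The $\Delta$-identity required of a $\Delta$-set is then inherited for free: the face maps on $\Delta p$ are the restrictions of those on $A_\bullet$, so they satisfy the same identity as in Definition~\ref{def-7.1}~(1). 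Thus $(\Delta p\cap A_k)_{k\in\mathbb{N}}$ is a $\Delta$-set.

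For the second property, let $S_\bullet$ be any $\Delta$-set contained in $A_\bullet$ with $p\in S_n$. I would argue by induction on the length $r$ of the composite that every iterated image $\partial_{i_r}\cdots \partial_{i_1}p$ lies in $S_\bullet$: the base case $r=0$ is the hypothesis $p\in S_\bullet$, and the inductive step uses that $S_\bullet$, being a $\Delta$-set, is closed under the face map $\partial_{i_r}$ applied to the element $\partial_{i_{r-1}}\cdots\partial_{i_1}p\in S_\bullet$ supplied by the inductive hypothesis. Since $\Delta p$ consists precisely of such iterated images, this yields $\Delta p\subseteq S_\bullet$.

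Combining the two properties, $\Delta p$ is a $\Delta$-set containing $p$ that is contained in every $\Delta$-set containing $p$, which is exactly the assertion that it is the smallest such $\Delta$-set. I do not anticipate a genuine obstacle here; the only point demanding care is the bookkeeping in the reformulation of Definition~\ref{def-0ma1h}---namely checking that the displayed list really exhausts all finite composites of face maps and that applying one further face map does not leave this list---after which both inclusions follow from closure and the one-line induction.
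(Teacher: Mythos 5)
Your proof is correct and follows essentially the same route as the paper: both verify that $\Delta p$, viewed as the collection of all iterated face-map images of $p$, is itself a $\Delta$-set (closure under face maps, with the $\Delta$-identity inherited by restriction), and then deduce minimality by showing any $\Delta$-set containing $p$ must contain every such iterated image. The only cosmetic difference is that the paper normalizes the composites into strictly decreasing index sequences via the $\Delta$-identity, whereas you work with arbitrary composites, which is equally valid.
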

 
 \begin{proof}
 Write  $\Delta  p=  (S_n)_{n\in  \mathbb{N}}$  where   
 \begin{eqnarray*}
 &S_n=\{p\}, \\
 &  S_{n-1}= \{\partial^n_i p\mid  0\leq  i\leq  n\},  \\
 &
 S_{n-2} =  \{\partial^{n-1}_j  \partial^n_i   p\mid    0\leq  j <  i\leq  n\},\\
  &   \cdots,\\
  & S_1=    \{ \partial^1_k  \cdots  \partial^{n-1}_j  \partial^n_i   p\mid    0\leq   k<  \cdots <  j <  i\leq  n\},
  \\
  & S_0=\{\emptyset\}.     
 \end{eqnarray*}
 By  Definition~\ref{def-7.1}~(1)  and  (2),  
 $\Delta  p$  is  a  $\Delta$-set  containing  $p$.  
  Let  $(S'_n)_{n\in \mathbb{N}}$  be  a  $\Delta$-set  containing  $p$.  
 Then  $p\in  S'_n$.  It  follows  that  $S_{m}\subseteq  S'_{m}$ 
 for  $0\leq  m\leq  n$.  
  Thus $\Delta  p$  is  the  smallest  $\Delta$-set  containing  $p$.   
 \end{proof}

\begin{definition}\label{def-0ma2h}
Let  $A_\bullet=(A_n)_{n\in \mathbb{N}} $  be  a  $\Delta$-manifold 
and  let   $B_\bullet=(B_n)_{n\in \mathbb{N}} $  be  a  graded  submanifold  of  $A_\bullet$.  
\begin{enumerate}[(1)]  
\item
We  define  the  {\it  associated  $\Delta$-manifold}  $\Delta  B_\bullet$  of  $B_\bullet$ 
to  be  the  smallest  $\Delta$-submanifold  
of  $A_\bullet$  containing  $B_\bullet$ as  a  graded  submanifold.  Precisely,  
\begin{eqnarray*}
\Delta  B_\bullet=  \bigcup_{p \in  B_\bullet }  \Delta   p;  
\end{eqnarray*}

\item
We  define the  {\it  lower-associated  $\Delta$-manifold}  $\delta  B_\bullet$  of  $B_\bullet$ 
to  be  the   largest   $\Delta$-submanifold  of  $A_\bullet$    contained  in  $B_\bullet$ as  a  graded  submanifold.   Precisely,  
\begin{eqnarray*}
\delta  B_\bullet=    \bigcup_{\Delta  p \subseteq   B_\bullet }     p.  
\end{eqnarray*}
\end{enumerate}
\end{definition}

\begin{lemma}\label{le-mboay7}
Both  $\Delta  B_\bullet$  and   $\delta  B_\bullet$
 do  not  depend  on  the  choice  of   the  ambient  $\Delta$-manifold  $A_\bullet$.   
\end{lemma}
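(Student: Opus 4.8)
The plan is to argue exactly as in the proof of Lemma~\ref{le-hqlca7}. I would let $A_\bullet$ and $A'_\bullet$ be two $\Delta$-manifolds whose face maps coincide on the intersection $A_\bullet\cap A'_\bullet$, and suppose that $B_\bullet$ is a graded submanifold of each. Then $B_n\subseteq A_n\cap A'_n$ for every $n$, so every point of $B_\bullet$ lies in the region where the two systems of face maps agree. It therefore suffices to compare the two constructions of Definition~\ref{def-0ma2h}, one carried out in $A_\bullet$ and one in $A'_\bullet$.

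The key step, and the one I expect to be the main obstacle, is to show that for each $p\in B_\bullet$ the $\Delta$-closure $\Delta p$ of Definition~\ref{def-0ma1h} is the same set whether its iterated face maps are taken in $A_\bullet$ or in $A'_\bullet$. I would prove this by induction on the number of face maps applied to $p$. The base case is $p$ itself, which lies in $A_\bullet\cap A'_\bullet$. For the inductive step, I would assume that a point $q$ reached from $p$ by a string of face maps agrees in the two computations and lies in $A_m\cap A'_m$; then $\partial^m_i q$ is one and the same element computed either way, and since it is simultaneously an image under a face map of $A_\bullet$ and under a face map of $A'_\bullet$, it again lies in $A_{m-1}\cap A'_{m-1}$. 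The delicate point is precisely this verification that the iterated images never leave the intersection, so that the coincidence hypothesis stays applicable at every one of the finitely many stages; granting it, the induction propagates the agreement through all of $\Delta p$.

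Once the claim is established, the conclusions follow formally. For $\Delta B_\bullet=\bigcup_{p\in B_\bullet}\Delta p$ the independence is immediate, since it is a union of ambient-independent sets over the fixed index set $B_\bullet$. For $\delta B_\bullet=\bigcup_{\Delta p\subseteq B_\bullet}p$, I would note that $p\in\Delta p$ forces any contributing $p$ to lie in $B_\bullet$, so the union ranges over $p\in B_\bullet$, and for such $p$ the selection condition $\Delta p\subseteq B_\bullet$ depends only on the ambient-independent set $\Delta p$; hence the index set of the union, and therefore $\delta B_\bullet$ itself, is independent of the choice of $A_\bullet$. Finally, as both $\Delta B_\bullet$ and $\delta B_\bullet$ sit as graded submanifolds of the common $\Delta$-manifold $A_\bullet\cap A'_\bullet$, their inherited smooth structures agree as well.
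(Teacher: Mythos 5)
Your proof is correct and takes essentially the same approach as the paper: both compare the two constructions of Definition~\ref{def-0ma2h} carried out in $A_\bullet$ and in $A'_\bullet$, using the hypothesis that the face maps coincide on $A_\bullet\cap A'_\bullet$. The paper's proof merely asserts the equality ``by Definition~\ref{def-0ma2h}'', whereas you supply the inductive verification that iterated face images of points of $B_\bullet$ never leave the intersection, which is precisely the detail the paper leaves implicit.
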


\begin{proof}
Let  $A_\bullet$  and  $A'_\bullet$  be  $\Delta$-manifolds  such that  their  face  maps  
coincide   on  the  intersection  $A_\bullet  \cap    A' _\bullet$. 
Let  $B_\bullet$  be  a  graded  submanifold  of  $A_\bullet$  as  well as  $A'_\bullet$.  
 Then  by  Definition~\ref{def-0ma2h}  (1)  and  (2)  respectively,   
 \begin{enumerate}[(1)]
 \item
the  associated  $\Delta$-manifold  $\Delta  B_\bullet$  in  $A_\bullet$  and  
the  associated  $\Delta$-manifold  $\Delta'  B_\bullet$  in  $A'_\bullet$   are  equal;  
\item
the  lower-associated  $\Delta$-manifold  $\delta  B_\bullet$  in  $A_\bullet$  and  
the  lower-associated  $\Delta$-manifold  $\delta'  B_\bullet$  in  $A'_\bullet$   are  equal.  
\end{enumerate}
These  respectively  show  that  $\Delta  B_\bullet$  and   $\delta  B_\bullet$
 do  not  depend  on  the  choice  of   the  ambient  $\Delta$-manifold  $A_\bullet$.  
\end{proof}

\begin{lemma}\label{le-sqc}
For  any   $\Delta$-manifold  $A_\bullet=(A_n)_{n\in \mathbb{N}} $  and  any  
  graded  submanifold   $B_\bullet=(B_n)_{n\in \mathbb{N}} $    of  $A_\bullet$,    we  have  
    a  sequence  of  surjective  homomorphisms  of  double  complexes  
    \begin{eqnarray}
&&   ( \Omega^\bullet(\Delta  B_\bullet),  d,  \partial)  \longrightarrow  ({\rm  Sup}^\bullet(\Omega^\bullet(B_\bullet) ),  d,  \partial) \nonumber\\
&& \longrightarrow   
({\rm  Inf}^\bullet (\Omega^\bullet(B_\bullet)) ,  d,  \partial)
\longrightarrow    ( \Omega^\bullet(\delta  B_\bullet),  d,  \partial).      
\label{eq-sqckgoa}
    \end{eqnarray}
    Moreover,  $B_\bullet$  is  a  $\Delta$-submanifold  of  $A_\bullet$  iff   all the three  homomorphisms  in  (\ref{eq-sqckgoa})   are  the  identity. 
\end{lemma}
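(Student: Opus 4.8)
The plan is to realise all three arrows in (\ref{eq-sqckgoa}) as maps induced on quotients of the single double complex $(\Omega^\bullet(A_\bullet),d,\partial)$, so that surjectivity and the double-complex property can be read off from Lemma~\ref{le-1.1} and Lemma~\ref{le-6-987}. Fix the ambient $\Delta$-manifold $A_\bullet$; by Lemma~\ref{le-hqlca7} and Lemma~\ref{le-mboay7} none of the terms of (\ref{eq-sqckgoa}) depends on this choice. Write $\iota_\Delta\colon \Delta B_\bullet\hookrightarrow A_\bullet$ and $\iota_\delta\colon \delta B_\bullet\hookrightarrow A_\bullet$ for the inclusions of the associated and lower-associated $\Delta$-submanifolds, and recall from (\ref{eq-mm1n}) and (\ref{eq-mm2n}) that ${\rm Sup}^\bullet(\Omega^\bullet(B_\bullet))=\Omega^\bullet(A_\bullet)/{\rm Inf}_\bullet(\Omega^\bullet(\epsilon_\bullet))$ and ${\rm Inf}^\bullet(\Omega^\bullet(B_\bullet))=\Omega^\bullet(A_\bullet)/{\rm Sup}_\bullet(\Omega^\bullet(\epsilon_\bullet))$, while $\Omega^\bullet(\Delta B_\bullet)=\Omega^\bullet(A_\bullet)/{\rm Ker}(\iota_\Delta^\#)$ and $\Omega^\bullet(\delta B_\bullet)=\Omega^\bullet(A_\bullet)/{\rm Ker}(\iota_\delta^\#)$ by Lemma~\ref{le-6-987}.

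The technical core is the pair of inclusions of graded subspaces of $\Omega^\bullet(A_\bullet)$
\begin{eqnarray*}
{\rm  Ker}(\iota_\Delta^\#)\subseteq {\rm  Inf}_\bullet(\Omega^\bullet(\epsilon_\bullet)),\qquad {\rm  Sup}_\bullet(\Omega^\bullet(\epsilon_\bullet))\subseteq {\rm  Ker}(\iota_\delta^\#).
\end{eqnarray*}
For the first, let $\omega$ vanish on $(\Delta B)_n$. Then $\omega$ vanishes on $B_n\subseteq(\Delta B)_n$, so $\omega\in{\rm Ker}\,\epsilon_n^\#$; and since $\Delta B_\bullet$ is a $\Delta$-submanifold, each face map sends $B_{n+1}\subseteq(\Delta B)_{n+1}$ into $(\Delta B)_n$ and carries the corresponding tangent spaces into $T(\Delta B)_n$, so every pullback $(\partial_i^{n+1})^*\omega$ vanishes on $B_{n+1}$ and hence $\partial\omega\in{\rm Ker}\,\epsilon_{n+1}^\#$; thus $\omega\in{\rm Inf}_n(\Omega^\bullet(\epsilon_\bullet))$. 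The second inclusion is the mirror image: for $\alpha+\partial\beta$ with $\alpha\in{\rm Ker}\,\epsilon_n^\#$ and $\beta\in{\rm Ker}\,\epsilon_{n-1}^\#$, the summand $\alpha$ vanishes on $(\delta B)_n\subseteq B_n$, while $\partial\beta$ vanishes on $(\delta B)_n$ because, $\delta B_\bullet$ being a $\Delta$-submanifold, each face map sends $(\delta B)_n$ into $(\delta B)_{n-1}\subseteq B_{n-1}$, where $\beta$ vanishes.

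Granting these inclusions, the natural quotient maps $\Omega^\bullet(\Delta B_\bullet)\to{\rm Sup}^\bullet(\Omega^\bullet(B_\bullet))$ and ${\rm Inf}^\bullet(\Omega^\bullet(B_\bullet))\to\Omega^\bullet(\delta B_\bullet)$ are defined, and are the first and third arrows of (\ref{eq-sqckgoa}); both are surjective, and both are homomorphisms of double complexes, since they are induced by restriction of differential forms, which commutes with $d$ and, through the $\Delta$-identity, with $\partial$ (cf. the proof of Lemma~\ref{le-6-987}), the sub-double-complex structure being furnished by Lemma~\ref{le-1.1}. The middle arrow $q$ is exactly the surjective quasi-isomorphism of Lemma~\ref{le-1.1}~(2). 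This establishes the sequence (\ref{eq-sqckgoa}) together with the quasi-isomorphism statement for $q$.

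It remains to prove the equivalence. If $B_\bullet$ is a $\Delta$-submanifold then $\delta B_\bullet=B_\bullet=\Delta B_\bullet$, so ${\rm Ker}(\iota_\Delta^\#)={\rm Ker}\,\epsilon_\bullet^\#={\rm Ker}(\iota_\delta^\#)$, and by Corollary~\ref{le-2.99a} also ${\rm Inf}_\bullet(\Omega^\bullet(\epsilon_\bullet))={\rm Sup}_\bullet(\Omega^\bullet(\epsilon_\bullet))={\rm Ker}\,\epsilon_\bullet^\#$; hence every subspace quotiented out coincides and all three arrows are the identity. Conversely, if all three arrows are the identity then the three quotienting subspaces agree, so ${\rm Ker}(\iota_\Delta^\#)={\rm Ker}(\iota_\delta^\#)$; as $(\delta B)_n\subseteq(\Delta B)_n$ are embedded submanifolds of $A_n$, a bump form supported near a point of $(\Delta B)_n\setminus(\delta B)_n$ would lie in ${\rm Ker}(\iota_\delta^\#)\setminus{\rm Ker}(\iota_\Delta^\#)$ unless the two submanifolds coincide, so $\delta B_\bullet=\Delta B_\bullet$ and therefore $\delta B_\bullet=B_\bullet=\Delta B_\bullet$, i.e. $B_\bullet$ is a $\Delta$-submanifold. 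The main obstacle is the two $\partial$-computations in the key inclusions: it is precisely the closedness of $\Delta B_\bullet$ and $\delta B_\bullet$ under the face maps that forces the alternating sums of face-map pullbacks to vanish on the relevant submanifold.
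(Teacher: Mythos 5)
Your construction of the sequence (\ref{eq-sqckgoa}) is correct, but it takes a genuinely different route from the paper. The paper obtains the first two arrows in one stroke by substituting $A_\bullet=\Delta B_\bullet$ into the already-established sequence (\ref{eq-mkaoq1}) (legitimate by Lemma~\ref{le-hqlca7} and Lemma~\ref{le-mboay7}), and obtains the third arrow by applying the ${\rm Inf}$-construction to the restriction map (\ref{eq-2.5.7a}) and invoking Corollary~\ref{le-2.99a} to identify ${\rm Inf}^n(\Omega^\bullet(\delta B_\bullet))$ with $\Omega^\bullet((\delta B)_n)$. You instead fix one ambient $A_\bullet$, present all four terms as quotients of $\Omega^\bullet(A_\bullet)$, and prove the two inclusions ${\rm Ker}(\iota_\Delta^\#)\subseteq {\rm Inf}_\bullet(\Omega^\bullet(\epsilon_\bullet))$ and ${\rm Sup}_\bullet(\Omega^\bullet(\epsilon_\bullet))\subseteq {\rm Ker}(\iota_\delta^\#)$, so that the outer arrows become quotient maps between quotients of $\Omega^\bullet(A_\bullet)$ by nested sub-double complexes. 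Your two face-map computations are exactly right: it is precisely the closedness of $\Delta B_\bullet$ and $\delta B_\bullet$ under the face maps that makes the alternating sums vanish on the relevant submanifolds. What the paper's route buys is brevity, since it reuses (\ref{eq-mkaoq1}) and Corollary~\ref{le-2.99a} wholesale; what yours buys is a self-contained verification that also makes explicit the induced-map-on-${\rm Inf}$ step which the paper leaves implicit when passing from (\ref{eq-2.5.7a}) to the third arrow.

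The converse half of the ``Moreover'' statement is the one place where your argument has a gap that the paper's does not share. You translate ``all three arrows are the identity'' into ``the three quotienting subspaces agree'' and then want to deduce $(\Delta B)_n=(\delta B)_n$ from ${\rm Ker}(\iota_\Delta^\#)={\rm Ker}(\iota_\delta^\#)$ via a bump form. That deduction fails for general nested submanifolds: if $(\delta B)_n$ is dense in $(\Delta B)_n$ (for instance $(\Delta B)_n=A_n=\mathbb{R}^2$ and $(\delta B)_n=\mathbb{R}^2\setminus\{0\}$), then by continuity every form whose pullback to $(\delta B)_n$ vanishes also vanishes on $(\Delta B)_n$, so the two kernels coincide while the manifolds differ, and no bump form supported near the missing point can avoid $(\delta B)_n$. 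The paper's proof sidesteps this by reading ``identity'' literally: if the composite of the three arrows is the identity, then its source $\Omega^\bullet(\Delta B_\bullet)$ and its target $\Omega^\bullet(\delta B_\bullet)$ are the same object, which forces $\Delta B_\bullet=\delta B_\bullet$, and then Definition~\ref{def-0ma2h} gives $\Delta B_\bullet=B_\bullet=\delta B_\bullet$. To close your version, either adopt that literal reading of ``identity'' (in which case the bump form is unnecessary), or add a hypothesis that rules out the density phenomenon, e.g.\ that $(\delta B)_n$ is closed in $(\Delta B)_n$ for each $n$.
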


\begin{proof}
Substitute  $A_\bullet$  with  $\Delta  B_\bullet$   in  (\ref{eq-mkaoq1}).  
We  obtain  the first  three  double  complexes  as well as the 
first  two  surjective  homomorphisms  in  (\ref{eq-sqckgoa}).  
 The  embedding  $  \delta  B_\bullet\longrightarrow  B_\bullet$
 induces  a  surjective  pull-back  map  of  differential forms  
\begin{eqnarray}\label{eq-2.5.7a}
\Omega^\bullet( B_\bullet)\longrightarrow  \Omega^\bullet(\delta   B_\bullet).  
\end{eqnarray}
Since  
$
{\rm  Inf}^n(\Omega^\bullet(\delta  B_\bullet))= \Omega^\bullet((\delta  B)_n)
$
for  each  $n\in  \mathbb{N}$,  from   (\ref{eq-2.5.7a})  we  obtain  a  surjective  homomorphism
 of  double  complexes 
 \begin{eqnarray*}
 ({\rm  Inf}^\bullet (\Omega^\bullet(B_\bullet) ),  d,  \partial)
\longrightarrow    ( \Omega^\bullet(\delta  B_\bullet),  d,  \partial).  
 \end{eqnarray*}
We  obtain   (\ref{eq-sqckgoa}).  
Suppose $B_\bullet$  is  a  $\Delta$-submanifold  of  $A_\bullet$.  
Then  $\Delta B_\bullet = B=\delta  B_\bullet$  and  ${\rm  Inf}^\bullet (\Omega^\bullet(B_\bullet) )={\rm  Sup}^\bullet (\Omega^\bullet(B_\bullet) )$.  
Hence  all  the  homomorphisms  in   (\ref{eq-sqckgoa})  are  the  identity.  
Conversely,  suppose   all  the  homomorphisms  in   (\ref{eq-sqckgoa})  are  the  identity.  
Then  $\Delta  B=\delta   B$.   Hence  by Definition~\ref{def-0ma2h},  we  have   $\Delta B_\bullet = B_\bullet=\delta  B_\bullet$,   which  implies  $B_\bullet$  is  a  $\Delta$-submanifold  of  $A_\bullet$.  
\end{proof}

\begin{remark}
By  Lemma~\ref{le-hqlca7}  and  Lemma~\ref{le-mboay7},  the  sequence    
(\ref{eq-sqckgoa})  only  depends  on  $B_\bullet$  and  does  not depend  on  the choice  of  the  ambient  $\Delta$-manifold  $A_\bullet$.  
\end{remark}
 
  \section{Configuration  spaces  and  hypergraphs}\label{sec-conf-hpg}
  
  In  this  section,  we  review  the  definition  of  configuration  spaces  and  the  group  actions  
  of  symmetric  groups.  
  We  consider  hypergraphs  with  vertices   from  a  manifold  $M$.   Thus  the  hypergraphs  are
      subspaces  of    configuration  spaces.

Let  $M$  be  a  Riemannian  manifold  of  dimension  $m$.  
  For any  positive integer  $n$,  the  $n$-th 
  ordered  configuration  space  of  $M$  
  \begin{eqnarray}\label{eq-00bz}
  {\rm  Conf}_n(M)= \{(x_1,\ldots,x_n)\in  M^n\mid  x_i\neq  x_j {\rm  ~for~any~}i\neq  j\}
  \end{eqnarray}
   is  an  $nm$-dimensional  Riemannian  submanifold     of  the  $n$-fold  self-product  of  $M$.   
   We  call  an  element  in  ${\rm  Conf}_n(M)$  a  {\it  directed $n$-hyperedge}   on  $M$
   and  denote  it  as  ${\vec  \sigma}(M)$. 
   We  call  a  collection  of  certain    directed $n$-hyperedges  an  {\it   $n$-uniform  hyperdigraph}  on  $M$  and  denote  it  as  $\vec{\mathcal{H}}_n(M)$.  
   In  other  words,   $\vec{\mathcal{H}}_n(M)$  is  any  subspace  of  ${\rm  Conf}_n(M)$.  
   We  call  a  union    $\bigcup_{n=1}^\infty\vec{\mathcal{H}}_n(M)$  a   {\it   hyperdigraph}  on  $M$  and  
  denote  it as  $\vec{\mathcal{H}}(M)$.  
Note  that  each  $\vec{\mathcal{H}}_n(M)$  is  a  subspace  of   $ {\rm  Conf}_n(M)$  hence  $\vec{\mathcal{H}}(M)$  is  a  graded  subspace  of  the   union  
$\bigcup_{n=1}^\infty  {\rm  Conf}_n(M)$.

 The  symmetric   group  $\Sigma_n$   on  $n$-letters  acts  on  ${\rm  Conf}_n(M)$  from  left  by 
 permuting   the  order  of  $(x_1,\ldots,x_n)$.  
 The  $n$-th  unordered  configuration  space  on  $M$  is  the  orbit  space  
 \begin{eqnarray*}
  {\rm  Conf}_n(M)/\Sigma_n =   \{(x_1,\ldots,x_n)\in    {\rm  Conf}_n(M)\}  /  (x_1,\ldots,x_n)\sim (x_{s(1)},\ldots,x_{s(n)})  
 \end{eqnarray*}
 where  $s$  runs  over  $\Sigma_n$.  
  We  call  an  element  in  ${\rm  Conf}_n(M)/\Sigma_n$  a  {\it  $n$-hyperedge}   on  $M$
   and  denote  it  as  ${  \sigma}(M)$. 
   We  call  a  collection  of  certain     $n$-hyperedges  an   {\it    $n$-uniform  hypergraph}  on  $M$  and  denote  it  as  $ {\mathcal{H}}_n(M)$.  
   In  other  words,  $ {\mathcal{H}}_n(M)$   is  any  subspace  of   ${\rm  Conf}_n(M)/\Sigma_n$.   
   We  call  a  union    $\bigcup_{n=1}^\infty {\mathcal{H}}_n$  a   {\it    hypergraph}  on  $M$  and  
  denote  it as  $ {\mathcal{H}}(M)$.  
Note  that  each  ${\mathcal{H}}_n(M)$  is  a  subspace  of   $ {\rm  Conf}_n(M)/\Sigma_n$    hence  ${\mathcal{H}}(M)$  is  a  graded   subspace  of  the   union  
$\bigcup_{n=1}^\infty  {\rm  Conf}_n(M)/\Sigma_n$.     
The  action  of  $\Sigma_n$  on  ${\rm  Conf}_n(M)$  is  free  and  properly  discontinuous.  
It  induces  a  $n!$-sheeted  covering  map  
\begin{eqnarray}\label{eq-0.a1}
\pi_n:     {\rm  Conf}_n(M)\longrightarrow   {\rm  Conf}_n(M)/\Sigma_n.  
\end{eqnarray}

  \begin{example}
  Let  $M=V$  be  a  discrete  set  of  vertices.   
    Then  
    \begin{enumerate}[(1)]
    \item
  ${\rm  Conf}_n(V)$  is  the  set  of  all  the    words  $v_0\ldots  v_{n-1}$,  
 where  the  order  is  considered,     
    such  that  $v_0,\ldots,v_{n-1}\in  V$  are  distinct;  
    \item 
     ${\rm  Conf}_n(V)/\Sigma_n$  is the  set  of  all  the     sets 
    $\{v_0,\ldots,v_{n-1}\}$,  where  the  order  is  not  considered,  
  such  that  $v_0,\ldots,v_{n-1}\in  V$  are  distinct; 
  \item 
   ${\rm  Conf}_2(V)$  is  the  complete  digraph  on  $V$  where  each  edge  is  
   assigned  with double  directions,  
   \item
   ${\rm  Conf}_2(V)/\Sigma_2$  is  the  complete  graph  on  $V$; 
   \item
     $\bigcup_{n=  1}^\infty {\rm  Conf}_n(V)/\Sigma_n$  is  the  clique  complex  of  the  complete  
  graph  on  $V$;
  \item    
   a     $2$-uniform  hyperdigraph  on  $V$  is  a  digraph  on  $V$  in  the  
  usual  sense;
  \item 
  a  $2$-uniform  hypergraph  on  $V$  is  a  graph  on  $V$  in  the  usual  sense;
  \item  a     hypergraph  $\mathcal{H}$  on  $V$  is  a  hypergraph  in  the  usual  sense.   
  \end{enumerate} 
  \end{example}

  \begin{example}
  Let   $M=S^1$   be  the   unit  circle.    Then  
  \begin{enumerate}[(1)]
  \item
  ${\rm  Conf}_n(S^1)$  is  the  set  of  all  the    words  $e^{i\theta_1}\ldots  e^{i\theta_n}$,  
 where  the  order  is  considered,     
    such  that  $0\leq  \theta_1,\cdots,  \theta_n<2\pi$  are  distinct;  
    \item  
     ${\rm  Conf}_n(S^1)/\Sigma_n$  is the  set  of  all  the     sets 
    $\{e^{i\theta_1}, \ldots,  e^{i\theta_n}\}$,  where  the  order  is  not  considered,  
  such  that  $0\leq  \theta_1<\cdots< \theta_n<2\pi$  are  distinct;
  \item  
 a  directed  $2$-hyperedge  on  $S^1$  is  a  directed  secant;
 \item
   a  $2$-hyperedge  
 on  $S^1$  is  a  secant;
 \item
    a  directed  $3$-hyperedge  on  $S^1$  is  an  inscribed triangle  where  
 the  order  of  the  three  vertices  is  considered;
 \item
   a  $3$-hyperedge  
 on  $S^1$  is   an  inscribed triangle  where  
 the  order  of  the  three  vertices  is  not  considered.    
 \end{enumerate}
  \end{example}

  \begin{example}
  Let  $M=S^m$  be  the  unit  $m$-sphere  in  $\mathbb{R}^{m+1}$.   
  \begin{enumerate}[(1)]
  \item
 The  collection  of  all  the  ordered    pairs  of  antipodal  points  in  $S^m$  is  a  directed  $2$-uniform  hypergraph  $\vec{\mathcal{H}}_2(S^m)$  on  $S^m$,  which  can  be  identified  with  $S^m$;  
  \item
  The  collection  of  all  the  unordered    pairs  of  antipodal  points  in  $S^m$  is  a     $2$-uniform  hypergraph  ${\mathcal{H}}_2(S^m)$  on  $S^m$,  which  is  $\mathbb{R}P^m$;
  \item 
  The  covering  map  $\pi_2:  \vec{\mathcal{H}}_2(S^m)\longrightarrow  {\mathcal{H}}_2(S^m)$  is  
  the  canonical  $2$-sheeted  covering  map  from   $S^m$  to  $\mathbb{R}P^m$.  
  \end{enumerate}
  \end{example}

 \begin{example}\label{ex-2.9vz}
 Let  $r\geq  0$  be  a  real  number.  
 Consider  the  directed  $n$-uniform  hypergraph  
 \begin{eqnarray}\label{eq-00ba}
 {\rm  Conf}_n(M,r)= \{(x_1,\ldots,x_n)\in  M^n\mid  d(x_i,    x_j)>  2r {\rm  ~for~any~}i\neq  j\}, 
 \end{eqnarray}
 where  $d$  is  the  geodesic  distance  on  $M$,   and  the  $n$-uniform  hypergraph 
 \begin{eqnarray}\label{eq-11ba}
    {\rm  Conf}_n(M,r)/\Sigma_n=   \{(x_1,\ldots,x_n)\in     {\rm  Conf}_n(M,r)\}  /  (x_1,\ldots,x_n)\sim (x_{s(1)},\ldots,x_{s(n)})     
 \end{eqnarray}
  where  $s$  runs  over  $\Sigma_n$.  
  Note  that  if  $M$  is  without  boundary,  then  both  (\ref{eq-00ba})  and  (\ref{eq-11ba})  are  open  manifolds  without  boundary  while  
  \begin{eqnarray*}
  \bigcap_{r<  s}   {\rm  Conf}_n(M,r)=\{(x_1,\ldots,x_n)\in  M^n\mid  d(x_i,    x_j)\geq  2s {\rm  ~for~any~}i\neq  j\}
  \end{eqnarray*}
  and  
   \begin{eqnarray*}
   \bigcap_{r<  s} {\rm  Conf}_n(M,r)/\Sigma_n        
 \end{eqnarray*}
 could  be    manifolds     with  boundaries.  
   We  have     filtrations    
  \begin{eqnarray}\label{eq-099a}
  \{  {\rm  Conf}_n(M,r)\mid   r\geq 0\},  ~~~~~~   \{  {\rm  Conf}_n(M,r)/\Sigma_n\mid   r\geq 0\}
  \end{eqnarray}
  such that  
  \begin{enumerate}[(1)]
  \item
    $ {\rm  Conf}_n(M,0)= {\rm  Conf}_n(M)$  and  $ {\rm  Conf}_n(M,0)/\Sigma_n= {\rm  Conf}_n(M)/\Sigma_n$;
    \item   
  $ {\rm  Conf}_n(M,r) \supseteq   {\rm  Conf}_n(M,s)$   and 
    $ {\rm  Conf}_n(M,r) /\Sigma_n\supseteq {\rm  Conf}_n(M,s)/\Sigma_n$    for  any   $0\leq  r  <s$; 
  \item
  if  $M$  is  path-connected,  then   
  \begin{eqnarray*}
 & \lim_{r\to+\infty}    {\rm  Conf}_n(M,r)  =  \bigcap_{r\geq  0}  {\rm  Conf}_n(M,r)  =\emptyset,\\
 & \lim_{r\to+\infty}     {\rm  Conf}_n(M,r)/\Sigma_n  =  \bigcap_{r\geq  0}  {\rm  Conf}_n(M,r)/\Sigma_n  =\emptyset;
  \end{eqnarray*}
    \item
    the  $\Sigma_n$-action  on   $ {\rm  Conf}_n(M,r)$  is  equivariant,  i.e.    for  any   $0\leq  r  <s$,  the  diagram   commutes  
    \begin{eqnarray*}
   & \xymatrix{
     {\rm  Conf}_n(M,s) \ar[r]\ar[d]_{\Sigma_n}  & {\rm  Conf}_n(M,r)\ar[d]^{\Sigma_n}\\
         {\rm  Conf}_n(M,s)/\Sigma_n \ar[r]   & {\rm  Conf}_n(M,r)/\Sigma_n 
    }
    \end{eqnarray*}
    which  implies for  any   $0\leq  r  <s$,  the  diagram   commutes  
\begin{eqnarray*}
  &    \xymatrix{
     {\rm  Conf}_n(M,s) \ar[r]\ar[d]_{\pi_n}  & {\rm  Conf}_n(M,r)\ar[d]^{\pi_n}\\
         {\rm  Conf}_n(M,s)/\Sigma_n \ar[r]   & {\rm  Conf}_n(M,r)/\Sigma_n. 
    }
    \end{eqnarray*}
    Here  in  the  last two  diagrams,  the  horizontal  maps  are   canonical  inclusions.  
 \end{enumerate}
 In  particular,  
 \begin{enumerate}[(i)]
 \item
 Let  $M=V$  be  a  discrete  set.  
Then for  any  $p,q\in  V$,  we  have   $d(p,q)=0$  if  $p=q$  and  $d(p,q)=+\infty$  if  $p\neq q$. 
The  filtrations  (\ref{eq-099a})   are  trivial such  that   both    $ {\rm  Conf}_n(V,r)$  
and  $  {\rm  Conf}_n(V,r)/\Sigma_n$  do  not  depend  on  $r$  for  all  real  numbers  $r\geq  0$.   

 \item
 Let  $M=S^1$  be  the  unit  sphere.  Then   
 $  {\rm  Conf}_n(S^1,r)/\Sigma_n$  is   the  collection   of   all    the     sets 
    $\{e^{i\theta_1}, \ldots,  e^{i\theta_n}\}$,  where  the  order  is  not  considered,  
  such  that  $0\leq  \theta_1<\cdots  <  \theta_n<2\pi$  and   $\theta_{i+1}-\theta_i>2r$ for  
  any  $i=1,\ldots, n-1$.  
 In  particular,    $  {\rm  Conf}_n(S^1,r)/\Sigma_n=\emptyset$  if  $r\geq    \pi / n $.

 \item
 Let  $M=S^m$  be the  unit  $m$-sphere.  
 Then   the  directed  $n$-uniform  hypergraph  
 \begin{eqnarray*}
 \bigcap_{r<  \pi/2}{\rm  Conf}_n(S^m,r)= \{(x_1,\ldots,x_n)\in  \prod_n  S^m \mid  d(x_i,    x_j)\geq   \pi {\rm  ~for~any~}i\neq  j\}   
 \end{eqnarray*}
   is  the  collection  of  all  the  ordered  pairs  of  antipodal  points  in  $S^m$   if   $n=2$   (note  that  
  $d(p,q)\leq  \pi$  for  any  $p,q\in  S^m$  and  the  equality  holds  iff  $p,q$ are  antipodal)  and  is  $\emptyset$  if  $n\geq 3$.      The  $n$-uniform  hypergraph   
    \begin{eqnarray*}
 \bigcap_{r<  \pi/2}{\rm  Conf}_n(S^m,r)/ \Sigma_n  
 \end{eqnarray*}
 is   $\mathbb{R}P^m$  if  $n=2$  and  is  $\emptyset$  if  $n\geq  3$.  
 \end{enumerate}
 \end{example}

\section{Morphisms  of    configuration  spaces  and  hypergraphs }\label{sec5}

In  this section,  we  consider  morphisms  of  configuration  spaces  and  hypergraphs.  
We  construct  some  automorphism  groups  for  configuration  spaces  and  hypergraphs.

 Let $\varphi:  M\longrightarrow  M$ be   
      any    map  which  may  not  be  assumed   continuous.     
      Then  $\varphi$    induces   a    map 
          \begin{eqnarray}\label{eq-1.00a2}
    {\rm  Conf}(\varphi)/\Sigma_\bullet:  \bigcup_{n=1}^\infty  {\rm  Conf}_n(M)/\Sigma_n \longrightarrow  \bigcup_{n=1}^\infty  {\rm  Conf}_n(M)/\Sigma_n
    \end{eqnarray}
   sending   $\{x_1,\ldots,x_n\}\in   {\rm  Conf}_n(M)/\Sigma_n$
   to   $\{\varphi(x_1),\ldots,\varphi(x_n)\}\in   {\rm  Conf}_k(M)/\Sigma_k$,  
   where   $1\leq  k\leq  n$  is  the  number  of   distinct  points  in  $\varphi(x_1),\ldots,\varphi(x_n)$.  
    We  regard    
    the  configuration  spaces 
    ${\rm  Conf}_k(M)/\Sigma_k$  and  ${\rm  Conf}_{l}(M)/\Sigma_l$   to  be   disjoint  
    for  $k\neq  l$.   Note  that  even  if  $\varphi$  is  continuous,  ${\rm  Conf}(\varphi)/\Sigma_\bullet$  may  not  be  continuous.   
       Let  $\mathcal{H}(M)$   and  $\mathcal{H}'(M)$   be  two  hypergraphs  on  $M$ 
    such  that  the  restriction  of   ${\rm  Conf}(\varphi)/\Sigma_\bullet$  to  $\mathcal{H}(M)$  induces  a  map  
    \begin{eqnarray}\label{eq-1.00a1}
    {\rm  Conf}(\varphi)/\Sigma_\bullet:  \mathcal{H}(M)\longrightarrow  \mathcal{H}'(M). 
    \end{eqnarray}
    We  call  (\ref{eq-1.00a1})  a  {\it  morphism}  of  hypergraphs  on  $M$  induced  by  $\varphi$ 
    and  simply  abbreviate   (\ref{eq-1.00a1})   as  
    \begin{eqnarray}\label{eq-1.00a90}
    \varphi: \mathcal{H}(M)\longrightarrow  \mathcal{H}'(M) 
    \end{eqnarray}
      if there is no ambiguity.

      Suppose  in  addition  that  
     $\varphi:  M\longrightarrow  M$  is  injective.  
     Then  (\ref{eq-1.00a2})  is  a  graded  injective  map  sending  each  
    ${\rm  Conf}_n(M)/\Sigma_n$  into  ${\rm  Conf}_n(M)/\Sigma_n$.
     Thus  (\ref{eq-1.00a1})   or  (\ref{eq-1.00a90})   is   a  graded  injective  map  sending  each  
    $\mathcal{H}_n(M)$  into  $\mathcal{H}'_n(M)$.  
        Moreover,  $\varphi$  induces  a   graded  injective  map
    \begin{eqnarray}\label{eq-99.7ab}
    {\rm  Conf}(\varphi):    \bigcup_{n=1}^\infty  {\rm  Conf}_n(M) \longrightarrow  \bigcup_{n=1}^\infty  {\rm  Conf}_n(M) 
    \end{eqnarray}
    sending   $ (x_1,\ldots,x_n)\in   {\rm  Conf}_n(M)$
   to   $(\varphi(x_1),\ldots,\varphi(x_n))\in   {\rm  Conf}_n(M)$.  
   The  following  diagram  commutes  
   \begin{eqnarray*}
   \xymatrix{
   {\rm  Conf}_n(M)  \ar[rr]  ^-{  {\rm  Conf}(\varphi)}  \ar[d]_{\pi_n}  && {\rm  Conf}_n(M)  \ar[d]^{\pi_n} \\
    {\rm  Conf}_n(M)/\Sigma_n \ar[rr]^-{{\rm  Conf}(\varphi)/\Sigma_n } && {\rm  Conf}_n(M) /\Sigma_n. 
   }
   \end{eqnarray*}
    Let  $\vec{\mathcal{H}}(M)$   and  $\vec{\mathcal{H}}'(M)$   be  two  hyperdigraphs  on  $M$ 
    such  that  the  restriction  of   ${\rm  Conf}(\varphi)$  to  $\vec{\mathcal{H}}(M)$  induces  a  map  
    \begin{eqnarray}\label{eq-27.00a1}
    {\rm  Conf}(\varphi):  \vec{\mathcal{H}}(M)\longrightarrow  \vec{\mathcal{H}}'(M). 
    \end{eqnarray}
   Then  (\ref{eq-27.00a1})  is  injective.  
     We  call  (\ref{eq-27.00a1})  a  {\it  morphism}  of  hyperdigraphs  on  $M$  induced  by  $\varphi$ 
    and  simply  abbreviate   (\ref{eq-27.00a1})   as  
    \begin{eqnarray}\label{eq-27.00a90}
    \varphi: \vec{\mathcal{H}}(M)\longrightarrow \vec{ \mathcal{H}}'(M) 
    \end{eqnarray}
      if there is no ambiguity.  
   Suppose  $\mathcal{H}_n(M)=\pi_n(\vec{\mathcal{H}}_n(M))$  and   $\mathcal{H}'_n(M)=\pi_n(\vec{\mathcal{H}}'_n(M))$   for  each  $n\in \mathbb{N}$.  
   Then 
      \begin{eqnarray*}
   \vec{\mathcal{H}}_n(M)\subseteq  \pi_n^{-1}(\mathcal{H}_n(M)),~~~~~~
   \vec{\mathcal{H}'}_n(M)\subseteq  \pi_n^{-1}(\mathcal{H}'_n(M))   
   \end{eqnarray*}
   and  
    the  following  diagram  commutes  
   \begin{eqnarray*}
   \xymatrix{
 \vec{\mathcal{H}}_n(M)    \ar[r]  ^{  \varphi}  \ar[d]_{\pi_n}  & \vec{\mathcal{H}}'_n(M)  \ar[d]^{\pi_n} \\
   \mathcal{H}_n(M)   \ar[r]^{\varphi } & \mathcal{H}'_n(M).       
   }
   \end{eqnarray*}
  Furthermore,    if  $\varphi$  is  continuous  (resp.  smooth),  then  all  the  four  maps  
         (\ref{eq-1.00a2}),     (\ref{eq-1.00a90}),  (\ref{eq-99.7ab})  and  (\ref{eq-27.00a90})    are   continuous (resp.  smooth).

The  homeomorphism  group  ${\rm  Homeo}(M)$  is  the   group  
consisting  of  all  the  homeomorphisms  $ \varphi$  from  $M$  to itself.  
For  any  $\varphi\in  {\rm  Homeo}(M)$,  
the induced  maps   
 (\ref{eq-1.00a2})  
 and    
 (\ref{eq-99.7ab})   are       homeomorphisms.  
 Consequently,  we  have  induced   group  actions  of  ${\rm  Homeo}(M)$   on  
both  $ \bigcup_{n=1}^\infty  {\rm  Conf}_n(M)/\Sigma_n$ 
and        $ \bigcup_{n=1}^\infty  {\rm  Conf}_n(M)$.  
  A  homeomorphism  $\varphi\in {\rm  Homeo}(M)$  is  an  {\it  isometry}  if 
   $d(p,q)=  d(\varphi(p),\varphi(q))$   for  any  
   $p,q\in  M$,  where  $d$  is  the  geodesic  distance  on  $M$.  
The  isometry group  ${\rm  Isom}(M)$  is  the group
 consisting  of  all the  isometries  from    
   $M$  to  itself,  which  is  a  subgroup  of  ${\rm  Homeo}(M)$.

For  any  hypergraph  $\mathcal{H}(M)$  on  $M$, 
we  define  
the  homeomorphism  group  ${\rm  Homeo}(\mathcal{H}(M))$
to be   the  subgroup  of    ${\rm  Homeo}(M)$   consisting   of   $\varphi\in {\rm  Homeo}(M)$ 
 such that  $\varphi$  induces  a  morphism  from  $\mathcal{H}(M)$  to  itself   by  (\ref{eq-1.00a1})
    and   define  
   the  isometry  group  ${\rm  Isom}(\mathcal{H}(M))$
to  be  the  subgroup  of   ${\rm  Isom}(M)$  consisting  of   
 $\varphi\in  {\rm  Isom}(M)$   such that  $\varphi$  induces  a  morphism  from  $\mathcal{H}(M)$  to  itself   by  (\ref{eq-1.00a1}).  
 For  any  hyperdigraph  $\vec{\mathcal{H}}(M)$  on  $M$, 
  we  define  
the  homeomorphism  group  ${\rm  Homeo}(\vec{\mathcal{H}}(M))$
to be   the  subgroup  of    ${\rm  Homeo}(M)$   consisting   of   $\varphi\in {\rm  Homeo}(M)$ 
 such that  $\varphi$  induces  a  morphism  from  $\vec{\mathcal{H}}(M)$  to  itself   by  (\ref{eq-99.7ab}) 
   and   define  
   the  isometry  group  ${\rm  Isom}(\vec{\mathcal{H}}(M))$
to  be  the  subgroup  of   ${\rm  Isom}(M)$  consisting  of   
 $\varphi\in  {\rm  Isom}(M)$   such that  $\varphi$  induces  a  morphism  from  $\vec{\mathcal{H}}(M)$  to  itself   by  (\ref{eq-99.7ab}).  
  Then     we  have  a   group  action  of  ${\rm  Homeo}({\mathcal{H}}(M))$   on  
 ${\mathcal{H}}(M)$   and  a   group  action  of  ${\rm  Homeo}(\vec{\mathcal{H}}(M))$   on  
 $\vec{\mathcal{H}}(M)$.  
Note  that   ${\rm  Isom}({\mathcal{H}}(M))$  is  a  subgroup  of     ${\rm  Homeo}({\mathcal{H}}(M))$   and   
${\rm  Isom}(\vec{\mathcal{H}}(M))$  is  a  subgroup  of     ${\rm  Homeo}(\vec{\mathcal{H}}(M))$.  
  Moreover,  
 if  ${\mathcal{H}}(M)=\pi_\bullet(\vec{\mathcal{H}}(M))$, 
   then  $ {\rm  Homeo}(\vec{\mathcal{H}}(M))$   is   a   normal  
   subgroup  of   $ {\rm  Homeo}({\mathcal{H}}(M))$  
   and   $ {\rm  Isom}(\vec{\mathcal{H}}(M))$   is   a   normal  
   subgroup  of   $ {\rm  Isom}({\mathcal{H}}(M))$. 

    Let  $\sigma\in\mathcal{H}(M)$.  We  define  the  {\it  stabilizer}  of  $\sigma$,  
denote  as  ${\rm  Stab}(\sigma)$,  
  to  be  the   normal   subgroup  of   $ {\rm  Homeo}(M) $ 
 consisting  of  all   the  homeomorphisms  $\varphi\in {\rm  Homeo}(M) $   such  that  
 $\varphi(\sigma)=\sigma$.  
  We  define  the  {\it  stabilizer}  of  $\mathcal{H}(M)$,   
  denote  as   ${\rm  Stab}(\mathcal{H}(M))$,  
 to  be  the   normal  subgroup  of   $ {\rm  Homeo}(M) $  given  by  the  intersection  
   \begin{eqnarray*}
   {\rm  Stab}(\mathcal{H}(M)) =  \bigcap  _{\sigma\in\mathcal{H}(M) }  {\rm  Stab}(\sigma).  
   \end{eqnarray*}
   Then  $ {\rm  Stab}(\mathcal{H}(M))$  is   the  normal  subgroup  
   of  $ {\rm  Homeo}(\mathcal{H}(M)) $   consisting   of  $\varphi $ 
   such  that  $\varphi(\sigma)=\sigma$  for  any  $\sigma\in \mathcal{H}(M)$.

   Similarly,  let  $\vec{\sigma}\in\vec{\mathcal{H}}(M)$.  
   We  define  the  {\it  stabilizer}  of  $\vec\sigma$,  
denote  as  ${\rm  Stab}(\vec\sigma)$,  
  to  be  the   normal  subgroup  of   $ {\rm  Homeo}(M) $ 
 consisting  of  all   the  homeomorphisms  $\varphi\in {\rm  Homeo}(M) $   such  that  
 $\varphi(\vec\sigma)=\vec\sigma$.  
 Note  that  if  $\vec\sigma=(x_1,\ldots,x_n)$,   then  
 \begin{eqnarray*}
 {\rm  Stab}(\vec\sigma) =\bigcap_{i=1}^n {\rm  Stab}(x_i) 
 \end{eqnarray*}
 where  ${\rm  Stab}(x_i) $  is  the  group  of  all    $\varphi\in {\rm  Homeo}(M) $   such  that  
 $\varphi(x_i)=x_i$.  
 Hence  
 $ {\rm  Stab}(\vec\sigma)$  is  a  normal   subgroup  of  
  ${\rm  Stab}(\sigma)$
   where  $\sigma=\pi_\bullet(\vec\sigma)$.  
  We  define  the  {\it  stabilizer}  of  $\vec{\mathcal{H}}(M)$,   
  denote  as   ${\rm  Stab}(\vec{\mathcal{H}}(M))$,  
 to  be  the   normal   subgroup  of   $ {\rm  Homeo}(M) $  given  by  the  intersection  
   \begin{eqnarray*}
   {\rm  Stab}(\vec{\mathcal{H}}(M)) =  \bigcap  _{\vec\sigma\in\vec{\mathcal{H}}(M) }  {\rm  Stab}(\vec\sigma).  
   \end{eqnarray*}
   Then  $ {\rm  Stab}(\vec{\mathcal{H}}(M))$  is   the  normal   subgroup  
   of  $ {\rm  Homeo}(\vec{\mathcal{H}}(M)) $   consisting   of  $\varphi$ 
   such  that  $\varphi(\vec\sigma)=\vec\sigma$  for  any  $\vec\sigma\in \vec{\mathcal{H}}(M)$.  
   Note  that  if  ${\mathcal{H}}(M)=\pi_\bullet(\vec{\mathcal{H}}(M))$, 
   then  $ {\rm  Stab}(\vec{\mathcal{H}}(M))$   is   a   normal  
   subgroup  of   $ {\rm  Stab}({\mathcal{H}}(M))$. 
   
   \begin{lemma}
   \label{pr-3-vq1}
   For  any  hyperdigraph  $\vec{\mathcal{H}}(M)$    and  any  hypergraph $\mathcal{H}(M)$
   on  $M$,   
   \begin{enumerate}[(1)]
   \item
    the  intersection  $  {\rm  Stab}(\mathcal{H}(M))\cap  {\rm  Isom}(\mathcal{H}(M))$  is  a  
    normal  subgroup  of  ${\rm  Isom}(\mathcal{H}(M))$; 
     \item
    the  intersection  $  {\rm  Stab}(\vec{\mathcal{H}}(M))\cap  {\rm  Isom}(\vec{\mathcal{H}}(M))$  is  a  
    normal  subgroup  of  ${\rm  Isom}(\vec{\mathcal{H}}(M))$;
    \item   
    if     $\pi_\bullet(\vec{\mathcal{H}}(M))=\mathcal{H}(M)$,  then   $  {\rm  Stab}(\vec{\mathcal{H}}(M))\cap  {\rm  Isom}(\vec{\mathcal{H}}(M))$  
    is  a  normal  subgroup  of  $  {\rm  Stab}(\mathcal{H}(M))\cap  {\rm  Isom}(\mathcal{H}(M))$.  
    \end{enumerate}  
   \end{lemma}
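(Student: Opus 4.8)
The plan is to reduce all three parts to one elementary group-theoretic principle and then supply it with the normality relations already recorded in the preceding discussion. The principle is: if $A',A,B',B$ are subgroups of a common group $\Gamma$ with $A'\trianglelefteq A$ and $B'\trianglelefteq B$, then $A'\cap B'\trianglelefteq A\cap B$. I would verify this once and directly: for $g\in A\cap B$ and $x\in A'\cap B'$, the conjugate $gxg^{-1}$ lies in $A'$ (because $g\in A$, $x\in A'$, $A'\trianglelefteq A$) and simultaneously in $B'$ (by the symmetric reasoning), hence in $A'\cap B'$. Specialising to $B'=B$ recovers the familiar statement that $N\cap H\trianglelefteq H$ whenever $N\trianglelefteq G$ and $H\leq G$.

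For part (1) I would take $\Gamma={\rm Homeo}(M)$, $A={\rm Homeo}(\mathcal{H}(M))$, $A'={\rm Stab}(\mathcal{H}(M))$ and $B=B'={\rm Isom}(\mathcal{H}(M))$. Here $A'\trianglelefteq A$ is exactly the statement, recorded above, that ${\rm Stab}(\mathcal{H}(M))$ is a normal subgroup of ${\rm Homeo}(\mathcal{H}(M))$, while the noted inclusion ${\rm Isom}(\mathcal{H}(M))\subseteq{\rm Homeo}(\mathcal{H}(M))$ gives $A\cap B={\rm Isom}(\mathcal{H}(M))$; the principle then yields the claim. Part (2) is literally the same argument with $\mathcal{H}(M)$ replaced by $\vec{\mathcal{H}}(M)$, using that ${\rm Stab}(\vec{\mathcal{H}}(M))\trianglelefteq{\rm Homeo}(\vec{\mathcal{H}}(M))$ and ${\rm Isom}(\vec{\mathcal{H}}(M))\subseteq{\rm Homeo}(\vec{\mathcal{H}}(M))$. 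For part (3) I would instead feed the principle the two genuinely distinct normal pairs ${\rm Stab}(\vec{\mathcal{H}}(M))\trianglelefteq{\rm Stab}(\mathcal{H}(M))$ and ${\rm Isom}(\vec{\mathcal{H}}(M))\trianglelefteq{\rm Isom}(\mathcal{H}(M))$, both of which are the normality relations asserted earlier precisely under the hypothesis $\pi_\bullet(\vec{\mathcal{H}}(M))=\mathcal{H}(M)$; intersecting gives ${\rm Stab}(\vec{\mathcal{H}}(M))\cap{\rm Isom}(\vec{\mathcal{H}}(M))\trianglelefteq{\rm Stab}(\mathcal{H}(M))\cap{\rm Isom}(\mathcal{H}(M))$.

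Because the proof is assembled entirely from cited inputs, there is no hard computational step; the only care needed is to invoke, for the stabilizer factor, its normality inside the \emph{morphism group} of the relevant (di)graph rather than inside ${\rm Homeo}(M)$, since that is the ambient group in which the conjugation genuinely lands back in the stabilizer. Should one want a self-contained proof that avoids quoting those normality facts, the main obstacle would become the direct conjugation check: for $g$ in the larger group and $\varphi$ in the stabilizer part, one must show $g\varphi g^{-1}$ still fixes every (directed) hyperedge $\sigma$, which follows by writing $g\varphi g^{-1}(\sigma)=g(\varphi(g^{-1}\sigma))=g(g^{-1}\sigma)=\sigma$ and using that $g^{-1}$ again belongs to the morphism group, so that $g^{-1}\sigma$ is a hyperedge fixed by $\varphi$.
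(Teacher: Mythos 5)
Your intersection principle is correct as stated, and for parts (1) and (2) your argument is essentially the paper's own: the paper verifies that ${\rm Stab}(\mathcal{H}(M))$ is normal in ${\rm Homeo}(\mathcal{H}(M))$ (resp. ${\rm Stab}(\vec{\mathcal{H}}(M))$ in ${\rm Homeo}(\vec{\mathcal{H}}(M))$) and then invokes the isomorphism theorem, which is exactly your principle in the degenerate case $B'=B={\rm Isom}(\mathcal{H}(M))$.

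Part (3), however, has a genuine gap: you feed the principle the relation ${\rm Isom}(\vec{\mathcal{H}}(M))\trianglelefteq{\rm Isom}(\mathcal{H}(M))$. This relation is indeed asserted in the paper's discussion preceding the lemma, but it is false in general, and the paper's own proof of (3) is careful not to use it. Concretely, take $M=V=\{1,2,3,4\}$ discrete, so that ${\rm Isom}(V)={\rm Homeo}(V)$ is the full symmetric group, and take $\vec{\mathcal{H}}(V)=\{(1,2),(3,4),(4,3)\}$, so that $\mathcal{H}(V)=\pi_\bullet(\vec{\mathcal{H}}(V))=\{\{1,2\},\{3,4\}\}$. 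A bijection $\varphi$ preserves $\vec{\mathcal{H}}(V)$ iff $\varphi(1)=1$, $\varphi(2)=2$ and $\varphi$ preserves $\{3,4\}$, so ${\rm Isom}(\vec{\mathcal{H}}(V))=\{{\rm id},(3\,4)\}$, whereas ${\rm Isom}(\mathcal{H}(V))$ is the dihedral group of order $8$ computed in the paper's own example. Conjugating $(3\,4)$ by $(1\,3)(2\,4)\in{\rm Isom}(\mathcal{H}(V))$ yields $(1\,2)$, which sends $(1,2)$ to $(2,1)\notin\vec{\mathcal{H}}(V)$; hence ${\rm Isom}(\vec{\mathcal{H}}(V))$ is not normal in ${\rm Isom}(\mathcal{H}(V))$. (The conclusion of (3) survives in this example only because ${\rm Stab}(\vec{\mathcal{H}}(V))\cap{\rm Isom}(\vec{\mathcal{H}}(V))$ is trivial.)

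The repair is the paper's route, and it still fits your principle after one observation: an isometry that fixes every (directed) hyperedge automatically maps the hyper(di)graph into itself, so ${\rm Stab}(\vec{\mathcal{H}}(M))\cap{\rm Isom}(\vec{\mathcal{H}}(M))={\rm Stab}(\vec{\mathcal{H}}(M))\cap{\rm Isom}(M)$ and ${\rm Stab}(\mathcal{H}(M))\cap{\rm Isom}(\mathcal{H}(M))={\rm Stab}(\mathcal{H}(M))\cap{\rm Isom}(M)$. Now apply your principle with $A'={\rm Stab}(\vec{\mathcal{H}}(M))\trianglelefteq A={\rm Stab}(\mathcal{H}(M))$, which is the normality relation that is actually true under $\pi_\bullet(\vec{\mathcal{H}}(M))=\mathcal{H}(M)$ (if $g$ fixes each $\sigma\in\mathcal{H}(M)$ setwise and $\varphi$ fixes each $\vec\sigma\in\vec{\mathcal{H}}(M)$ coordinatewise, then $g^{-1}$ merely permutes the underlying points of each $\vec\sigma$, so $\varphi$ fixes them and $g\varphi g^{-1}(\vec\sigma)=\vec\sigma$), and with $B'=B={\rm Isom}(M)$. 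This gives precisely (3). For the same reason, the direct-verification sketch in your last paragraph needs adjusting in the directed case: for $g\in{\rm Stab}(\mathcal{H}(M))\cap{\rm Isom}(\mathcal{H}(M))$ the tuple $g^{-1}(\vec\sigma)$ need not lie in $\vec{\mathcal{H}}(M)$, and it is fixed by $\varphi$ not because it is a directed hyperedge but because its coordinates agree with those of $\vec\sigma$ as a set.
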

   
   \begin{proof}
It  is  direct  to  verify  that    $  {\rm  Stab}(\mathcal{H}(M))$ 
   is  a  normal  subgroup  of  ${\rm  Homeo}(\mathcal{H}(M))$  
   and  $  {\rm  Stab}(\vec{\mathcal{H}}(M))$ 
   is  a  normal  subgroup  of  ${\rm  Homeo}(\vec{\mathcal{H}}(M))$.    Thus  by   the 
      isomorphism  theorem,  we  obtain  (1)  and  (2).

 Suppose 
 ${\mathcal{H}}(M)=\pi_\bullet(\vec{\mathcal{H}}(M))$.   
   It  is  direct  to  verify  that  $ {\rm  Stab}(\vec{\mathcal{H}}(M))$   is   a   normal  
   subgroup  of   $ {\rm  Stab}({\mathcal{H}}(M))$.   
   Note  that 
   \begin{eqnarray*}
     {\rm  Stab}(\vec{\mathcal{H}}(M))\cap  {\rm  Isom}(\vec{\mathcal{H}}(M))&=& {\rm  Stab}(\vec{\mathcal{H}}(M))\cap  {\rm  Isom}(M),\\
          {\rm  Stab}({\mathcal{H}}(M))\cap  {\rm  Isom}({\mathcal{H}}(M))&=& {\rm  Stab}({\mathcal{H}}(M))\cap  {\rm  Isom}(M).  
     \end{eqnarray*}  
 Since ${\rm  Stab}(\vec{\mathcal{H}}(M))\cap  {\rm  Isom}(M)$  
 is  a  normal  subgroup  of  ${\rm  Stab}({\mathcal{H}}(M))\cap  {\rm  Isom}(M)$,  
 we  obtain  (3).  
   \end{proof}

   We  define  the {\it  automorphism group}  of  
   $\mathcal{H}(M)$  to  be  the  quotient  group    
   \begin{eqnarray*}
   {\rm  Aut}(\mathcal{H}(M))={\rm  Homeo}(\mathcal{H}(M))/{\rm  Stab}(\mathcal{H}(M))   
   \end{eqnarray*}
   and   with the  help  of    Lemma~\ref{pr-3-vq1}~(1)   define  the  {\it  isometric  automorphism group}  of  
   $\mathcal{H}(M)$  to  be  the  quotient  group    
   \begin{eqnarray*}
   {\rm   Aut}_{\rm  Isom}(\mathcal{H}(M))={\rm  Isom}(\mathcal{H}(M))/
   ({\rm  Stab}(\mathcal{H}(M))\cap   {\rm  Isom}(\mathcal{H}(M))).  
   \end{eqnarray*}
  By  the 
     isomorphism  theorem, $ {\rm   Aut}_{\rm  Isom}(\mathcal{H}(M))$  is  a  normal  
   subgroup  
   of  $ {\rm  Aut}(\mathcal{H}(M))$.   
   Similarly,   We  define  the {\it  automorphism group}  of  
   $\vec{\mathcal{H}}(M)$  to  be  the  quotient  group    
   \begin{eqnarray*}
   {\rm  Aut}(\vec{\mathcal{H}}(M))={\rm  Homeo}(\vec{\mathcal{H}}(M))/{\rm  Stab}(\vec{\mathcal{H}}(M))   
   \end{eqnarray*}
   and   with the  help  of    Lemma~\ref{pr-3-vq1}~(2)   define  the  {\it  isometric  automorphism group}  of  
   $\vec{\mathcal{H}}(M)$  to  be  the  quotient  group    
   \begin{eqnarray*}
   {\rm   Aut}_{\rm  Isom}(\vec{\mathcal{H}}(M))={\rm  Isom}(\vec{\mathcal{H}}(M))/
   ({\rm  Stab}(\vec{\mathcal{H}}(M))\cap   {\rm  Isom}(\vec{\mathcal{H}}(M))).  
   \end{eqnarray*}
  By  the 
     isomorphism  theorem, $ {\rm   Aut}_{\rm  Isom}(\vec{\mathcal{H}}(M))$  is  a  normal  
   subgroup  
   of  $ {\rm  Aut}(\vec{\mathcal{H}}(M))$.   
   
   \begin{proposition}\label{pr-3.maq}
   \begin{enumerate}[(1)]
   \item
   The  group  action  of   $   {\rm  Aut}(\mathcal{H}(M))$     on  $\mathcal{H}(M)$  is   faithful;
   \item
  The  group  action  of   $   {\rm  Aut}(\vec{\mathcal{H}}(M))$     on  $\vec{\mathcal{H}}(M)$  is   faithful;

   \item
    if     $\pi_\bullet(\vec{\mathcal{H}}(M))=\mathcal{H}(M)$,  then   
    for  any  $\varphi\in  {\rm  Aut}(\vec{\mathcal{H}}(M))$  there  exists  
    a  unique  $\pi_\bullet(\varphi)\in  {\rm  Aut}(\mathcal{H}(M))$   
    such  that  the  diagram  commutes 
    \begin{eqnarray*}
    \xymatrix{
    \vec{\mathcal{H}}(M)\ar[r]^-{\varphi}\ar[d]_-{\pi_\bullet} & \vec{\mathcal{H}}(M)\ar[d]^-{\pi_\bullet}\\
    \mathcal{H}(M)\ar[r]^-{\pi_\bullet(\varphi)} & \mathcal{H}(M). 
    }
    \end{eqnarray*}
    The  induced  homomorphism  
    \begin{eqnarray}\label{eq-oa7cp1}
    \pi_\bullet:  {\rm  Aut}(\vec{\mathcal{H}}(M))
    \longrightarrow  {\rm  Aut}({\mathcal{H}}(M)) 
    \end{eqnarray}
      sending  $\varphi$  to  $\pi_\bullet(\varphi)$ 
     is  surjective.  
   \end{enumerate}
   \end{proposition}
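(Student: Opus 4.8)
The plan is to settle parts (1) and (2) formally from the coset definitions of the automorphism groups, then to build the homomorphism of part (3) out of the inclusion of homeomorphism groups, leaving the surjectivity of that homomorphism as the one substantive point. For part (1) I would describe the action of $[\varphi]\in{\rm Aut}(\mathcal{H}(M))={\rm Homeo}(\mathcal{H}(M))/{\rm Stab}(\mathcal{H}(M))$ on $\mathcal{H}(M)$ by $[\varphi]\cdot\sigma=\varphi(\sigma)$, where $\varphi(\sigma)$ is computed through the morphism (\ref{eq-1.00a1}); this is well defined on cosets because two representatives differ by an element of ${\rm Stab}(\mathcal{H}(M))$, which fixes every $\sigma$ by definition. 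The kernel of the action consists of the classes $[\varphi]$ with $\varphi(\sigma)=\sigma$ for all $\sigma\in\mathcal{H}(M)$, and this condition is exactly $\varphi\in{\rm Stab}(\mathcal{H}(M))$, i.e. $[\varphi]$ is the identity coset; hence the action is faithful. Part (2) is verbatim the same argument after replacing $\mathcal{H}(M)$, ${\rm Stab}(\mathcal{H}(M))$ and (\ref{eq-1.00a1}) by $\vec{\mathcal{H}}(M)$, ${\rm Stab}(\vec{\mathcal{H}}(M))$ and the morphism (\ref{eq-27.00a1}).

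For part (3) I would assume $\pi_\bullet(\vec{\mathcal{H}}(M))=\mathcal{H}(M)$ and first record the inclusion ${\rm Homeo}(\vec{\mathcal{H}}(M))\subseteq{\rm Homeo}(\mathcal{H}(M))$: if $\varphi$ preserves $\vec{\mathcal{H}}(M)$ and $\sigma=\pi_\bullet(\vec\sigma)$ with $\vec\sigma\in\vec{\mathcal{H}}(M)$, then the square relating $\pi_\bullet$ to ${\rm Conf}(\varphi)$ and ${\rm Conf}(\varphi)/\Sigma_\bullet$ gives $\varphi(\sigma)=\pi_\bullet(\varphi(\vec\sigma))\in\pi_\bullet(\vec{\mathcal{H}}(M))=\mathcal{H}(M)$, so $\varphi$ preserves $\mathcal{H}(M)$. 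Combined with the already noted containment ${\rm Stab}(\vec{\mathcal{H}}(M))\subseteq{\rm Stab}(\mathcal{H}(M))$, the composite ${\rm Homeo}(\vec{\mathcal{H}}(M))\hookrightarrow{\rm Homeo}(\mathcal{H}(M))\twoheadrightarrow{\rm Aut}(\mathcal{H}(M))$ kills ${\rm Stab}(\vec{\mathcal{H}}(M))$ and therefore descends to a homomorphism $\pi_\bullet:{\rm Aut}(\vec{\mathcal{H}}(M))\to{\rm Aut}(\mathcal{H}(M))$, which I take as the definition of $\pi_\bullet(\varphi)$. Commutativity of the displayed square is then built into this construction, and uniqueness follows because $\pi_\bullet:\vec{\mathcal{H}}(M)\to\mathcal{H}(M)$ is onto, so the square determines $\pi_\bullet(\varphi)$ on all of $\mathcal{H}(M)$, while part (1) guarantees that an element of ${\rm Aut}(\mathcal{H}(M))$ is determined by its action on $\mathcal{H}(M)$.

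The main obstacle will be the surjectivity of $\pi_\bullet$, i.e. showing that every class in ${\rm Aut}(\mathcal{H}(M))$ admits a representative preserving $\vec{\mathcal{H}}(M)$. The natural attempt begins with $\psi\in{\rm Homeo}(\mathcal{H}(M))$ and tries to show $\psi$ itself preserves $\vec{\mathcal{H}}(M)$: for $\vec\sigma\in\vec{\mathcal{H}}(M)$ one has $\pi_\bullet(\psi(\vec\sigma))=\psi(\pi_\bullet(\vec\sigma))\in\mathcal{H}(M)$, so $\psi(\vec\sigma)$ lands in the full preimage $\pi_\bullet^{-1}(\mathcal{H}(M))$. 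The delicate point is that this only controls the \emph{underlying unordered} hyperedge and not its ordering, so $\psi(\vec\sigma)$ need not lie in $\vec{\mathcal{H}}(M)$ itself. This step closes at once when $\vec{\mathcal{H}}(M)=\pi_\bullet^{-1}(\mathcal{H}(M))$ — in particular when $\vec{\mathcal{H}}(M)$ is $\Sigma_\bullet$-invariant, which is the situation relevant to the regular-embedding application — since then $\psi\in{\rm Homeo}(\vec{\mathcal{H}}(M))$ and $\pi_\bullet([\psi])=[\psi]$. Otherwise I would correct $\psi$ within its ${\rm Stab}(\mathcal{H}(M))$-coset, reordering the points inside the hyperedges so that the corrected homeomorphism carries each $\vec\sigma\in\vec{\mathcal{H}}(M)$ into $\vec{\mathcal{H}}(M)$; producing such a correction is precisely the crux of the proof and is the place where the compatibility of $\vec{\mathcal{H}}(M)$ with $\mathcal{H}(M)$ must be exploited in full.
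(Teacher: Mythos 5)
Your parts (1) and (2) are correct and coincide in substance with the paper's argument (the paper phrases the same computation as a contradiction: if $\varphi_1,\varphi_2$ agree on every hyperedge then $\varphi_2^{-1}\varphi_1\in{\rm Stab}(\mathcal{H}(M))$, hence $\varphi_1=\varphi_2$ in the quotient). Your construction of $\pi_\bullet$ in part (3), by descending the inclusion ${\rm Homeo}(\vec{\mathcal{H}}(M))\subseteq{\rm Homeo}(\mathcal{H}(M))$ modulo the containment ${\rm Stab}(\vec{\mathcal{H}}(M))\subseteq{\rm Stab}(\mathcal{H}(M))$, is also equivalent to the paper's well-definedness argument, and your uniqueness argument (surjectivity of $\pi_\bullet$ on hyperedges plus faithfulness from part (1)) is actually more explicit than what the paper writes.

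The genuine gap is surjectivity of (\ref{eq-oa7cp1}), which you leave unresolved: you never produce the promised ``correction'' of $\psi$ within its ${\rm Stab}(\mathcal{H}(M))$-coset. You should know, however, that the step you decline to take on faith is exactly the step the paper takes on faith: given a representative $\psi$ of a class in ${\rm Aut}(\mathcal{H}(M))$, the paper defines $\tilde\psi(x_1,\ldots,x_n)=(\psi(x_1),\ldots,\psi(x_n))$ and simply asserts that $\tilde\psi$ maps $\vec{\mathcal{H}}(M)$ to itself, with no justification; that assertion is precisely your unproved claim, and it only controls the underlying unordered hyperedge, as you observe. Moreover, your worry cannot be repaired in general: no coset correction need exist. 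Take $M=V=\{a,b,c,d\}$ discrete and $\vec{\mathcal{H}}=\{(a,b),(c,d),(d,c)\}$, so that $\pi_\bullet(\vec{\mathcal{H}})=\mathcal{H}=\{\{a,b\},\{c,d\}\}$. By the paper's own Example in Section 5, ${\rm Aut}(\mathcal{H})\cong\mathbb{Z}_2$, generated by the class of any bijection exchanging $\{a,b\}$ with $\{c,d\}$. But every such bijection sends one of $(c,d),(d,c)$ to $(b,a)\notin\vec{\mathcal{H}}$, so ${\rm Homeo}(\vec{\mathcal{H}})$ consists only of the identity and the transposition $c\leftrightarrow d$, both of which lie in ${\rm Stab}(\mathcal{H})$; hence the image of $\pi_\bullet$ in ${\rm Aut}(\mathcal{H})$ is trivial and (\ref{eq-oa7cp1}) is not surjective. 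So your proposal is incomplete at exactly the point where the paper's proof is flawed: surjectivity requires an additional hypothesis such as $\Sigma_\bullet$-invariance of $\vec{\mathcal{H}}(M)$ (equivalently $\vec{\mathcal{H}}(M)=\pi_\bullet^{-1}(\mathcal{H}(M))$), in which case the first, easy half of your own argument ($\psi$ itself preserves $\vec{\mathcal{H}}(M)$) already closes the proof.
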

   
   \begin{proof}
   (1)  
   Let  $\varphi_1,\varphi_2\in   {\rm  Aut}(\mathcal{H}(M))$  such  that  
   $\varphi_1\neq  \varphi_2$.  
  To  prove  (1),   it  suffices  to  prove there  exists  $\sigma\in  \mathcal{H}(M)$  such  that  
   $\varphi_1(\sigma)\neq  \varphi_2(\sigma)$.  
   Suppose  to the  contrary,     $\varphi_1(\sigma)=  \varphi_2(\sigma)$
   for  any  $\sigma\in  \mathcal{H}(M)$.  
   Then  $\varphi_2^{-1}\varphi_1(\sigma)=\sigma$   for  any  $\sigma\in  \mathcal{H}(M)$.  
   It  follows  that  $\varphi_2^{-1}\varphi_1\in  {\rm  Stab}(\mathcal{H}(M))$,  
   which  implies  $\varphi_1=\varphi_2$  in  ${\rm  Aut}(\mathcal{H}(M))$.  
   We get a  contradiction.

   (2)   The  proof  of  (2)  is  similar  with  (1).  The  difference  is  to   substitute  the  hyperedges  with  directed  hyperedges.

   (3)  Suppose     $\pi_\bullet(\vec{\mathcal{H}}(M))=\mathcal{H}(M)$.    
   Let  $\varphi:  M\longrightarrow  M$   be  a  representative  of  an  element 
     in  $   {\rm  Aut}(\vec{\mathcal{H}}(M))$. 
   Then   
    \begin{eqnarray}\label{eq-mma16}
   \varphi: (x_1,\ldots,x_n)  \mapsto (\varphi(x_1),\ldots,\varphi(x_n))   
   \end{eqnarray}
      gives  a  homeomorphism  from $\vec{\mathcal{H}}(M)$  to  itself.  
   Let  
     \begin{eqnarray}\label{eq-mma17}
   \pi_\bullet(\varphi): \{x_1,\ldots,x_n\}  \mapsto \{\varphi(x_1),\ldots,\varphi(x_n)\}.  
   \end{eqnarray}
   Then  
   $\pi_\bullet (\varphi)$  gives a  homeomorphism  from 
   $  {\mathcal{H}}(M)$  
   to  itself. 
   It  is  clear  that  the  element (\ref{eq-mma17})  in   ${\rm  Aut}(\mathcal{H}(M))$  
    does  not  depend  on 
   the  choice  of  the representative  $\varphi:  M\longrightarrow  M$  of 
   the  element  (\ref{eq-mma16}) in   $   {\rm  Aut}(\vec{\mathcal{H}}(M))$.  
    Thus   $\pi_\bullet(\varphi)\in  {\rm  Aut}(\mathcal{H}(M))$.   
    Hence (\ref{eq-oa7cp1})  is  well-defined.  
    Let   $\psi:  M\longrightarrow  M$  be  a  
    representative  of  an  element  in  ${\rm  Aut}(\mathcal{H}(M))$.   
    Then  
        \begin{eqnarray*} 
   \psi : \{x_1,\ldots,x_n\}  \mapsto \{\psi(x_1),\ldots,\psi(x_n)\}  
   \end{eqnarray*}
gives a  homeomorphism  from 
   $  {\mathcal{H}}(M)$  
   to  itself. 
   Let  
   \begin{eqnarray*}
   \tilde\psi:  (x_1,\ldots,x_n)  \mapsto (\psi(x_1),\ldots,\psi(x_n)). 
   \end{eqnarray*}
   Then  $\tilde \psi$  gives  a  homeomorphism from $\vec{\mathcal{H}}(M)$  to  itself.   
 Thus  $\tilde\psi$  represents  an  element  in   $   {\rm  Aut}(\vec{\mathcal{H}}(M))$
 such  that  $\pi_\bullet(\tilde \psi)=\psi$.  
 Therefore,  (\ref{eq-oa7cp1})  is  surjective.  
   \end{proof}
   
   \begin{example}
Let  $M=V$  be  a  discrete  set.  
Then  
\begin{eqnarray}\label{eq-2.01e}
{\rm  Homeo}(V)= {\rm  Isom}(V)=\{{\rm  bijections~}f:  V\longrightarrow  V   \}.  
\end{eqnarray}
In  addition,  if  $V$  is  finite, then  (\ref{eq-2.01e})  is  the  symmetric  group  $\Sigma_{|V|}$.  
Particularly,  take  $V= \{v_0,v_1,v_2,v_3\}$  as  an  example.  
\begin{enumerate}[(1)]
\item
Let  $\mathcal{H}=\{  \{v_1,v_2\},  \{v_0,v_2\}, \{v_0,v_1\},  \{v_0,v_1,v_2\}\}$.   
Then  
$
{\rm  Homeo}(\mathcal{H})= \Sigma_3
$
is  the  symmetric  group  on  $v_0,v_1,v_2$,  
$
{\rm  Stab}(\mathcal{H})= 1
$ 
is  the  identity  map  on  $V$,  and  consequently   ${\rm  Aut}(\mathcal{H})=\Sigma_3$ 
is  the  symmetric group  on   $\sigma_0=\{v_1,v_2\}$,   
$\sigma_1=\{v_0,v_2\}$  and  $\sigma_2=\{v_0,v_1\}$;   
\item
Let  
$\mathcal{H}= \{\{v_0,v_1\}, \{v_2,v_3\}\}$.   
Then  $
{\rm  Homeo}(\mathcal{H})
$  is  the   dihedral  group   of  order  $8$:   
\begin{eqnarray*}
&&\alpha= 
 {{v_0,v_1,v_2,v_3}\choose{v_3,v_2,v_0,v_1}},  ~~~~~~   \alpha^2= {{v_0,v_1,v_2,v_3}\choose{v_1,v_0,v_3,v_2}},~~~~~~\alpha^3={{v_0,v_1,v_2,v_3}\choose{v_2,v_3,v_1,v_0}},\\ 
 &&\beta=  {{v_0,v_1,v_2,v_3}\choose{v_1,v_0,v_2,v_3}},  ~~~~~~\alpha^2\beta=\beta\alpha^2={{v_0,v_1,v_2,v_3}\choose{v_0,v_1,v_3,v_2}},\\
 &&
 \alpha\beta=\beta\alpha^3={{v_0,v_1,v_2,v_3}\choose{v_2,v_3,v_0,v_1}},
 ~~~~~~\beta\alpha= \alpha^3\beta=  {{v_0,v_1,v_2,v_3}\choose{v_3,v_2,v_1,v_0}} 
\end{eqnarray*} 
where  $\alpha^4=\beta^2=1$  and   $\alpha\beta\alpha=\beta$.    
 Moreover,   ${\rm  Stab}(\mathcal{H})=\mathbb{Z}_2\oplus\mathbb{Z}_2
$ 
is  generated  by  the  two  permutations  $\alpha^2$  and $\beta$,  consisting  of  the  four  elements 
$1$,  $\alpha^2$,  $\beta$,  $\alpha^2\beta=\beta\alpha^2$.  
It  is  direct that  ${\rm  Stab}(\mathcal{H})$  is  a  normal  subgroup  of  ${\rm  Homeo}(\mathcal{H})$.  
 Consequently,   
 ${\rm  Aut}(\mathcal{H})=\mathbb{Z}_2$  is  generated  by  a   permutation  
 \begin{eqnarray*}
 {\sigma_1,\sigma_2}\choose{\sigma_2,\sigma_1} 
 \end{eqnarray*}
   of  the  hyperedges  where  $\sigma_1= \{v_2,v_3\}$ and 
 $\sigma_2=\{v_0,v_1\}$;   

\item
Let  $\mathcal{H}= \{\{v_0,v_1\}\}$.   Then  
\begin{eqnarray*}
{\rm  Homeo}(\mathcal{H})={\rm  Stab}(\mathcal{H}) = \mathbb{Z}_2\oplus \mathbb{Z}_2  
\end{eqnarray*}
  is  generated  by 
\begin{eqnarray*}
&&\alpha = {{v_0,v_1,v_2,v_3}\choose{v_1,v_0,v_2,v_3}},~~~~~~  \beta={{v_0,v_1,v_2,v_3}\choose{v_0,v_1,v_3,v_2}}  
\end{eqnarray*}
such  that  $\alpha^2=\beta^2=1$.   Consequently,   
 ${\rm  Aut}(\mathcal{H})=1$;
 \item
 Let  $\mathcal{H}=\{\{v_0,v_1,v_2\},  \{v_1,v_2,v_3\}\}$.   Then  
      Then  
\begin{eqnarray*}
{\rm  Homeo}(\mathcal{H})  =\mathbb{Z}_2\oplus \mathbb{Z}_2 
\end{eqnarray*}
is  generated  by  
\begin{eqnarray*}
&&\alpha = {{v_0,v_1,v_2,v_3}\choose{v_3,v_1,v_2,v_0}},~~~~~~  \beta={{v_0,v_1,v_2,v_3}\choose{v_0,v_2,v_1,v_3}}  
\end{eqnarray*}
and 
 \begin{eqnarray*}
 {\rm  Stab}(\mathcal{H}) = \mathbb{Z}_2  
\end{eqnarray*}
  is  generated  by  $\beta$.  
Thus  ${\rm  Aut}(\mathcal{H})=\mathbb{Z}_2$  is  generated  by  $\alpha$.    
\end{enumerate}
\end{example}

\begin{example}\label{ex-1-sec5}
  Consider  
   the  directed  $n$-uniform   hypergraphs
  \begin{eqnarray}\label{eq-ex0779a}
  {\rm  Conf}_n(M,r),   ~~~~~~ \bigcap_{r\in \mathbb{I}}{\rm  Conf}_n(M,r) 
\end{eqnarray}
and  
  the  $n$-uniform   hypergraphs  
\begin{eqnarray}\label{eq-ex03a}
  {\rm  Conf}_n(M,r)/\Sigma_n,   ~~~~~~ \bigcap_{r\in \mathbb{I}}{\rm  Conf}_n(M,r)/\Sigma_n  
\end{eqnarray}
on  $M$     for  any  fixed  $r\geq  0$  and      any  $\mathbb{I} \subseteq  \mathbb{R}$. 
For  any  $\varphi\in  {\rm  Isom}(M)$,  by  the    filtrations
  in   (\ref{eq-099a})  and      (\ref{eq-1.00a2}),     we  have     induced  isometries     
     \begin{eqnarray}\label{eq-7798.00a1b}
      {\rm  Conf}(\varphi):     {\rm  Conf}_n(M,r) \longrightarrow     {\rm  Conf}_n(M,r)  
   \end{eqnarray} 
   and 
   \begin{eqnarray}\label{eq-1.00a1b}
      {\rm  Conf}(\varphi)/\Sigma_n:     {\rm  Conf}_n(M,r)/\Sigma_n \longrightarrow     {\rm  Conf}_n(M,r)/\Sigma_n.   
   \end{eqnarray} 
    Consequently,  we  have     
    \begin{eqnarray}\label{eq-ex08a}
       {\rm  Isom}({\rm  Conf}_n(M,r) )&=& {\rm  Isom}(\bigcap_{r\in \mathbb{I}}{\rm  Conf}_n(M,r) )
       \nonumber \\
      &=&
    {\rm  Isom}({\rm  Conf}_n(M,r)/\Sigma_n )     \nonumber\\
    &=& {\rm  Isom}(\bigcap_{r\in \mathbb{I}}{\rm  Conf}_n(M,r)/\Sigma_n )     \nonumber\\
    &=&{\rm  Isom}(M).   
    \end{eqnarray}

    \begin{enumerate}[(1)]
    
    \item
    Let $M=V$  be  a  discrete  set of  cardinality  at  least  $n$.  Then  both     hypergraphs   in  (\ref{eq-ex03a}) 
    are  the  collection  of  all  possible  subsets  of  $V$  of  cardinality  $n$.    
     We  have  
     \begin{eqnarray*}
     {\rm  Stab}(  {\rm  Conf}_n(V,r)/\Sigma_n) &=& {\rm Stab}(\bigcap_{r\in \mathbb{I}}{\rm  Conf}_n(V,r)/\Sigma_n)\\
     &=& 
     \begin{cases}
     \Sigma_n, &  {\rm  ~if~} |V|=n\\
     1,  &{\rm ~if~} |V|>n.  
     \end{cases}
     \end{eqnarray*}
    Consequently,  
    \begin{eqnarray*}
    {\rm  Aut}_{\rm  Isom}(  {\rm  Conf}_n(V,r)/\Sigma_n) &=&  {\rm  Aut}_{\rm  Isom}( \bigcap_{r\in \mathbb{I}} {\rm  Conf}_n(V,r)/\Sigma_n)\\
    &=& 
    \begin{cases}
    1,  & {\rm~if~} |V|=n\\
    {\rm  Isom}(V),  &{\rm ~if~} |V|>n 
    \end{cases}
    \end{eqnarray*}
    where  $  {\rm  Isom}(V)$  is  given  by   (\ref{eq-2.01e}).  
    \item
    Let  $M=S^1$  be  the  unit  circle.   Suppose  $0\leq  r<  \pi/n$.   
    Let $\varphi\in   {\rm  Stab}( {\rm  Conf}_n(S^1,r)/\Sigma_n)$.    
     Then
    $\varphi$   is  a   homeomorphism   $\varphi:  S^1\longrightarrow  S^1$   
    such  that   
    \begin{eqnarray*}
    \varphi(\{x_1,\ldots, x_n\})=  \{x_1,\ldots, x_n\} 
    \end{eqnarray*}
    for  each  
    $
    \{x_1,\ldots, x_n\}\in  {\rm  Conf}_n(S^1,r)/\Sigma_n$.   
    Hence   
    there   exists  $1\leq  i\leq  n$  
    such  that  $\varphi(x_k)=  x_{k+i}$    for  
    each  $1\leq  k\leq  n$  or   $\varphi(x_k)=  x_{-k+i}$    for  
    each  $1\leq  k\leq  n$,   where  $k+n\equiv  k$   mod  $n$.   
    Fix  a   hyperedge  $\{x_1,\ldots, x_n\}\in  {\rm  Conf}_n(S^1,r)/\Sigma_n$.  
      Note  that   
    each   vertex $x_i$  in      $\{x_1,\ldots, x_n\} $  
    can  be  perturbed  continuously  in a small  neighborhood.  
    For  instance,   if  we  perturb  $x_1$  slightly and  take  $x_1'\neq  x_1$  in  a  sufficiently  small  neighborhood  of  $x_1$  in  $S^1$,  
       then  we  obtain  another  hyperedge  
     $
       \{x'_1,x_2, \ldots, x_n\}\in  {\rm  Conf}_n(S^1,r)/\Sigma_n 
   $ 
       such  that  
       \begin{eqnarray*}
       \varphi(x'_1)\notin  \{x'_1, x_2,  \ldots,  x_n\} 
       \end{eqnarray*}
           unless 
       $\varphi$  is  the  identity  map  on  $S^1$.  
     Thus     
    \begin{eqnarray*}
    {\rm  Stab}( {\rm  Conf}_n(S^1,r)/\Sigma_n) =\{1\} 
    \end{eqnarray*}
       is   trivial.  
    Hence   with  the  help of  (\ref{eq-ex08a}),  
   \begin{eqnarray*}
    {\rm   Aut}_{\rm  Isom}( {\rm  Conf}_n(S^1,r)/\Sigma_n)  &=& {\rm  Isom}({\rm  Conf}_n(S^1,r)/\Sigma_n)\\
    &=& {\rm  Isom}(S^1)\\
    &=& O(2)
    \end{eqnarray*}
    is  the  orthogonal  group  of degree  $2$.

    On  the  other  hand,    since  each  vertex   $x_i$  in   a   hyperedge  
    \begin{eqnarray*}
    \{x_1,\ldots, x_n\}\in  \bigcap_{r< \frac{\pi}{n}} {\rm  Conf}_n(S^1,r)/\Sigma_n 
    \end{eqnarray*}
       cannot  be  perturbed  in  any  small  neighborhood,  
    we  have  
    \begin{eqnarray*}
     {\rm  Stab}(\bigcap_{r<  \frac{\pi}{n}} {\rm  Conf}_n(S^1,r)/\Sigma_n) \cap  {\rm  Isom}(S^1)&=&\Sigma_n  \cap    {\rm  Isom}(S^1)\\
     &=&\{  e^{i\frac{2k\pi}{n}},\gamma  e^{i\frac{2k\pi}{n}}\mid  1\leq  k\leq  n\},
    \end{eqnarray*}
    where  $\Sigma_n$  permutes  the  order  of   
    $e^{i\theta}, e^{i(\theta+ \frac{2\pi}{n})}, \ldots, e^{i(\theta+ \frac{2(n-1)\pi}{n})} $  for  any  possible  $\theta$  such  that  
    \begin{eqnarray*}
    \{e^{i\theta}, e^{i(\theta+ \frac{2\pi}{n})}, \ldots, e^{i(\theta+ \frac{2(n-1)\pi}{n})} \}\in  \bigcap_{r<  \frac{\pi}{n}} {\rm  Conf}_n(S^1,r)/\Sigma_n   
    \end{eqnarray*}
    and  $\gamma$  is  the  reflection  with  respect  to  $x$-axis.      
    It  follows with  the  help  of  (\ref{eq-ex08a})  that   
    \begin{eqnarray*}
     {\rm   Aut}_{\rm  Isom}( \bigcap_{r< \frac{\pi}{n}} {\rm  Conf}_n(S^1,r)/\Sigma_n)  
    &=& {\rm  Isom}(S^1)/(\Sigma_n\cap {\rm  Isom}(S^1)) \\
    &=&  O(2)/(\Sigma_n\cap  O(2))\\
    &\cong &  O(2). 
    \end{eqnarray*}
    \item
    Let  $M=S^m$  be  the  unit  $m$-sphere. 
   Choose  $r$  such  that  ${\rm  Conf}_n(S^m,r)/\Sigma_n$  is  non-empty.  
   Then     
    each   vertex $x_i$  in   a   hyperedge  $\{x_1,\ldots, x_n\}\in  {\rm  Conf}_n(S^m,r)/\Sigma_n$  
    can  be  perturbed  continuously  in a  small  neighborhood.   Thus  by  a  similar  argument  
    with     Example~\ref{ex-1-sec5}~(2),    we  have  that 
    ${\rm  Stab}( {\rm  Conf}_n(S^1,r)/\Sigma_n)$   is   trivial.  
    Therefore,  
     \begin{eqnarray*}
    {\rm   Aut}_{\rm  Isom}( {\rm  Conf}_n(S^m,r)/\Sigma_n)  &=& {\rm  Isom}({\rm  Conf}_n(S^m,r)/\Sigma_n)\\
    &=& {\rm  Isom}(S^m)\\
    &=& O(m+1)
    \end{eqnarray*}
    is  the  orthogonal  group  of  degree  $m+1$.  
    \end{enumerate}
     \end{example}

\section{Automorphism  groups  of  configuration  spaces  and  hypergraphs}\label{sec-10}

 In  this  section,   we  study the  automorphism  groups  of  hypergraphs  on  manifolds.  
 We  prove that  the  automorphism  group  of  a hypergraph  is  a  subgroup  in   
   the  intersection  of  the  automorphism  
  groups  of  the  (lower-)associated  simplicial complexes  and  the  (lower-)associated  
  independence  hypergraphs  in  Proposition~\ref{subgrp}.

For  any  directed  $n$-hyperedge  $\vec\sigma(M)= (x_1,\ldots,x_n)$  on  $M$,  
define   its  {\it closure}  $\Delta\vec\sigma(M)$  as   the  hyperdigraph  consisting  of  all  the 
non-empty   subsequences  of  $\vec\sigma(M)$,  i.e.  
\begin{eqnarray*}
\Delta\vec\sigma(M)=\{(x_{i_1},\ldots,x_{i_k})\mid   1\leq  i_1<\cdots<i_k\leq  n{\rm~and~}  1\leq  k\leq n\};    
\end{eqnarray*}
and  define  its   {\it  co-closure}   $\bar\Delta\vec\sigma(M)$  as   
the  hyperdigraph  consisting  of  all  the  
finite    super-sequences   of  $\vec\sigma(M)$,  i.e.  
\begin{eqnarray*}
\bar\Delta\vec\sigma(M)&=&\{(y_1,\ldots,y_l)\mid     l\geq  n  {\rm~and~there~exist~} 
1\leq  i_1<\cdots<i_n\leq  l \\
&&{\rm~such ~that~}  y_{i_1}=x_1,\ldots,  y_{i_n}=x_n\}.     
\end{eqnarray*}

\begin{definition}
\begin{enumerate}[(1)]
\item
 We  call   a  hyperdigraph    $\vec{\mathcal{K}}(M)$   on  $M$  a  {\it  directed  simplicial  complex}  
 if    $\vec{\mathcal{K}}(M)$   is  a  $\Delta$-manifold,  in  other  words,   if   $\Delta\vec\sigma$  is  a  graded  submanifold  of  $\vec{\mathcal{K}}(M)$  
  for  any  $\vec\sigma\in \vec{\mathcal{K}}(M)$;    
  \item 
   We  call   a  hyperdigraph    $\vec{\mathcal{L}}(M)$   on  $M$  a  {\it   independence  hyperdigraph}  
   if   $\bar \Delta\vec\sigma$  is  a  graded  submanifold  of  $\vec{\mathcal{L}}(M)$  
  for  any  $\vec\sigma\in \vec{\mathcal{L}}(M)$.  
  \end{enumerate}
\end{definition}  

 Let  $\vec{\mathcal{H}}(M)$  be  a  hyperdigraph  on  $M$.  
 Define  the   {\it  associated  directed  simplicial  complex}   of  $\vec{\mathcal{H}}(M)$   
 as  
 \begin{eqnarray}\label{eq-2.1aammm}
   \Delta\vec{\mathcal{H}}(M)   =\bigcup_{\vec{\sigma}(M)\in\vec{\mathcal{H}}(M)}\Delta\vec\sigma(M),     
 \end{eqnarray}
 which  is  the  smallest    directed  simplicial  complex  containing  $\vec{\mathcal{H}}(M)$  as  
 a  graded  submanifold.  
   Define  the   {\it   lower-associated  directed  simplicial  complex}   of  $\vec{\mathcal{H}}(M)$   
 as  
 \begin{eqnarray}\label{eq-2.1bbmmm}
   \delta\vec{\mathcal{H}}(M)   =\bigcup_{\Delta\vec{\sigma}(M)\subseteq\vec{\mathcal{H}}(M)} \vec\sigma(M),     
 \end{eqnarray}
 which  is  the    largest    directed  simplicial  complex  contained  in  $\vec{\mathcal{H}}(M)$  as  
 a  graded  submanifold.  
  Define  the   {\it  associated   independence  hyperdigraph}    of  $\vec{\mathcal{H}}(M)$   
 as  
 \begin{eqnarray}\label{eq-2.1ccmmm}
  \bar \Delta\vec{\mathcal{H}}(M)   =\bigcup_{\vec{\sigma}(M)\in\vec{\mathcal{H}}(M)}\bar\Delta\vec\sigma(M),     
 \end{eqnarray}
 which  is  the  smallest  independence  hyperdigraph   containing  $\vec{\mathcal{H}}(M)$  as  
 a  graded  submanifold.  
   Define  the   {\it   lower-associated  independence  hyperdigraph}   of  $\vec{\mathcal{H}}(M)$   
 as  
 \begin{eqnarray}\label{eq-2.1ddmmm}
   \bar\delta\vec{\mathcal{H}}(M)   =\bigcup_{\bar\Delta\vec{\sigma}(M)\subseteq\vec{\mathcal{H}}(M)} \vec\sigma(M),     
 \end{eqnarray}
 which  is  the    largest  independence  hyperdigraph  contained  in  $\vec{\mathcal{H}}(M)$  as  
 a  graded  submanifold.  
 A  directed  hyperedge  $\vec\sigma(M)\in \vec{\mathcal{H}}(M)$  is  called  {\it  maximal}  if  
  there  does  not  exist  any  $\vec\tau(M)  \in \vec{\mathcal{H}}(M)$  
  such  that    $\vec\tau(M) \neq \vec\sigma(M)$  and  
       $\vec\sigma(M)$  is  a  subsequence  of   $\vec\tau(M) $.    
 Let  $\max(\vec{\mathcal{H}}(M))$  be  the  collection  of  all  the  maximal   directed 
  hyperedges  in  $\vec{\mathcal{H}}(M)$. 
  If   $\vec{\mathcal{H}}(M)= \bigcup_{k=1}^N\vec{\mathcal{H}}_k(M)$ where  $N$  is  finite,  
  then  for 
  each  directed hyperedge  $\vec{\sigma}(M)\in \vec{\mathcal{H}}(M)$  
  there  exists  a  maximal  directed  hyperedge  $\vec\eta(M)\in \max \vec{\mathcal{H}}(M)$  
  such  that  $\vec\sigma(M)$  is  a  subsequence  of  $\vec \eta(M)$,   and  consequently   
 \begin{eqnarray*}
 \max\vec{\mathcal{H}}(M)= \max(\Delta\vec{\mathcal{H}}(M)),~~~~~~
 \Delta\vec{\mathcal{H}}(M)=\Delta(\max\vec{\mathcal{H}}(M)). 
 \end{eqnarray*}
   A   directed  hyperedge  $\vec\sigma\in \vec{\mathcal{H}}(M)$  is  called  {\it  minimal}   if  
  there  does  not  exist  any  $\vec{\tau}   \in\vec{\mathcal{H}}(M)$  such that  
    $\vec\tau(M) \neq \vec\sigma(M)$  and  
       $\vec\tau(M)$  is  a  subsequence  of   $\vec\sigma(M) $.      
 Let  $\min(\vec{\mathcal{H}}(M))$  be  the  collection  of  all  the  minimal  directed  
 hyperedges  in  $\vec{\mathcal{H}}(M)$. 
  Then  
  \begin{eqnarray*}
  \min \vec{\mathcal{H}}(M)=\min(\bar\Delta\vec{\mathcal{H}}(M)),~~~~~~
  \bar\Delta\vec{\mathcal{H}}(M)=\bar\Delta(\min\vec{\mathcal{H}}(M)).  
  \end{eqnarray*}

For  any     $n$-hyperedge  $ \sigma(M)= \{x_1,\ldots,x_n\}$  on  $M$,  
define   its  {\it closure}  $\Delta \sigma(M)$  as   the  hypergraph  consisting  of  all  the  non-empty  subsets  of  
$ \sigma(M)$,  i.e.  
\begin{eqnarray*}
\Delta \sigma(M)=\{\{x_{i_1},\ldots,x_{i_k}\}\mid   1\leq  i_1<\cdots<i_k\leq  n{\rm~and~}  1\leq  k\leq n\};    
\end{eqnarray*}
and  define  its   {\it  co-closure}   $\bar\Delta \sigma(M)$  as   
the  hypergraph  consisting  of  all  the  
finite    super-sets   of  $\sigma(M)$,  i.e.  
\begin{eqnarray*}
\bar\Delta \sigma(M)&=&\{\{y_1,\ldots,y_l\}\mid     l\geq  n  {\rm~and~there~exist~} 
1\leq  i_1<\cdots<i_n\leq  l \\
&&{\rm~such ~that~}  y_{i_1}=x_1,\ldots,  y_{i_n}=x_n\}.     
\end{eqnarray*}

\begin{definition}
\begin{enumerate}[(1)]
\item
 We  call   a  hypergraph    ${\mathcal{K}}(M)$   on  $M$  a  {\it   simplicial  complex}  
 if    ${\mathcal{K}}(M)$   is  a  $\Delta$-manifold,  in  other  words,   if   $\Delta\sigma$  is  a  graded  submanifold  of  ${\mathcal{K}}(M)$  
  for  any  $\sigma\in {\mathcal{K}}(M)$;    
  \item 
   We  call   a  hypergraph    ${\mathcal{L}}(M)$   on  $M$  an  {\it   independence  hypergraph}  
   if   $\bar \Delta\sigma$  is  a  graded  submanifold  of  ${\mathcal{L}}(M)$  
  for  any  $\sigma\in{\mathcal{L}}(M)$.  
  \end{enumerate}
\end{definition}

 Let  $\mathcal{H}(M)=\pi_\bullet(\vec{\mathcal{H}}(M))$.  
 Then  $\mathcal{H}(M)$    is   a  hypergraph  on  $M$.  
 Define  the   {\it  associated  simplicial  complex}   of  $ {\mathcal{H}}(M)$   
 as  
 \begin{eqnarray}\label{eq-7.1aammm}
   \Delta {\mathcal{H}}(M)   =\bigcup_{ {\sigma}(M)\in {\mathcal{H}}(M)}\Delta \sigma(M),     
 \end{eqnarray}
 which  is  the  smallest      simplicial  complex  containing  $ {\mathcal{H}}(M)$  as  
 a  graded  submanifold.  
   Define  the   {\it   lower-associated    simplicial  complex}   of  $ {\mathcal{H}}(M)$   
 as  
 \begin{eqnarray}\label{eq-7.1bbmmm}
   \delta {\mathcal{H}}(M)   =\bigcup_{\Delta {\sigma}(M)\subseteq {\mathcal{H}}(M)}  \sigma(M),     
 \end{eqnarray}
 which  is  the    largest       simplicial  complex  contained  in  $ {\mathcal{H}}(M)$  as  
 a  graded  submanifold.  
  Define  the   {\it  associated   independence  hypergraph}    of  ${\mathcal{H}}(M)$   
 as  
 \begin{eqnarray}\label{eq-7.1ccmmm}
  \bar \Delta {\mathcal{H}}(M)   =\bigcup_{ {\sigma}(M)\in {\mathcal{H}}(M)}\bar\Delta \sigma(M),     
 \end{eqnarray}
 which  is  the  smallest  independence  hypergraph   containing  $ {\mathcal{H}}(M)$  as  
 a  graded  submanifold.  
   Define  the   {\it   lower-associated  independence  hypergraph}   of  $ {\mathcal{H}}(M)$   
 as  
 \begin{eqnarray}\label{eq-7.1ddmmm}
   \bar\delta {\mathcal{H}}(M)   =\bigcup_{\bar\delta {\sigma}(M)\subseteq {\mathcal{H}}(M)}  
   \sigma(M),     
 \end{eqnarray}
 which  is  the    largest  independence  hypergraph  contained  in  $ {\mathcal{H}}(M)$  as  
 a  graded  submanifold.  
  A    hyperedge  $ \sigma(M)\in {\mathcal{H}}(M)$  is  called  {\it  maximal}  if  
  there  does  not  exist  any  $ \tau(M)  \in  {\mathcal{H}}(M)$  
  such  that      
       $ \sigma(M)$  is  a  proper  subset  of   $ \tau(M) $.    
 Let  $\max {\mathcal{H}}(M)$  be  the  collection  of  all  the  maximal    
  hyperedges  in  $ {\mathcal{H}}(M)$. 
    If   ${\mathcal{H}}(M)= \bigcup_{k=1}^N{\mathcal{H}}_k(M)$ where  $N$  is  finite,  then  
 \begin{eqnarray*}
 \max {\mathcal{H}}(M)= \max(\Delta {\mathcal{H}}(M)),~~~~~~
 \Delta {\mathcal{H}}(M)=\Delta(\max {\mathcal{H}}(M)). 
 \end{eqnarray*}
   A     hyperedge  $ \sigma\in  {\mathcal{H}}(M)$  is  called  {\it  minimal}   if  
  there  does  not  exist  any  $ {\tau}   \in {\mathcal{H}}(M)$  such that  
       $ \tau(M)$  is  a   proper  subset   of   $ \sigma(M) $.      
 Let  $\min {\mathcal{H}}(M)$  be  the  collection  of  all  the  minimal  directed  
 hyperedges  in  $ {\mathcal{H}}(M)$. 
  Then  
  \begin{eqnarray*}
  \min   {\mathcal{H}}(M) =\min(\bar\Delta {\mathcal{H}}(M)),~~~~~~
  \bar\Delta {\mathcal{H}}(M)=\bar\Delta(\min {\mathcal{H}}(M)).  
  \end{eqnarray*}
  By  (\ref{eq-2.1aammm})   -  (\ref{eq-7.1ddmmm}),   it  is  straight-forward  that  
 \begin{eqnarray*}
 &\Delta\mathcal{H}(M) = \pi_\bullet(\Delta\vec{\mathcal{H}}(M)),   ~~~&
\delta\mathcal{H}(M) = \pi_\bullet(\delta\vec{\mathcal{H}}(M)),\\ 
&\bar\Delta\mathcal{H}(M) = \pi_\bullet(\bar\Delta\vec{\mathcal{H}}(M)),  ~~~& 
\bar\delta\mathcal{H}(M) = \pi_\bullet(\bar\delta\vec{\mathcal{H}}(M))
\end{eqnarray*}
and  
\begin{eqnarray*}
   \max \mathcal{H}(M) & =&\pi_\bullet ( \max  \vec{\mathcal{H}}(M)), \\      
  \min \mathcal{H}(M) &  =&\pi_\bullet  (\min \vec{\mathcal{H}}(M)).  
\end{eqnarray*}
 
\begin{proposition}\label{subgrp}
 Suppose  ${\mathcal{H}}(M)= \bigcup_{k=1}^N{\mathcal{H}}_k(M)$ where  $N$  is  finite.  Then  
 \begin{enumerate}[(1)]
 \item
 ${\rm  Aut}(\max\mathcal{H}(M))= {\rm  Aut}(\Delta\mathcal{H}(M))$;
 \item
 ${\rm  Aut}(\min\mathcal{H}(M))= {\rm  Aut}(\bar\Delta\mathcal{H}(M))$; 
 \item
 ${\rm  Aut}(\mathcal{H}(M))$  is  a  subgroup  of    
 \begin{eqnarray*}
 {\rm  Aut}(\Delta\mathcal{H}(M)) \cap  {\rm  Aut}(\bar\Delta\mathcal{H}(M))  \cap  {\rm  Aut}(\delta\mathcal{H}(M))\cap  {\rm  Aut}(\bar\delta\mathcal{H}(M)).  
 \end{eqnarray*}
  \end{enumerate}  
\end{proposition}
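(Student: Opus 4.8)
The plan is to derive all three parts from two facts recorded just above the statement: the six operations $\max,\min,\Delta,\delta,\bar\Delta,\bar\delta$ are natural under homeomorphisms of $M$, and they satisfy the collapse identities $\Delta\mathcal{H}(M)=\Delta(\max\mathcal{H}(M))$, $\max\mathcal{H}(M)=\max(\Delta\mathcal{H}(M))$ together with their duals for $\bar\Delta$ and $\min$. First I would record the \emph{naturality} as a lemma: for every $\varphi\in{\rm Homeo}(M)$ one has $\varphi(\Delta\mathcal{H}(M))=\Delta(\varphi\mathcal{H}(M))$, and likewise for $\delta,\bar\Delta,\bar\delta,\max,\min$. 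This is immediate from the defining unions in (\ref{eq-7.1aammm})--(\ref{eq-7.1ddmmm}), since a bijection $\varphi$ of $M$ commutes with forming subsets, supersets, and selecting the inclusion-maximal or inclusion-minimal members; the finiteness hypothesis $N<\infty$ guarantees that every hyperedge sits below a maximal one and above a minimal one, so the collapse identities are available.

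For part (1) I would first prove the equality of homeomorphism groups ${\rm Homeo}(\max\mathcal{H}(M))={\rm Homeo}(\Delta\mathcal{H}(M))$. If $\varphi$ preserves $\max\mathcal{H}(M)$, then any $\sigma\in\Delta\mathcal{H}(M)$ lies in some maximal $\eta$, whence $\varphi\sigma\subseteq\varphi\eta\in\max\mathcal{H}(M)$ and so $\varphi\sigma\in\Delta(\max\mathcal{H}(M))=\Delta\mathcal{H}(M)$; conversely, a homeomorphism preserves inclusion, hence carries $\max(\Delta\mathcal{H}(M))=\max\mathcal{H}(M)$ to itself. Passing to the automorphism quotients, I would use Proposition~\ref{pr-3.maq} to identify each side with its faithful permutation action and check that the corresponding stabilizer subgroups match up, giving ${\rm Aut}(\max\mathcal{H}(M))={\rm Aut}(\Delta\mathcal{H}(M))$. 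Part (2) is the formal dual: replace subsets, closures and maximal elements by supersets, co-closures and minimal elements, and use $\bar\Delta\mathcal{H}(M)=\bar\Delta(\min\mathcal{H}(M))$ and $\min\mathcal{H}(M)=\min(\bar\Delta\mathcal{H}(M))$.

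For part (3), naturality gives $\varphi(\Delta\mathcal{H}(M))=\Delta\mathcal{H}(M)$ as soon as $\varphi(\mathcal{H}(M))=\mathcal{H}(M)$, and likewise for $\bar\Delta,\delta,\bar\delta$; hence ${\rm Homeo}(\mathcal{H}(M))$ is contained in each of ${\rm Homeo}(\Delta\mathcal{H}(M))$, ${\rm Homeo}(\bar\Delta\mathcal{H}(M))$, ${\rm Homeo}(\delta\mathcal{H}(M))$, ${\rm Homeo}(\bar\delta\mathcal{H}(M))$, and so in their intersection. I would then descend to the automorphism groups via Proposition~\ref{pr-3.maq}, exhibiting the induced map ${\rm Aut}(\mathcal{H}(M))\to{\rm Aut}(\Delta\mathcal{H}(M))\cap{\rm Aut}(\bar\Delta\mathcal{H}(M))\cap{\rm Aut}(\delta\mathcal{H}(M))\cap{\rm Aut}(\bar\delta\mathcal{H}(M))$ and verifying that it is an injective homomorphism.

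I expect the passage from the homeomorphism groups to the automorphism groups to be the main obstacle, and it is precisely the point where care is needed. The stabilizers entering the various quotients are not literally the same subgroup of ${\rm Homeo}(M)$: ${\rm Stab}(\Delta\mathcal{H}(M))$ fixes every vertex, because $\Delta\mathcal{H}(M)$ contains all singletons, whereas ${\rm Stab}(\mathcal{H}(M))$ (and ${\rm Stab}(\max\mathcal{H}(M))$) fixes each hyperedge only setwise and may permute its vertices. Thus one cannot simply quotient the inclusion ${\rm Homeo}(\mathcal{H}(M))\subseteq{\rm Homeo}(\Delta\mathcal{H}(M))$ by a single common normal subgroup; the well-definedness and injectivity on ${\rm Aut}$ must be argued through the faithful actions of Proposition~\ref{pr-3.maq}, with $N<\infty$ ensuring that the maximal (resp. minimal) hyperedges already determine $\Delta\mathcal{H}(M)$ (resp. $\bar\Delta\mathcal{H}(M)$). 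The non-degeneracy phenomenon illustrated in Example~\ref{ex-1-sec5}, under which such stabilizers collapse, is what makes these identifications go through cleanly, and I would isolate it as the technical heart of the argument.
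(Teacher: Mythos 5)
Your representative-level argument is exactly the paper's proof. The paper also proves (1) by working with a homeomorphism $\varphi$ of $M$: preservation of $\Delta\mathcal{H}(M)$ forces preservation of $\max\mathcal{H}(M)$, and conversely, for $\tau\subseteq\sigma$ with $\sigma$ maximal one has $\psi(\tau)\subseteq\psi(\sigma)$ and $\Delta\psi(\sigma)\subseteq\Delta\mathcal{H}(M)$, so $\psi(\tau)\in\Delta\mathcal{H}(M)$; part (2) is the formal dual; and part (3) is reduced, via (1) and (2), to checking that a representative of an element of ${\rm Aut}(\mathcal{H}(M))$ preserves $\max$, $\min$, $\delta$ and $\bar\delta$. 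The finiteness of $N$ enters in both texts only through the collapse identities $\Delta\mathcal{H}(M)=\Delta(\max\mathcal{H}(M))$, $\max\mathcal{H}(M)=\max(\Delta\mathcal{H}(M))$ and their duals, just as you say.

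The genuine gap is in the step you yourself single out as the technical heart. You propose to descend from equality of homeomorphism groups to equality of automorphism groups by \emph{checking that the corresponding stabilizer subgroups match up}, asserting that the perturbation phenomenon of Example~\ref{ex-1-sec5} makes the stabilizers collapse. That verification cannot be carried out, because the stabilizers genuinely differ and the two sides of (1) need not be isomorphic under the paper's definitions. Take $M=V=\{v_0,v_1,v_2,v_3\}$ discrete and $\mathcal{H}=\{\{v_0,v_1\}\}$, which is item (3) of the first example in Section~\ref{sec5} and satisfies the hypothesis of Proposition~\ref{subgrp} with $N=2$. There $\max\mathcal{H}=\mathcal{H}$ and, by the paper's own computation, ${\rm Homeo}(\mathcal{H})={\rm Stab}(\mathcal{H})=\mathbb{Z}_2\oplus\mathbb{Z}_2$, so ${\rm Aut}(\max\mathcal{H})=1$. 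On the other hand $\Delta\mathcal{H}=\{\{v_0\},\{v_1\},\{v_0,v_1\}\}$ contains the singletons, so ${\rm Stab}(\Delta\mathcal{H})$ is only the copy of $\mathbb{Z}_2$ generated by the swap of $v_2,v_3$, while ${\rm Homeo}(\Delta\mathcal{H})=\mathbb{Z}_2\oplus\mathbb{Z}_2$; hence ${\rm Aut}(\Delta\mathcal{H})\cong\mathbb{Z}_2$. The natural comparison map ${\rm Aut}(\Delta\mathcal{H}(M))\rightarrow{\rm Aut}(\max\mathcal{H}(M))$ (which exists because ${\rm Stab}(\Delta\mathcal{H})\subseteq{\rm Stab}(\max\mathcal{H})$) is surjective with nontrivial kernel, so no identification of the two groups is possible; the stabilizer collapse you invoke fails exactly in such degenerate cases, which the hypotheses do not exclude. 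To be fair, the paper's printed proof never confronts this point: it manipulates representatives throughout and silently identifies hyperedge-preservation statements about representatives with statements about ${\rm Aut}$, so what it actually establishes is the ${\rm Homeo}$-level equality ${\rm Homeo}(\max\mathcal{H}(M))={\rm Homeo}(\Delta\mathcal{H}(M))$ and its analogues, i.e.\ precisely the sound part of your plan. The additional step you correctly recognized as necessary is not merely delicate; as stated it is false, so it cannot be closed by either your argument or the paper's.
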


\begin{proof}
 (1)  Let  $\varphi\in  {\rm  Aut}(\Delta\mathcal{H}(M))$.  
 Then  for  any  $\sigma(M)\in \max\mathcal{H}(M)$,   
 \begin{eqnarray*}
 \varphi(\sigma(M))\in \max\mathcal{H}(M). 
 \end{eqnarray*}   
 Thus  $\varphi\in  {\rm  Aut}(\max\mathcal{H}(M))$.  
 Hence  
${\rm  Aut}(\Delta\mathcal{H}(M))\subseteq  {\rm  Aut}(\max\mathcal{H}(M))$.

 Conversely,  let   $\psi\in  {\rm  Aut}(\max\mathcal{H}(M))$.    
 For  any  $\tau(M)\in \Delta\mathcal{H}(M)$,  there exists  $\sigma(M)\in  \max\mathcal{H}(M)$  such  that  
 $\tau(M)\subseteq \sigma(M)$.  
 Hence    
 \begin{eqnarray}\label{eq-99.7a1}
 \psi(\tau(M)) \subseteq\psi(\sigma(M)).  
 \end{eqnarray}
Note  that
 \begin{eqnarray}\label{eq-99.7a2}
 \Delta\psi(\sigma(M))\subseteq \Delta\mathcal{H}(M).    
 \end{eqnarray}
    It  follows  from  (\ref{eq-99.7a1})  and     (\ref{eq-99.7a2})  that 
     \begin{eqnarray*}
     \psi(\tau(M))\in  \Delta\mathcal{H}(M).  
     \end{eqnarray*}
      Thus  $\psi\in   {\rm  Aut}(\Delta\mathcal{H}(M))$.  
 Hence     ${\rm  Aut}(\max\mathcal{H}(M))\subseteq {\rm  Aut}(\Delta\mathcal{H}(M))$.

 (2)   Let  $\varphi\in  {\rm  Aut}(\bar\Delta\mathcal{H}(M))$.  
 Then   for  any  $\sigma(M)\in \min\mathcal{H}(M)$,   
 \begin{eqnarray*}
 \varphi(\sigma(M))\in \min\mathcal{H}(M).
 \end{eqnarray*}    
 Thus  $\varphi\in  {\rm  Aut}(\min\mathcal{H}(M))$.  
 Hence  ${\rm  Aut}(\bar\Delta\mathcal{H}(M))\subseteq {\rm  Aut}(\min\mathcal{H}(M))$.

 Conversely,  let   $\psi\in  {\rm  Aut}(\min\mathcal{H}(M))$.    
 For  any  $\tau(M)\in \bar\Delta\mathcal{H}(M)$,  
 there exists  $\sigma(M)\in  \min\mathcal{H}(M)$  such  that  
 $\tau(M)\supseteq \sigma(M)$.  
Hence 
\begin{eqnarray}\label{eq-99.7b1}
\psi(\tau(M)) \supseteq\psi(\sigma(M)). 
\end{eqnarray}
Note  that 
\begin{eqnarray}\label{eq-99.7b2}
\bar\Delta\psi(\sigma(M))\subseteq\bar \Delta\mathcal{H}(M). 
\end{eqnarray}
   It  follows  from  (\ref{eq-99.7b1})  and     (\ref{eq-99.7b2})  that 
\begin{eqnarray*}
 \psi(\tau(M))\in\bar\Delta\mathcal{H}(M).  
\end{eqnarray*}
  Thus  $\psi\in   {\rm  Aut}(\bar\Delta\mathcal{H}(M))$.  
Hence    ${\rm  Aut}(\min\mathcal{H}(M))\subseteq  {\rm  Aut}(\bar\Delta\mathcal{H}(M))$.

 (3)  Let  $\varphi\in   {\rm  Aut}(\mathcal{H}(M))$.   
 With  the  help  of   (1)  and   (2),  in  order to  prove   (3),  
 it  suffices to  prove   $\varphi \in {\rm  Aut}(\max\mathcal{H}(M))$,    $\varphi \in {\rm  Aut}(\min\mathcal{H}(M))$,    $\varphi \in {\rm  Aut}(\delta\mathcal{H}(M))$  and   $\varphi \in {\rm  Aut}(\bar\delta\mathcal{H}(M))$.  
 
 \begin{enumerate}[(i)]
 \item
 Let  $\sigma_1(M)\in  \max\mathcal{H}(M)$.   Then  $\varphi(\sigma_1(M))\in    \max\mathcal{H}(M)$.  
 Thus  $\varphi\in  {\rm  Aut}(\max\mathcal{H}(M))$.  
 
 \item
  Let  $\sigma_2(M)\in  \min\mathcal{H}(M)$.   Then  $\varphi(\sigma_2(M))\in    \min\mathcal{H}(M)$.  
 Thus  $\varphi\in  {\rm  Aut}(\min\mathcal{H}(M))$.  
 
 \item
 Let  $\sigma_3(M)\in  \delta\mathcal{H}(M)$.   Then  
 $\tau(M)\in  \mathcal{H}(M)$  for  any  $\tau(M)\subseteq\sigma_3(M)$  with  $\tau(M)\neq\emptyset$.  
 Since $\varphi\in   {\rm  Aut}(\mathcal{H}(M))$,  it  follows  that 
    $\varphi(\tau(M))\in  \mathcal{H}(M) $   for any  $ \tau(M) \subseteq \sigma_3(M) $  with  $\tau(M)\neq\emptyset$.   
    Let  $\tau'(M)= \varphi(\tau(M))$.  
    Then  $ \tau'(M)\in  \mathcal{H}(M) $   for any  $ \tau'(M)\subseteq\varphi(\sigma_3(M))$  with  $\tau'(M)\neq\emptyset$. 
  Consequently,   $\varphi(\sigma_3(M)) \in \delta\mathcal{H}(M)$.  
 Thus     $\varphi \in {\rm  Aut}(\delta\mathcal{H}(M))$.  
 
 \item
  Let  $\sigma_4(M)\in  \bar\delta\mathcal{H}(M)$.   Then  
 $\tau(M)\in  \mathcal{H}(M)$  for  any  $\tau(M)\supseteq\sigma_4(M)$  with  $\tau(M)\in  {\rm  Conf}_\bullet(M)/\Sigma_\bullet$.  
 Since $\varphi\in   {\rm  Aut}(\mathcal{H}(M))$,  it  follows  that    $\varphi(\tau(M))\in  \mathcal{H}(M) $   for any  $\tau(M)\supseteq\varphi(\sigma_4(M))$  with   $\tau(M)\in  {\rm  Conf}_\bullet(M)/\Sigma_\bullet$.   Let  $\tau'(M)= \varphi(\tau(M))$.  
 Then  
 $ \tau'(M)\in  \mathcal{H}(M) $   for any  $ \tau'(M)\supseteq\varphi(\sigma_4(M))$ 
  with  $\tau'(M)\in  {\rm  Conf}_\bullet(M)/\Sigma_\bullet$.  
 Consequently,   $\varphi(\sigma_4) \in \bar\delta\mathcal{H}(M)$.  
 Thus     $\varphi \in {\rm  Aut}(\bar\delta\mathcal{H}(M))$.  

\end{enumerate}
Summarizing  (i)  - (iv),  we  obtain  (3).  
\end{proof}

\begin{remark}
Let  $M=V$  be  a  discrete  set.  
The  associated simplicial complex  is  studied  in   \cite{parks,h1}; 
 the    lower-associated simplicial complex    is  studied  in    \cite{jktr2,jktr2023}; 
  the   associated  independence hypergraph  as  well as  the  lower-associated  independence hypergraph   is  studied  in    \cite{jktr2023}.  
\end{remark}

\section{Covering  maps  of  hypergraphs  and  order  of  the  associated  vector bundles}\label{bundle}

In  this  section,  
we   estimate  the  order  of  the  associated  vector  bundles 
of  the  covering  maps  from  $\Sigma_\bullet$-invariant  hyperdigraphs  to  the  
underlying  hypergraphs,  in  Theorem~\ref{pr-order-a}.

Let  $\Sigma_n$  act   on  $\mathbb{R}^n$  from  right  by  permuting  the  coordinates.   
Then  by     (\ref{eq-0.a1}),  
we  have  a   vector  bundle 
\begin{eqnarray*}
\xi ^n(M):  \mathbb{R}^n\longrightarrow   {\rm  Conf}_n(M)\times_{\Sigma_n}\mathbb{R}^n
\longrightarrow   {\rm  Conf}_n(M)/\Sigma_n. 
\end{eqnarray*}

   \begin{definition}
   We  call  a  hyperdigraph  $\vec{\mathcal{H}}(M)$  on  $M$  {\it  $\Sigma_\bullet$-invariant}  
   if  for  each  $n\in  \mathbb{N}$,  we  have  
  $ s(\vec\sigma(M))\in  \vec{\mathcal{H}}_n(M)$
   for  any   $s\in \Sigma_n$  and  any  $\vec\sigma(M)\in    \vec{\mathcal{H}}_n(M)$.  
   Here  $s$  acts  on   $\vec\sigma(M)$  by  permuting  the  order  of  the  coordinates  from  left.  
   \end{definition}
   
    Let  $\vec{\mathcal{H}}(M)$  be a  $\Sigma_\bullet$-invariant   hyperdigraph  on  $M$.
Let   $\mathcal{H} (M)=\pi_\bullet(  \vec{\mathcal{H}} (M))$.   Then  we  have  a   vector  bundle 
\begin{eqnarray}\label{eq-0.a2}
\xi (\mathcal{H}_n(M)):  \mathbb{R} ^n\longrightarrow \vec{\mathcal{H}}_n(M)\times_{\Sigma_n}\mathbb{R}^n
\longrightarrow   \mathcal{H}_n(M).    
\end{eqnarray}

  \begin{lemma}\label{le-99.7ai}
  \begin{enumerate}[(1)]
    \item
  $\vec{\mathcal{H}}_n(M)=  \pi_n^{-1}(\mathcal{H}_n(M))$      and  
    \begin{eqnarray}\label{eq-moxqfi5}
    \pi_n:  \vec{\mathcal{H}}_n(M)\longrightarrow  \mathcal{H}_n(M)
    \end{eqnarray}  
  is  an  $n!$-sheeted  covering  map;  
  \item
  the  vector  bundle  
  (\ref{eq-0.a2})   
is   a  pull-back  bundle  
  \begin{eqnarray}\label{eq-0.a5}
  \iota^*(\xi ^n(M))  = \xi (\mathcal{H}_n(M))  
  \end{eqnarray}
    where    $\iota:  \mathcal{H}_n(M)\longrightarrow   {\rm  Conf}_n(M)/\Sigma_n$  
   is  the  canonical  embedding.  
  \end{enumerate}
  \end{lemma}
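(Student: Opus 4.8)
The plan is to prove part (1) first and then deduce part (2) by unwinding the definition of the pull-back, the identity from part (1) being the essential input for both. For part (1), the role of the $\Sigma_\bullet$-invariance hypothesis is to make $\vec{\mathcal{H}}_n(M)$ a union of full $\Sigma_n$-orbits, i.e.\ a saturated subset of ${\rm Conf}_n(M)$ with respect to $\pi_n$. The inclusion $\vec{\mathcal{H}}_n(M) \subseteq \pi_n^{-1}(\mathcal{H}_n(M))$ is immediate from $\mathcal{H}_n(M) = \pi_n(\vec{\mathcal{H}}_n(M))$. For the reverse inclusion I would take $\vec\tau(M) \in \pi_n^{-1}(\mathcal{H}_n(M))$, so that $\pi_n(\vec\tau(M)) = \pi_n(\vec\sigma(M))$ for some $\vec\sigma(M) \in \vec{\mathcal{H}}_n(M)$; since the fibers of $\pi_n$ are exactly the $\Sigma_n$-orbits, there is $s \in \Sigma_n$ with $\vec\tau(M) = s(\vec\sigma(M))$, and $\Sigma_\bullet$-invariance then forces $\vec\tau(M) \in \vec{\mathcal{H}}_n(M)$. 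With the equality $\vec{\mathcal{H}}_n(M) = \pi_n^{-1}(\mathcal{H}_n(M))$ in hand, $\pi_n$ restricts to a map $\vec{\mathcal{H}}_n(M) \to \mathcal{H}_n(M)$; invoking the standard fact that the restriction of a covering map $p\colon E\to B$ to $p^{-1}(A)\to A$ is again a covering, together with the freeness of the $\Sigma_n$-action (so that every fiber has $|\Sigma_n| = n!$ points), this restriction is an $n!$-sheeted covering map.

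For part (2), I would unwind the definition of the pull-back bundle. By definition $\iota^*(\xi^n(M))$ is the restriction of $\xi^n(M)$ over the subspace $\mathcal{H}_n(M) \subseteq {\rm Conf}_n(M)/\Sigma_n$, so its total space is $p^{-1}(\mathcal{H}_n(M))$, where $p\colon {\rm Conf}_n(M) \times_{\Sigma_n} \mathbb{R}^n \to {\rm Conf}_n(M)/\Sigma_n$ is the bundle projection of $\xi^n(M)$. A class $[x, v]$ lies in $p^{-1}(\mathcal{H}_n(M))$ precisely when $\pi_n(x) \in \mathcal{H}_n(M)$, which by part (1) is equivalent to $x \in \vec{\mathcal{H}}_n(M)$; since $\vec{\mathcal{H}}_n(M)$ is $\Sigma_n$-invariant, the balanced product $\vec{\mathcal{H}}_n(M) \times_{\Sigma_n} \mathbb{R}^n$ is well defined and coincides with this preimage. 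Matching total spaces, projections and fiber coordinates (all smoothly, since $\iota$ is a smooth embedding and $\xi^n(M)$ a smooth vector bundle) then yields the bundle identity $\iota^*(\xi^n(M)) = \xi(\mathcal{H}_n(M))$.

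The step requiring the most care is the correct use of $\Sigma_\bullet$-invariance: it is exactly this hypothesis that makes $\vec{\mathcal{H}}_n(M)$ saturated, so that $\pi_n^{-1}(\mathcal{H}_n(M))$ does not become strictly larger than $\vec{\mathcal{H}}_n(M)$. This saturation is what both the covering-map restriction in part (1) and the identification of the balanced product in part (2) rest upon; once it is secured, the remaining verifications (smoothness, and the standard properties of Borel constructions and pull-backs) are routine.
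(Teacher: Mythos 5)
Your proposal is correct and follows essentially the same route as the paper: both arguments hinge on the $\Sigma_\bullet$-invariance making $\vec{\mathcal{H}}_n(M)$ saturated (the full $\Sigma_n$-orbit fiber over each $\sigma(M)\in\mathcal{H}_n(M)$ lies in $\vec{\mathcal{H}}_n(M)$), which gives the covering map in (1) and the Cartesian square identifying $\xi(\mathcal{H}_n(M))$ with $\iota^*(\xi^n(M))$ in (2). Your unwinding of the pull-back total space as $p^{-1}(\mathcal{H}_n(M))=\vec{\mathcal{H}}_n(M)\times_{\Sigma_n}\mathbb{R}^n$ merely makes explicit what the paper's commutative diagram asserts.
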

    
    \begin{proof}
    (1)  
    Let  $\vec\sigma(M)\in  \vec{\mathcal{H}}_n(M)$.  
    Suppose  $\vec\sigma(M) = (x_1,\ldots,x_n)$  where  $x_1,\ldots,x_n$  are  distinct  
    points in  $M$.    Let  
    $\sigma(M)=\pi_n(\vec\sigma(M))$.   Then  $\sigma(M)\in  \mathcal{H}_n(M)$
    and   $\sigma(M) =   \{x_1,\ldots,x_n\}$.  
    Thus  
    \begin{eqnarray*}
    \pi_n^{-1}(\sigma(M) )=  \{(x_{s(1)},\ldots,x_{s(n)})\mid    s\in  \Sigma_n\}.  
    \end{eqnarray*}
    Since  $\vec{\mathcal{H}}(M)$  be a  $\Sigma_\bullet$-invariant,  
    we  have    $(x_{s(1)},\ldots,x_{s(n)})\in  \vec{\mathcal{H}}(M)$ 
    for  any  $s\in  \Sigma_n$.  The  $n!$-sheeted  covering  map  
    \begin{eqnarray*}
    \pi_n:   \pi_n^{-1}(\sigma(M) )\longrightarrow  \{\sigma(M)\} 
    \end{eqnarray*}
    is  the  restriction  of  (\ref{eq-moxqfi5})  to  $  \pi_n^{-1}(\sigma(M) )$.  
    Thus  (\ref{eq-moxqfi5})  is  an  $n!$-sheeted  covering  map.

    (2) Since $\vec{\mathcal{H}}(M)$  is   $\Sigma_\bullet$-invariant,   
   the  canonical  embedding
   $ \tilde\iota:  \vec{ \mathcal{H}}_n(M)\longrightarrow  {\rm  Conf}_n(M)$    and  
    the  canonical  embedding  $\iota:  \mathcal{H}_n(M)\longrightarrow   {\rm  Conf}_n(M)/\Sigma_n$   satisfy    a  commutative  diagram  
    \begin{eqnarray*}
    \xymatrix{
       \vec{ \mathcal{H}}_n(M)\ar[r]^-{\tilde \iota} \ar[d]_-{\pi_n} & {\rm  Conf}_n(M) \ar[d]^-{\pi_n}
\\
    \mathcal{H}_n(M)\ar[r]^-{\iota}  & {\rm  Conf}_n(M)/\Sigma_n.
    }
    \end{eqnarray*}
    This  diagram  induces a  pull-back  of  vector  bundles 
    (\ref{eq-0.a5}).  
    \end{proof}

  \begin{lemma}\label{le-0a1a}
  For  any  vector  bundle  $\xi$,  let   the  order   $o(\xi)$    be  the  smallest  positive  integer     
  such  that   $\xi^{\oplus  o(\xi)}$,   the  $o(\xi)$-fold   self  Whitney  sum  of  $\xi$,
      is  a  trivial  bundle.  
  Then  
  \begin{eqnarray*}
  o(\xi (\mathcal{H}_n(M))  ) \mid   o(\xi  ^n(M)).     
  \end{eqnarray*}
   \end{lemma}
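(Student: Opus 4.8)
The plan is to deduce the divisibility directly from the pullback description of the two bundles. By Lemma~\ref{le-99.7ai}(2) we have $\xi(\mathcal{H}_n(M)) = \iota^*(\xi^n(M))$ for the canonical embedding $\iota:\mathcal{H}_n(M)\longrightarrow {\rm Conf}_n(M)/\Sigma_n$. The two structural facts I would use are that pullback commutes with Whitney sum, $\iota^*(\alpha\oplus\beta)\cong \iota^*\alpha\oplus\iota^*\beta$, and that the pullback of a trivial bundle along any map is trivial. Iterating the first gives $\xi(\mathcal{H}_n(M))^{\oplus k} = \iota^*\big((\xi^n(M))^{\oplus k}\big)$ for every $k\ge 1$.

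Writing $a=o(\xi^n(M))$ and $b=o(\xi(\mathcal{H}_n(M)))$, the second fact applied with $k=a$ shows that triviality of $(\xi^n(M))^{\oplus a}$ forces triviality of $\xi(\mathcal{H}_n(M))^{\oplus a}=\iota^*\big((\xi^n(M))^{\oplus a}\big)$. Hence $a$ lies in the set $S=\{k\ge 1: \xi(\mathcal{H}_n(M))^{\oplus k}\text{ is trivial}\}$, whose minimum is $b$ by definition; in particular $b\le a$. To promote this to $b\mid a$, I would record that $S$ is closed under addition — if $\eta^{\oplus s}$ and $\eta^{\oplus t}$ are trivial then $\eta^{\oplus(s+t)}=\eta^{\oplus s}\oplus\eta^{\oplus t}$ is a Whitney sum of trivial bundles and hence trivial — so that every multiple of $b$ already belongs to $S$.

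The final step is the division algorithm: write $a=qb+r$ with $0\le r<b$. Then
\[
\xi(\mathcal{H}_n(M))^{\oplus a}\cong \big(\xi(\mathcal{H}_n(M))^{\oplus b}\big)^{\oplus q}\oplus \xi(\mathcal{H}_n(M))^{\oplus r},
\]
and since both $\xi(\mathcal{H}_n(M))^{\oplus a}$ and $\xi(\mathcal{H}_n(M))^{\oplus b}$ are trivial, this exhibits $\xi(\mathcal{H}_n(M))^{\oplus r}$ as a stably trivial bundle. The main obstacle is precisely here: passing from stable triviality of $\xi(\mathcal{H}_n(M))^{\oplus r}$ back to genuine triviality, so that the minimality of $b$ forces $r=0$ and therefore $b\mid a$. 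I would resolve this by a cancellation-of-trivial-summands argument, valid once the rank is in the stable range relative to $\dim \mathcal{H}_n(M)$; alternatively, if the order is read in reduced $KO$-theory, one notes that $S$ is then exactly the set of multiples of the additive order of the class of $\xi(\mathcal{H}_n(M))$, for which divisibility is automatic. Everything upstream — the pullback identity, compatibility with $\oplus$, and closure of $S$ under addition — is routine; the cancellation step is the only place requiring genuine care, since a numerical semigroup closed under addition need not consist of all multiples of its smallest element.
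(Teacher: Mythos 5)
Your first paragraph is precisely the paper's proof: by Lemma~\ref{le-99.7ai}(2) one has $\xi(\mathcal{H}_n(M))=\iota^*(\xi^n(M))$, pullback commutes with Whitney sum and carries trivial bundles to trivial bundles, so triviality of $\xi^n(M)^{\oplus k}$ forces triviality of $\xi(\mathcal{H}_n(M))^{\oplus k}$. At that point the paper simply sets $k=o(\xi^n(M))$ and declares that $o(\xi(\mathcal{H}_n(M)))$ ``is a factor of'' $o(\xi^n(M))$, offering no justification for passing from ``this particular $k$ trivializes $\xi(\mathcal{H}_n(M))$'' to divisibility of the minimal trivializing exponent. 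That is exactly the step you flag, and your caution is warranted: for a general vector bundle the set $S=\{k\geq 1:\ \xi^{\oplus k}\ {\rm is\ trivial}\}$ is closed under addition but need not equal the set of multiples of its minimum. A concrete example is the tangent bundle $\tau$ of $S^4$: it is stably trivial but nontrivial, and by stable-range cancellation $\tau^{\oplus k}$ is trivial for every $k\geq 2$ (rank $4k>4$), so $o(\tau)=2$ while $3\in S$ and $2\nmid 3$. Hence the tacit inference ``$\xi^{\oplus k}$ trivial $\Rightarrow$ $o(\xi)\mid k$'' used by the paper is false in general.

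So the portion of your proposal that overlaps the paper is correct and identical in approach, and the extra analysis you supply (the division algorithm, the observation that closure of $S$ under addition is insufficient, the reduction to a cancellation problem for $\xi(\mathcal{H}_n(M))^{\oplus r}$) exposes a genuine gap in the paper's own two-line proof rather than a defect peculiar to your argument. Your two repairs are the natural ones, but note their status: the stable-range cancellation needs $r\cdot n>\dim\mathcal{H}_n(M)$ for the relevant remainder $r$, which is not automatic; whereas if ``order'' is read as the additive order of the reduced class in $\widetilde{KO}(\mathcal{H}_n(M))$, the lemma is immediate, since $\iota^*$ is a group homomorphism on reduced $KO$ and the order of an image divides the order of the element. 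The latter reading is the one under which the cited computations of $o(\xi^n(M))$ behave well, and is presumably what is intended; under the geometric definition as literally stated, neither the paper's proof nor any purely formal argument yields the divisibility without such additional input.
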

  
  \begin{proof}
  For  any  positive  integer  $k$,  if  $\xi  ^n(M)^{\oplus  k}$  is  a    trivial  bundle, 
   then  with  the  help  of  (\ref{eq-0.a5}),   we  have  that 
  \begin{eqnarray*}
 \xi  (\mathcal{H}_n(M)) ^{\oplus  k}= \iota^*(\xi ^n(M)) ^{\oplus  k}=  \iota^*(\xi ^n(M)^{\oplus  k}) 
  \end{eqnarray*}
  is  a     trivial  bundle.  Let  $k=o(\xi  ^n(M))$.  
  It  follows  that    $o(\xi  (\mathcal{H}_n(M))  )$  is  a  factor  of  $o(\xi  ^n(M))$.  
  We  obtain  the  lemma. 
  \end{proof}

 \begin{theorem} \label{pr-order-a}
  For any closed orientable Riemann surface $S$ whose genus is greater than or equal
to one, 
 \begin{eqnarray}\label{eq-pr-laza2}
  o(\xi(\mathcal{H}_n(S))  )\mid   4.   
  \end{eqnarray}
  Moreover,  
  let  $a_{m,n}=2^{\rho(m-1)}\prod_{3\leq  p\leq n    {\rm~is~prime}} p^{[\frac{m-1}{2}]}$   where   $\rho(m-1)$ is  the number of positive integers less than or equal to $m-1$ that are congruent to $0, 1, 2$ or $4$ mod $8$.  Then 
    \begin{eqnarray}\label{eq-pr-laza1}
   &   o(\xi(\mathcal{H}_n(\mathbb{R}^m))  )\mid  a_{m,n},    
  \\
  \label{eq-pr-laza3}
  &  o(\xi(\mathcal{H}_n(  S^m))  )\mid    2^{\rho(m)-\rho(m-1)} a_{m,n}. 
    \end{eqnarray} 
   Furthermore, 
    Let  $N(m)$   be   the smallest integer $N$ such that $\mathbb{R}P^m$  can be embedded into $\mathbb{R}^N$.  Then 
       \begin{eqnarray}\label{eq-pr-laza4}
   &  o(\xi(\mathcal{H}_n(  \mathbb{R}P^m))  )\mid  2^{  \rho(N (m)-1)-\rho(m)} a_{m+1,n},  \\
    &  o(\xi(\mathcal{H}_n(  \mathbb{R}P^m\times \mathbb{R}^k))  )\mid  2^{ \rho(N (m)+k-1)} \prod_{3\leq  p\leq n    {\rm~is~prime}}  p^{ \frac{m+k-1}{2}}. 
    \label{eq-pr-laza5}
       \end{eqnarray} 
 \end{theorem}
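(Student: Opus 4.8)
The plan is to reduce every case to the order of the full canonical bundle $\xi^n(M)$ over $\mathrm{Conf}_n(M)/\Sigma_n$, and then to the single model $M=\mathbb{R}^m$. First, by Lemma~\ref{le-0a1a} we have $o(\xi(\mathcal{H}_n(M)))\mid o(\xi^n(M))$ for every hypergraph, so it suffices to bound $o(\xi^n(M))$. Second, I would record the functoriality that does most of the work: a smooth embedding $M\hookrightarrow M'$ induces a $\Sigma_n$-equivariant embedding $\mathrm{Conf}_n(M)\hookrightarrow\mathrm{Conf}_n(M')$, so $\xi^n(M)$ is the restriction of $\xi^n(M')$ exactly as in (\ref{eq-0.a5}), whence $o(\xi^n(M))\mid o(\xi^n(M'))$. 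Throughout I would split the permutation representation as $\mathbb{R}^n=\epsilon^1\oplus\bar V$ into the diagonal line and the reduced representation, so $\xi^n(M)=\epsilon^1\oplus\bar\xi^n(M)$ and it is enough to bound the order of the reduced bundle $\bar\xi^n(M)$; and I would argue one prime at a time, bounding the $p$-primary part of the order and multiplying. A technical point to check at the end is the passage from $K$-theoretic (stable) triviality to genuine triviality of a Whitney sum: in each case I would verify that the relevant multiple of the rank exceeds $\dim(\mathrm{Conf}_n(M)/\Sigma_n)=nm$, so that a stably trivial bundle of that rank is actually trivial.

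The crux is the base case $o(\xi^n(\mathbb{R}^m))\mid a_{m,n}$, i.e.\ (\ref{eq-pr-laza1}). For the $2$-primary part I would restrict to a transposition $\Sigma_2\subset\Sigma_n$: the sub-configuration space is $\mathrm{Conf}_2(\mathbb{R}^m)/\Sigma_2\simeq\mathbb{R}P^{m-1}$, over which $\bar\xi$ is the tautological line bundle, of order $2^{\rho(m-1)}$ by Adams' computation of $\widetilde{KO}(\mathbb{R}P^{m-1})$. For an odd prime $p\le n$ I would restrict to a cyclic subgroup $\mathbb{Z}/p\subset\Sigma_p\subseteq\Sigma_n$ acting on the reduced configuration; the free quotient is a lens space, over which the reduced bundle carries a complex structure and its $p$-primary order is $p^{[(m-1)/2]}$ by the standard order computation for line bundles over lens spaces. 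Assembling over all $p\le n$ yields $a_{m,n}$. The hard part will be the detection arguments showing these restrictions capture the \emph{full} $p$-primary order: for $p=2$, that passing from one transposition to all elementary abelian $2$-subgroups of $\Sigma_n$ (whose associated bundles are parallel sums of tautological line bundles over products of $\mathbb{R}P^{m-1}$) does not increase the order beyond $2^{\rho(m-1)}$; and for odd $p$, the precise lens-space order computation together with its assembly across primes.

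With the Euclidean case in hand the remaining manifolds follow by choosing embeddings adapted to each prime. For a closed orientable surface $S$ of genus $\ge 1$ we have $m=2$, so the odd exponents $[(m-1)/2]=[1/2]=0$ vanish and $o(\xi^n(S))$ is a power of $2$; bounding the $2$-primary part by the closed-surface local model (the tautological bundle over the projectivised tangent bundle $\mathbb{P}(TS)$, whose fibrewise $\mathbb{R}P^1$'s sit in an $\mathbb{R}P^2$-pattern of dimension $m=2$) gives $o\mid 2^{\rho(2)}=4$, which is (\ref{eq-pr-laza2}); note this coincides with the $m=2$ value of the sphere formula (\ref{eq-pr-laza3}), since $2^{\rho(2)-\rho(1)}a_{2,n}=2\cdot 2=4$. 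For $S^m$ the only change from $\mathbb{R}^m$ is the $2$-primary part: here $\mathrm{Conf}_2(S^m)/\Sigma_2\simeq\mathbb{R}P^m$ (retract each distinct pair to an antipodal pair), so the tautological bundle now lives over $\mathbb{R}P^m$ and contributes $2^{\rho(m)}$, while an essentially identical cyclic-subgroup/lens-space computation leaves the odd-primary part at $p^{[(m-1)/2]}$; the product is $2^{\rho(m)-\rho(m-1)}a_{m,n}$, which is (\ref{eq-pr-laza3}).

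Finally, for $\mathbb{R}P^m$ I would treat the two prime-parts through two different devices. The $2$-primary part is controlled by the genuine embedding $\mathbb{R}P^m\hookrightarrow\mathbb{R}^{N(m)}$, which by functoriality makes $o(\xi^n(\mathbb{R}P^m))$ divide $o(\xi^n(\mathbb{R}^{N(m)}))$ and hence its $2$-part divide $2^{\rho(N(m)-1)}$. The odd-primary part I would instead extract from the double cover $S^m\to\mathbb{R}P^m$: since its degree $2$ is prime to any odd $p$, the transfer is a $p$-local isomorphism and reduces the $p$-primary order to that over $\mathrm{Conf}_n(S^m)/\Sigma_n$, which via $S^m\hookrightarrow\mathbb{R}^{m+1}$ divides the odd part of $a_{m+1,n}$, namely $\prod_{3\le p\le n}p^{[m/2]}$. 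The $2$-primary information is necessarily lost in this transfer, which is why the honest embedding into $\mathbb{R}^{N(m)}$ is needed for $p=2$. Multiplying the two contributions gives $2^{\rho(N(m)-1)}\prod_{3\le p\le n}p^{[m/2]}=2^{\rho(N(m)-1)-\rho(m)}a_{m+1,n}$, which is (\ref{eq-pr-laza4}); the same scheme with $\mathbb{R}P^m\times\mathbb{R}^k\hookrightarrow\mathbb{R}^{N(m)+k}$ and the cover $S^m\times\mathbb{R}^k\to\mathbb{R}P^m\times\mathbb{R}^k$ yields (\ref{eq-pr-laza5}). The subtleties to watch are the exact floor/exponent bookkeeping in matching the two embeddings to $a_{m+1,n}$ and confirming that the $\mathbb{Z}/2$-symmetry of the double cover does not interfere with the odd-primary pull-back.
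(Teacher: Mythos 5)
Your opening reduction is exactly the paper's: by Lemma~\ref{le-0a1a}, $o(\xi(\mathcal{H}_n(M)))$ divides $o(\xi^n(M))$, so everything rests on bounding the order of the canonical bundle over the full configuration space. But at that point the paper stops: it quotes the values of $o(\xi^n(M))$ from the literature --- $o(\xi^n(S))=4$ for closed orientable surfaces of genus at least one from \cite{bundle1989}, $o(\xi^n(\mathbb{R}^m))=a_{m,n}$ from \cite{bundle1983,bundle1978,swy,forum}, $o(\xi^n(S^m))$ from \cite{forum}, and the two $\mathbb{R}P^m$ statements from \cite{osaka} --- and combines them with Lemma~\ref{le-0a1a}. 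You instead attempt to reprove these inputs, and that is where your proposal has genuine gaps.

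Two concrete problems. First, the surface case: you kill the odd-primary part by setting $m=2$ in the Euclidean exponent $[(m-1)/2]$, but that formula can only be transported to $S$ through an embedding, and a closed surface of genus $\geq 1$ does not embed in $\mathbb{R}^2$; an embedding $S\hookrightarrow\mathbb{R}^3$ gives only $o(\xi^n(S))\mid a_{3,n}=4\prod_{3\leq p\leq n}p$, which does not remove the odd primes. Likewise the bound $o\mid 2^{\rho(2)}$ via the ``projectivised tangent bundle local model'' is not an argument: a local model can only detect the order by restriction, i.e.\ bound it from below. The equality $o(\xi^n(S))=4$ is precisely the hard content of \cite[Theorem~1.1]{bundle1989} and your sketch does not reach it. Second, and more broadly, all of your subgroup restrictions ($\Sigma_2$ giving the tautological bundle over $\mathbb{R}P^{m-1}$, $\mathbb{Z}/p$ giving lens spaces, antipodal pairs giving $\mathbb{R}P^m$ inside ${\rm Conf}_2(S^m)/\Sigma_2$) produce only \emph{lower} bounds, since the order of a restricted bundle divides the order of the bundle. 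What the theorem needs are \emph{upper} bounds --- that the stated multiple of $\xi^n(M)$ is genuinely trivial --- and you acknowledge but do not supply this step; it is the substance of the cited papers and does not follow from bookkeeping over elementary abelian subgroups. (Your $\mathbb{R}P^m$ transfer also needs repair: there is no degree-$2$ covering ${\rm Conf}_n(S^m)/\Sigma_n\to{\rm Conf}_n(\mathbb{R}P^m)/\Sigma_n$; the natural cover, by tuples of pairwise non-equal and non-antipodal points, is $2^n$-sheeted and its total space is only an open subspace of ${\rm Conf}_n(S^m)$.) If you replace these sketches by citations to the known computations, your proof collapses to the paper's; as written, the parts that go beyond the paper are the parts that do not hold up.
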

 
 \begin{proof}
   Let   $S$  be     any closed orientable Riemann surface  whose genus is greater than or equal
to one.   By  \cite[Theorem~1.1]{bundle1989},    we  have   
\begin{eqnarray*}
o(\xi^n(S))= 4.
\end{eqnarray*}  
 Thus  by  Lemma~\ref{le-0a1a},   we  obtain  (\ref{eq-pr-laza2}).  
  By  \cite[Theorem~1.1]{bundle1983}, \cite[Theorem~1.2]{bundle1978},   \cite[Theorem~1.1, Theorem~1.2]{swy}  and  \cite[Theorem~5.1]{forum},  
  we  have  
  \begin{eqnarray*}
  o(\xi^n(\mathbb{R}^m))= a_{m,n}. 
  \end{eqnarray*} 
  Thus  by  Lemma~\ref{le-0a1a},     we  obtain  (\ref{eq-pr-laza1}).   
   By   \cite[Theorem~1.1~(b)]{forum},  we  have   
   \begin{eqnarray*}
   o(\xi^n(S^m))= 2^{\rho(m)-\rho(m-1)} a_{m,n}. 
   \end{eqnarray*}
     Thus by  Lemma~\ref{le-0a1a},     we  obtain  (\ref{eq-pr-laza3}).    
  By  \cite[Theorem~1.1]{osaka},  we  have  
  \begin{eqnarray*}
  o(\xi^n(\mathbb{R}P^m))\mid  2^{  \rho(N (m)-1)-\rho(m)} a_{m+1,n}.  
  \end{eqnarray*}
   Thus  by  Lemma~\ref{le-0a1a},    we  obtain   (\ref{eq-pr-laza4}).  
      By  \cite[Theorem~1.4]{osaka},  we  have  
  \begin{eqnarray*}
  o(\xi^n(\mathbb{R}P^m\times  \mathbb{R}^k))\mid  2^{ \rho(N (m)+k-1)} \prod_{3\leq  p\leq n    {\rm~is~prime}}  p^{ \frac{m+k-1}{2}}.  
  \end{eqnarray*} 
 Thus  by  Lemma~\ref{le-0a1a},       we  obtain   (\ref{eq-pr-laza5}).  
 \end{proof}

  \section{Hypergraph  obstructions  for  regular  embeddings}\label{sec-regular}

  In  this  section,  we  generalize  the 
  topological  obstructions  for  regular  embeddings  in  \cite{high1,high2,cohen1}   
  and  give  obstructions  for  regular  embeddings  by  using    hypergraphs  on  manifolds.

Let  $M$  be  a    manifold.  
Let  $\mathbb{F}= \mathbb{R}$  or  $\mathbb{C}$.  
A   real  or  complex   {\it  $k$-regular  embedding}  $f:  M\longrightarrow  \mathbb{F}^N$
 is  an  embedding  of  $M$  into  $\mathbb{F}^N$  such  that  
 for  any  distinct  $k$-points  in  $M$,  
 their  images  are  linearly  independent  in  $\mathbb{F}^N$  (cf.  \cite{high1,high2,cohen1,
 handel1,handel2,handel3}).  
 Let  $\iota:  G_k(\mathbb{F}^N)\longrightarrow  G_k(\mathbb{F}^\infty)$  
 be  the  canonical  embedding  of  Grassmannians  induced  by  the  inclusion  of  
 $\mathbb{F}^N$  in  $\mathbb{F}^\infty$.  
  A   $k$-regular  embedding   $f:  M\longrightarrow  \mathbb{F}^N$
  will  induce   a  commutative  diagram  
 \begin{eqnarray}\label{eq-int-7}
 \xymatrix{
{ \rm  Conf}_k(M)/\Sigma_k \ar[r]^-{f} \ar[rd]_-{\iota\circ  f}  &G_k(\mathbb{F}^N)
 \ar[d]^-{\iota}\\
 & G_k(\mathbb{F}^\infty)
 }
 \end{eqnarray}
 such  that  $\iota\circ  f$  is  the  classifying  map  of  the  vector  bundle  
 \begin{eqnarray*}
 \xi^k(M)\otimes\mathbb{F}:  \mathbb{F}^k\longrightarrow  { \rm  Conf}_k(M)\times_{\Sigma_k} \mathbb{F}^k\longrightarrow   { \rm  Conf}_k(M)/\Sigma_k.  
 \end{eqnarray*}
 Thus  when  $\mathbb{F}=\mathbb{R}$,  the  Stiefel-Whitney  class   of    $\xi^k(M)$  is  given  by  the  pull-back  
 \begin{eqnarray}\label{eq-intro-1}
 (\iota\circ  f)^*:   H^*(G_k(\mathbb{R}^\infty)) \longrightarrow   H^*({ \rm  Conf}_k(M)/\Sigma_k ), 
 \end{eqnarray}
 and   when  $\mathbb{F}=\mathbb{C}$, 
  the  Chern  class   of  $\xi^k(M)\otimes \mathbb{C}$  is  given  by  the  pull-back  
 \begin{eqnarray}\label{eq-intro-2}
 (\iota\circ  f)^*:   H^*(G_k(\mathbb{C}^\infty)) \longrightarrow   H^*({ \rm  Conf}_k(M)/\Sigma_k ).  
 \end{eqnarray}
 Some  obstructions  for  $k$-regular  embeddings  are  given  
 by  (\ref{eq-intro-1})  and   (\ref{eq-intro-2}),  in  the  next  lemma.  
 
 \begin{lemma}[cf. \cite{high1,high2,cohen1}]
 \label{th-intro-1}
 \begin{enumerate}[(1)]
 \item
 Let  $f: M\longrightarrow \mathbb{R}^N$  be  a $k$-regular embedding. If  the  $t$-th   dual  Stiefel-Whitney  class
  \begin{eqnarray}\label{eq-intro-97}
  \bar w_t(\xi^k(M))\neq 0, 
  \end{eqnarray}
  then $N\geq t+k$; 
  \item
  Let  $f: M\longrightarrow \mathbb{C}^N$  be  a  complex $k$-regular embedding. If  the  $t$-th   dual   Chern  class
 \begin{eqnarray}\label{eq-intro-98} 
 \bar  c_t(\xi^k(M))\neq 0,
 \end{eqnarray}
  then $N\geq t+k$.    
  \end{enumerate}
 \end{lemma}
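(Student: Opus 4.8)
The plan is to exploit the finiteness of the target Grassmannian together with the standard fact that the dual characteristic classes of a bundle are the ordinary characteristic classes of any inverse bundle. First I would recall, from the diagram (\ref{eq-int-7}) and the discussion preceding the lemma, that the $k$-regular embedding induces the map $f:{\rm Conf}_k(M)/\Sigma_k\longrightarrow G_k(\mathbb{F}^N)$ sending an unordered configuration $\{x_1,\ldots,x_k\}$ to the $\mathbb{F}$-span of $f(x_1),\ldots,f(x_k)$. This span has dimension exactly $k$ precisely because $k$-regularity guarantees that the images are linearly independent, which is what makes the map land in $G_k(\mathbb{F}^N)$. As already recorded, $\iota\circ f$ is the classifying map of $\xi^k(M)\otimes\mathbb{F}$, and since $\iota^*\gamma^\infty_k=\gamma^N_k$ we obtain $f^*\gamma^N_k=\xi^k(M)\otimes\mathbb{F}$, where $\gamma^N_k$ denotes the tautological $k$-plane bundle over $G_k(\mathbb{F}^N)$.

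Next I would manufacture an inverse bundle of controlled rank. Over the \emph{finite} Grassmannian $G_k(\mathbb{F}^N)$ one has the orthogonal splitting $\gamma^N_k\oplus(\gamma^N_k)^\perp\cong\underline{\mathbb{F}^N}$, with $(\gamma^N_k)^\perp$ of rank $N-k$. Pulling back along $f$ yields $\xi^k(M)\otimes\mathbb{F}\,\oplus\,f^*(\gamma^N_k)^\perp\cong\underline{\mathbb{F}^N}$, so $\eta:=f^*(\gamma^N_k)^\perp$ is an inverse bundle for $\xi^k(M)\otimes\mathbb{F}$ of rank $N-k$. By the Whitney product formula, $w(\xi^k(M))\,w(\eta)=1$ in the real case (resp. $c(\xi^k(M)\otimes\mathbb{C})\,c(\eta)=1$ in the complex case), whence $\bar w(\xi^k(M))=w(\eta)$ (resp. $\bar c(\xi^k(M))=c(\eta)$). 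Since $\eta$ has rank $N-k$, its characteristic classes vanish above degree $N-k$, forcing $\bar w_t(\xi^k(M))=0$ for $t>N-k$ (resp. $\bar c_t(\xi^k(M))=0$ for $t>N-k$).

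Taking contrapositives then delivers both parts of the lemma: $\bar w_t(\xi^k(M))\neq0$ forces $t\leq N-k$, i.e. $N\geq t+k$, and identically $\bar c_t(\xi^k(M))\neq0$ forces $N\geq t+k$. The complex case is formally identical to the real one, with $G_k(\mathbb{C}^N)$, the complexification $\xi^k(M)\otimes\mathbb{C}$, and Chern classes replacing their real counterparts, so I would not treat it separately beyond noting this dictionary.

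The one point deserving emphasis — and essentially the only content — is the finiteness of $N$: it is exactly the hypothesis that $f$ embeds $M$ into $\mathbb{F}^N$ with $N$ finite that routes the classifying map through $G_k(\mathbb{F}^N)$ rather than merely through $G_k(\mathbb{F}^\infty)$, and this is what bounds the rank of the inverse bundle $\eta$ by $N-k$. For a classifying map into the infinite Grassmannian no such rank bound exists, so there is no delicate analytic obstacle here; the proof is a clean application of the orthogonal splitting of the tautological bundle over a finite Grassmannian, and I expect the write-up to be short once the identification $f^*\gamma^N_k=\xi^k(M)\otimes\mathbb{F}$ is invoked.
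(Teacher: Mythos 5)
Your proof is correct, and it is essentially the argument behind the cited results. Note that the paper itself does not prove Lemma~\ref{th-intro-1} at all: it is quoted (``cf.'' \cite{high1,high2,cohen1}) from Cohen--Handel and Blagojevi\'c et al., with only the classifying-map diagram (\ref{eq-int-7}) and the pull-backs (\ref{eq-intro-1}), (\ref{eq-intro-2}) recorded as context. Your route --- identify $\xi^k(M)\otimes\mathbb{F}$ with $f^*\gamma^N_k$, split $\gamma^N_k\oplus(\gamma^N_k)^\perp\cong\underline{\mathbb{F}^N}$ over the finite Grassmannian, pull back to get a rank-$(N-k)$ inverse bundle $\eta$, and apply Whitney duality so that $\bar w(\xi^k(M))=w(\eta)$ (resp. $\bar c=c(\eta)$) vanishes above degree $N-k$ --- is exactly the standard proof in those references, and your closing observation that finiteness of $N$ is what bounds the rank of $\eta$ is the right point of emphasis.
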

 
 If  we  consider  hyper(di)graphs  on  $M$,  then  more  obstructions  for  $k$-regular  embeddings  
 can  be  obtained.  
 Let  $\vec{\mathcal{H}}_k(M)$  be  any  $\Sigma_k$-invariant 
   $k$-uniform  hyperdigraph  on  $M$.  
   Suppose   $\vec{\mathcal{H}}_k(M)$  is  a  submanifold  of  ${\rm  Conf}_k(M)$.  
 Let  $\mathcal{H}_k(M)= \vec{\mathcal{H}}_k(M)/\Sigma_k$.   
 Then  $ \mathcal{H}_k(M)$  is  a  submanifold  of  ${\rm  Conf}_k(M)/\Sigma_k$. 
 Moreover,  (\ref{eq-int-7})  induces  a  commutative  diagram  
  \begin{eqnarray}\label{eq-int-71}
 \xymatrix{
\mathcal{H}_k(M) \ar[r]^-{f} \ar[rd]_-{\iota\circ  f}  &G_k(\mathbb{F}^N)
 \ar[d]^-{\iota}\\
 & G_k(\mathbb{F}^\infty)
 }
 \end{eqnarray}
 such  that  $\iota\circ  f$  is  the  classifying  map  of  the  vector  bundle  
 \begin{eqnarray*}
 \xi (\mathcal{H}_k(M))\otimes \mathbb{F}:  \mathbb{F}^k\longrightarrow  \vec{\mathcal{H}}_k(M)\times_{\Sigma_k} \mathbb{F}^k\longrightarrow   \mathcal{H}_k(M).  
 \end{eqnarray*}
 Thus  when  $\mathbb{F}=\mathbb{R}$,  the  Stiefel-Whitney  class  of    $ \xi (\mathcal{H}_k(M))$  is  given  by  the  pull-back  
 \begin{eqnarray}\label{eq-intro-8}
 (\iota\circ  f)^*:   H^*(G_k(\mathbb{R}^\infty)) \longrightarrow   H^*( \mathcal{H}_k(M) ),  
 \end{eqnarray}
 and   when  $\mathbb{F}=\mathbb{C}$, 
  the  Chern  class   of  $ \xi (\mathcal{H}_k(M))\otimes \mathbb{C} $  is  given  by  the  pull-back  
 \begin{eqnarray}\label{eq-intro-9}
 (\iota\circ  f)^*:   H^*(G_k(\mathbb{C}^\infty)) \longrightarrow   H^*( \mathcal{H}_k(M)).  
 \end{eqnarray}
Here  in  (\ref{eq-intro-8})  and  (\ref{eq-intro-9}),     $H^*( \mathcal{H}(M))$  
is  the  usual  cohomology  ring  of  manifolds.  
Lemma~\ref{th-intro-1}  can  be  generalized     in    the  next  proposition.  
 
 \begin{proposition} 
 \label{th-intro-2}
 \begin{enumerate}[(1)]
 \item
 Let  $f: M\longrightarrow \mathbb{R}^N$  be  a $k$-regular embedding. If  
 there  exists  a  $k$-uniform  hypergraph  $\mathcal{H}_k(M)$  on  $M$  such  that  
 $\vec{\mathcal{H}}_k(M)=\pi^{-1}(\mathcal{H}_k(M))$  and  
 the  $t$-th   dual  Stiefel-Whitney  class
  \begin{eqnarray}\label{eq-intro-91}
  \bar w_t(\xi (\mathcal{H}_k(M)))\neq 0, 
  \end{eqnarray}
  then $N\geq t+k$; 
  \item
  Let  $f: M\longrightarrow \mathbb{C}^N$  be  a  complex $k$-regular embedding. If 
  there  exists  a  $k$-uniform  hypergraph  $\mathcal{H}_k(M)$  on  $M$  such  that
   $\vec{\mathcal{H}}_k(M)=\pi^{-1}(\mathcal{H}_k(M))$  and 
   the  $t$-th   dual   Chern  class
   \begin{eqnarray}\label{eq-intro-92}
   \bar  c_t(\xi (\mathcal{H}_k(M)))\neq 0, 
   \end{eqnarray}
   then $N\geq t+k$.    
  \end{enumerate}
 \end{proposition}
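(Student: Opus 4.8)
The plan is to mirror the proof of Lemma~\ref{th-intro-1}, replacing the configuration space ${\rm Conf}_k(M)/\Sigma_k$ and the diagram (\ref{eq-int-7}) by the hypergraph $\mathcal{H}_k(M)$ and the diagram (\ref{eq-int-71}). First I would record the consequence of $k$-regularity already encoded in (\ref{eq-int-71}): since $f$ sends a hyperedge $\{x_1,\ldots,x_k\}\in \mathcal{H}_k(M)$ to the $k$-plane spanned by the linearly independent vectors $f(x_1),\ldots,f(x_k)$, the classifying map $\iota\circ f$ of $\xi(\mathcal{H}_k(M))\otimes \mathbb{F}$ factors through the \emph{finite} Grassmannian $G_k(\mathbb{F}^N)$. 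Writing $\gamma_k^N$ for the tautological $\mathbb{F}$-$k$-plane bundle over $G_k(\mathbb{F}^N)$ and using $\iota^*\gamma_k^\infty=\gamma_k^N$, this identifies
\begin{eqnarray*}
\xi(\mathcal{H}_k(M))\otimes \mathbb{F}=(\iota\circ f)^*\gamma_k^\infty=f^*\gamma_k^N.
\end{eqnarray*}

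Next I would invoke the orthogonal (Hermitian) complement bundle $\gamma_k^{N,\perp}$, whose fiber over a $k$-plane $V\subseteq\mathbb{F}^N$ is $V^\perp$, so that $\gamma_k^N\oplus\gamma_k^{N,\perp}$ is trivial of rank $N$ and $\gamma_k^{N,\perp}$ has $\mathbb{F}$-rank $N-k$. Pulling back along $f$ then yields
\begin{eqnarray*}
\big(\xi(\mathcal{H}_k(M))\otimes \mathbb{F}\big)\oplus f^*\gamma_k^{N,\perp}\cong f^*\big(\gamma_k^N\oplus\gamma_k^{N,\perp}\big),
\end{eqnarray*}
a trivial bundle of rank $N$ over $\mathcal{H}_k(M)$. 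By the Whitney product formula and the vanishing of the characteristic classes of a trivial bundle, the total class of the complement is exactly the dual class of $\xi(\mathcal{H}_k(M))$: in the real case $w(f^*\gamma_k^{N,\perp})=\bar w(\xi(\mathcal{H}_k(M)))$, and in the complex case $c(f^*\gamma_k^{N,\perp})=\bar c(\xi(\mathcal{H}_k(M)))$.

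Finally I would read off the dimension bound from the rank of the complement. Since $f^*\gamma_k^{N,\perp}$ has rank $N-k$, its Stiefel--Whitney (resp.\ Chern) classes vanish in every degree exceeding $N-k$, so $\bar w_t(\xi(\mathcal{H}_k(M)))=0$ (resp.\ $\bar c_t(\xi(\mathcal{H}_k(M)))=0$) for all $t>N-k$. Contrapositively, hypotheses (\ref{eq-intro-91}) and (\ref{eq-intro-92}) force $t\le N-k$, i.e.\ $N\ge t+k$, giving (1) and (2) respectively. The only substantive input is the factorization through $G_k(\mathbb{F}^N)$, which is precisely where $k$-regularity of $f$ enters; once (\ref{eq-int-71}) is in hand the remainder is the standard dual-class/rank argument and poses no real obstacle. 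The point of the proposition, relative to Lemma~\ref{th-intro-1}, is that restricting to a sub-hypergraph $\mathcal{H}_k(M)$ of the full configuration space may produce a nonvanishing dual class in a degree where the corresponding class of $\xi^k(M)$ vanishes, thereby yielding sharper obstructions.
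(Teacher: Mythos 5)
Your proof is correct, but it takes a genuinely different route from the paper's. The paper's proof is a two-line reduction: the canonical inclusion $\varphi\colon \mathcal{H}_k(M)\longrightarrow {\rm Conf}_k(M)/\Sigma_k$ satisfies $\varphi^*(\xi^k(M))=\xi(\mathcal{H}_k(M))$ (Lemma~\ref{le-99.7ai}), so by naturality of (dual) Stiefel--Whitney and Chern classes, the hypothesis $\bar w_t(\xi(\mathcal{H}_k(M)))\neq 0$ (resp.\ $\bar c_t$) forces $\bar w_t(\xi^k(M))\neq 0$ (resp.\ $\bar c_t(\xi^k(M))\neq 0$), and Lemma~\ref{th-intro-1} is then invoked as a black box. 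You instead re-run the classical Cohen--Handel argument directly on the hypergraph: factor the classifying map through $G_k(\mathbb{F}^N)$ via (\ref{eq-int-71}), identify $\xi(\mathcal{H}_k(M))\otimes\mathbb{F}$ with $f^*\gamma_k^N$, and read the bound off the rank $N-k$ of the pulled-back complement bundle. Your version is self-contained (it reproves rather than cites Lemma~\ref{th-intro-1}) and makes the mechanism visible; the paper's version is shorter and, importantly, makes explicit the logical relation between the two obstructions.

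That relation is exactly where your closing remark goes wrong. You assert that restricting to a sub-hypergraph ``may produce a nonvanishing dual class in a degree where the corresponding class of $\xi^k(M)$ vanishes, thereby yielding sharper obstructions.'' This contradicts naturality: since $\xi(\mathcal{H}_k(M))=\varphi^*(\xi^k(M))$ and $\varphi^*$ is a ring homomorphism, one has $\bar w_t(\xi(\mathcal{H}_k(M)))=\varphi^*\bigl(\bar w_t(\xi^k(M))\bigr)$, so vanishing of $\bar w_t(\xi^k(M))$ forces vanishing of $\bar w_t(\xi(\mathcal{H}_k(M)))$ --- indeed this implication (in contrapositive form) \emph{is} the paper's entire proof. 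The hypergraph obstruction is therefore never sharper than the configuration-space one; as the introduction explains, its value is computational: $H^*(\mathcal{H}_k(M))$ and the restricted characteristic classes may be calculable when those of the full configuration space are not, and their nonvanishing then certifies the obstruction. This error does not affect the validity of your proof steps, but the interpretation should be corrected.
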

 
 \begin{proof}
 (1)   The  canonical  inclusion  $\varphi:  \mathcal{H}_k(M)\longrightarrow  {\rm  Conf}_k(M)/\Sigma_k$  
 induces  a  pull-back  of  vector  bundles  
 \begin{eqnarray*}
  \varphi^*(\xi^k(M))  =  \xi (\mathcal{H}_k(M)).  
 \end{eqnarray*}
 Thus  (\ref{eq-intro-91})  implies    (\ref{eq-intro-97}).  
 By  Lemma~\ref{th-intro-1}~(1),  we  obtain   $N\geq t+k$.   
 
 (2)   The  canonical  inclusion  $\varphi:  \mathcal{H}_k(M)\longrightarrow  {\rm  Conf}_k(M)/\Sigma_k$  
 induces  a  pull-back  of  vector  bundles  
 \begin{eqnarray*}
  \varphi^*(\xi^k(M)\otimes \mathbb{C})  =  \xi (\mathcal{H}_k(M))\otimes \mathbb{C}.  
 \end{eqnarray*}
 Thus  (\ref{eq-intro-92})  implies    (\ref{eq-intro-98}).  
 By  Lemma~\ref{th-intro-1}~(2),  we  obtain   $N\geq t+k$. 
 \end{proof}

\section{Double  complexes  for       configuration  spaces}\label{sec6}

In  this  section,  we  construct  double  complexes  for  configuration  spaces  by  
using the  double  complexes   for   $\Delta$-manifolds  in  Subsection~\ref{ssec-3.1}.

\begin{lemma}\label{le-8.2}
The  configuration  spaces    ${\rm  Conf}_\bullet(M)= ({\rm  Conf}_{n+1}(M))_{n\in\mathbb{N}}$
  is  a  $\Delta$-manifold  
with  the  face  maps  
\begin{eqnarray}\label{eq-7.a3}
\partial^n_i:   {\rm  Conf}_{n+1}(M)\longrightarrow  {\rm   Conf}_{n}(M) 
\end{eqnarray} 
sending  $(x_1,\ldots,  x_{n+1})$  to  $(x_1,\ldots,\widehat{x_{i+1}},  \ldots,  x_{n+1})$  by  removing the  $i$-th  coordinate.  
\end{lemma}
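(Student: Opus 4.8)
The plan is to verify directly the three requirements of Definition~\ref{def-7.1}~(1)--(2): that every term of the sequence is a smooth manifold, that the proposed face maps are smooth, and that they satisfy the $\Delta$-identity. Before checking these I would first confirm that the maps $\partial_i^n$ are well defined. If $(x_1,\ldots,x_{n+1})$ has pairwise distinct coordinates, then deleting any one of them leaves $n$ pairwise distinct points, so $\partial_i^n(x_1,\ldots,x_{n+1})\in {\rm Conf}_n(M)$; hence $\partial_i^n$ indeed lands in ${\rm Conf}_n(M)$ for each $i=0,\ldots,n$.

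For the smooth structure, I would invoke that by (\ref{eq-00bz}) each ${\rm Conf}_{n+1}(M)$ is an open submanifold of the product $M^{n+1}$, hence a smooth manifold of dimension $(n+1)m$. The face map $\partial_i^n$ is nothing but the restriction to ${\rm Conf}_{n+1}(M)$ of the coordinate-deletion projection $M^{n+1}\to M^n$, $(x_1,\ldots,x_{n+1})\mapsto(x_1,\ldots,\widehat{x_{i+1}},\ldots,x_{n+1})$, which simply forgets one Cartesian factor and is manifestly smooth. A restriction of a smooth map to an open submanifold is smooth, so every $\partial_i^n$ is smooth.

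The substantive step is the $\Delta$-identity. Fixing $0\le i<j\le n$ and starting from $(x_1,\ldots,x_{n+1})$, I would track coordinates through both composites. Applying $\partial_j^n$ first deletes $x_{j+1}$; the subsequent $\partial_i^{n-1}$ (with $i<j$, so $i+1\le j$) deletes the coordinate in slot $i+1$, which is still $x_{i+1}$, yielding the tuple obtained from $(x_1,\ldots,x_{n+1})$ by removing both $x_{i+1}$ and $x_{j+1}$. On the other side, applying $\partial_i^n$ first deletes $x_{i+1}$, after which $x_{j+1}$ has shifted into slot $j$ (deleting $x_{i+1}$ moves everything past position $i+1$ down by one, and $j>i$); then $\partial_{j-1}^{n-1}$ deletes slot $(j-1)+1=j$, again removing exactly $x_{j+1}$. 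Both composites therefore delete $x_{i+1}$ and $x_{j+1}$ and leave the remaining coordinates in their original order, so $\partial_i^{n-1}\partial_j^n=\partial_{j-1}^{n-1}\partial_i^n$, establishing the $\Delta$-identity. Combining the three verifications shows that ${\rm Conf}_\bullet(M)$ is a $\Delta$-manifold.

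I expect no genuine difficulty here; the only point demanding care is the double index shift --- the degree-$n$ term is ${\rm Conf}_{n+1}(M)$ and the map $\partial_i^n$ deletes $x_{i+1}$ rather than $x_i$ --- so the main obstacle is simply keeping the slot-tracking bookkeeping consistent so that the reindexing $j\mapsto j-1$ in the $\Delta$-identity comes out correctly.
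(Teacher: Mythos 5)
Your proof is correct and follows essentially the same route as the paper: a direct coordinate-tracking verification that both composites $\partial_i^{n-1}\partial_j^n$ and $\partial_{j-1}^{n-1}\partial_i^n$ delete exactly $x_{i+1}$ and $x_{j+1}$, which is precisely the paper's computation. Your additional checks of well-definedness and smoothness (each ${\rm Conf}_{n+1}(M)$ is open in $M^{n+1}$ and the face maps are restrictions of coordinate-deletion projections) are points the paper treats as immediate, so they only make the argument more complete.
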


\begin{proof}
It suffices  to  prove  the   $\Delta$-identity
 for  any  $n\in  \mathbb{N}$  and    any   $i,j\in\mathbb{N}$  such  that     $0\leq  i<j\leq  n$.     
 Let  $(x_1,\ldots,  x_{n+1})\in   {\rm  Conf}_{n+1}(M)$.   Then  
 \begin {eqnarray*}
  \partial^{n-1}_i \partial^n_j  (x_1,\ldots,  x_{n+1}) &=&    \partial^{n-1}_i ( x_1,\ldots, \widehat{x_{j+1}},
  \ldots,  x_{n+1})\\
  &=&  ( x_1,\ldots, \widehat{x_{i+1}},  \ldots,  \widehat{x_{j+1}},
  \ldots,  x_{n+1})\\
  &=&\partial ^{n-1}_{j-1}  ( x_1,\ldots, \widehat{x_{i+1}},  \ldots,    x_{n+1})\\
  &=&  \partial ^{n-1}_{j-1} \partial  ^n_i   (x_1,\ldots,  x_{n+1}).  
    \end{eqnarray*}
Therefore,   $\partial^{n-1}_i \partial^n_j= \partial ^{n-1}_{j-1} \partial  ^n_i$ 
and  consequently  ${\rm  Conf}_\bullet(M) $
  is  a  $\Delta$-manifold.    
\end{proof}

\begin{theorem}\label{pr-ab1}
$(\Omega^\bullet({\rm  Conf}_\bullet(M)),  d,\partial)$  is  a  double  complex  with   
a  graded  group  action  of  $\Sigma_n$  on  $\Omega^\bullet({\rm  Conf}_n(M))$  for  each 
$n\geq  1$  which  is  commutative  with  $d$.  
\end{theorem}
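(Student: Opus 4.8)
The plan is to obtain the double complex structure directly from the two preceding lemmas and then to build the symmetric group action by pulling back differential forms along the coordinate-permuting diffeomorphisms. First I would invoke Lemma~\ref{le-8.2}, which realizes ${\rm Conf}_\bullet(M)$ as a $\Delta$-manifold with the coordinate-deletion face maps $\partial_i^n$, and then apply Lemma~\ref{le-2.1} to this $\Delta$-manifold. Since Lemma~\ref{le-2.1} guarantees that $(\Omega^\bullet(A_\bullet), d, \partial)$ is a double complex for \emph{every} $\Delta$-manifold $A_\bullet$, specializing to $A_\bullet = {\rm Conf}_\bullet(M)$ immediately yields that $(\Omega^\bullet({\rm Conf}_\bullet(M)), d, \partial)$ is a double complex, with no further computation required.

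For the group action, I would fix $n \geq 1$ and, for each $s \in \Sigma_n$, consider the coordinate permutation $\sigma_s: {\rm Conf}_n(M) \to {\rm Conf}_n(M)$ sending $(x_1,\ldots,x_n)$ to $(x_{s(1)},\ldots,x_{s(n)})$. This is a diffeomorphism, since permuting distinct points keeps them distinct and the inverse is again such a permutation. The induced pullback $\sigma_s^\#$ on $\Omega^\bullet({\rm Conf}_n(M))$ then furnishes the action of $\Sigma_n$ on the degree-$n$ piece. To confirm that this assignment is a genuine group action I would appeal to the contravariant functoriality of pullback, $(\sigma_s \circ \sigma_t)^\# = \sigma_t^\# \sigma_s^\#$, together with the fact that composing coordinate permutations is again a coordinate permutation and that the identity permutation induces the identity on forms; the resulting map $\Sigma_n \to {\rm Aut}(\Omega^\bullet({\rm Conf}_n(M)))$ reverses or preserves order according to the left/right convention, but in either case it is a group action. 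The action is graded because each $\Sigma_n$ acts only on $\Omega^\bullet({\rm Conf}_n(M))$.

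The commutativity with $d$ is then simply the naturality of the exterior derivative: for any smooth map $f$ one has $f^\# d = d f^\#$, and applying this with $f = \sigma_s$ gives $\sigma_s^\# d = d \sigma_s^\#$ on each graded piece, which is precisely the asserted compatibility. I expect no substantial obstacle; the one point demanding care is to refrain from claiming compatibility with $\partial$, because the coordinate-deletion face maps $\partial_i^n$ do not commute with an arbitrary permutation---deleting a fixed slot and then permuting differs from first permuting and then deleting that slot---so the interaction of the $\Sigma_n$-action with the horizontal differential is genuinely nontrivial. The theorem therefore claims compatibility only with $d$, and I would state this restriction explicitly rather than attempt to force a relation with $\partial$.
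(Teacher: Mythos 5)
Your proposal is correct and follows essentially the same route as the paper: both obtain the double complex by specializing Lemma~\ref{le-2.1} to the $\Delta$-manifold structure of Lemma~\ref{le-8.2}, and both define the $\Sigma_n$-action via pullback along the coordinate-permutation diffeomorphisms, with commutativity with $d$ following from naturality of the exterior derivative. Your closing caution about not claiming compatibility with $\partial$ (and your explicit handling of the contravariance $(\sigma_s\circ\sigma_t)^\#=\sigma_t^\#\sigma_s^\#$) is sound and, if anything, slightly more careful than the paper's own wording.
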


\begin{proof}
By   Lemma~\ref{le-2.1}  and   Lemma~\ref{le-8.2},  we   obtain  a  double  complex
  $(\Omega^\bullet({\rm  Conf}_\bullet(M)),  d,\partial)$.  
  For  each  $n\geq  1$,  consider  the  group  action  of  
  $\Sigma_n$  on   ${\rm  Conf}_n(M)$  sending  $(x_1,\ldots,x_n)$  to  
  $(x_{s(1)},\ldots, x_{s(n)})$  for  any  $s\in \Sigma_n$.  
  The  induced  pull-back    of  differential  forms 
  \begin{eqnarray*}
  s^*:  \Omega^\bullet({\rm  Conf}_n(M))\longrightarrow  \Omega^\bullet({\rm  Conf}_n(M))   
  \end{eqnarray*}
  gives  a   group  action  of  $\Sigma_n$  on  $\Omega^\bullet({\rm  Conf}_n(M))$
  such  that  $(s_1\circ  s_2)^*=  s_2^* \circ  s_1^*$
   for  any  $s_1,s_2\in\Sigma_n$.    
 The  pull-back  maps $s^*$  of  differential  forms  commute  with  the  exterior  derivative,  i.e.    $s^*  \circ  d =   d\circ  s^*$  for  any  $s\in  \Sigma_n$.   
 Thus   the  group  action  of  $\Sigma_n$  on  $\Omega^\bullet({\rm  Conf}_n(M))$    is  commutative  with  $d$.  
\end{proof}

\begin{corollary}\label{co-mzoa1}
For  any  $r\geq 0$,  we  have  a  double  complex  $(\Omega^\bullet({\rm  Conf}_\bullet(M,r)),  d,\partial)$,  which  is  a   quotient  double  complex  of  
 $(\Omega^\bullet({\rm  Conf}_\bullet(M)),  d,\partial)$,    
     with   
a  graded  group  action  of  $\Sigma_n$  on  $\Omega^\bullet({\rm  Conf}_n(M,r))$     commutative  with  $d$  for  each 
$n\geq  1$.  
Moreover, let   
\begin{eqnarray}\label{eq-baq1}
\iota_r:  {\rm  Conf}_\bullet(M,r)\longrightarrow  {\rm  Conf}_\bullet(M)
\end{eqnarray}
  be  the  canonical  inclusion.  
  Then  
$({\rm   Ker}(\iota_r^\#))_{r\geq  0}$  is  a  family  of  double  complexes 
  giving  a  filtration  of   
 $(\Omega^\bullet({\rm  Conf}_\bullet(M)),  d,\partial)$.   
\end{corollary}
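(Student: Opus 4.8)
The plan is to realize ${\rm Conf}_\bullet(M,r)$ as a $\Delta$-submanifold of the $\Delta$-manifold ${\rm Conf}_\bullet(M)$ supplied by Lemma~\ref{le-8.2}, and then read off everything from Lemma~\ref{le-6-987} and Theorem~\ref{pr-ab1}. First I would verify the two conditions of Definition~\ref{def-7.1}~(3). Condition (ii) is immediate: since ${\rm Conf}_n(M,r)$ is cut out of ${\rm Conf}_n(M)$ by the strict inequalities $d(x_i,x_j)>2r$, it is an open subset and hence an open smooth submanifold. For condition (i), recall that the face map $\partial^n_i$ of Lemma~\ref{le-8.2} merely deletes one coordinate; passing from $(x_1,\ldots,x_{n+1})$ to a subtuple only removes pairwise-distance constraints, so it sends a point of ${\rm Conf}_{n+1}(M,r)$ to a point of ${\rm Conf}_n(M,r)$. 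Thus ${\rm Conf}_\bullet(M,r)$ is stable under all the face maps and is a $\Delta$-submanifold of ${\rm Conf}_\bullet(M)$.

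Next I would apply Lemma~\ref{le-6-987} to the canonical inclusion $\iota_r$ of (\ref{eq-baq1}). It produces the surjective chain map $\iota_r^\#$ of double complexes together with the isomorphism (\ref{eq-7.zvab}), which exhibits $(\Omega^\bullet({\rm Conf}_\bullet(M,r)),d,\partial)$ as the quotient double complex $(\Omega^\bullet({\rm Conf}_\bullet(M)),d,\partial)/{\rm Ker}(\iota_r^\#)$. The graded $\Sigma_n$-action is then transported exactly as in the proof of Theorem~\ref{pr-ab1}: permuting coordinates preserves every pairwise distance, so the $\Sigma_n$-action on ${\rm Conf}_n(M)$ restricts to ${\rm Conf}_n(M,r)$ (this is the equivariance recorded in Example~\ref{ex-2.9vz}~(4)), and the induced pull-backs on $\Omega^\bullet({\rm Conf}_n(M,r))$ commute with $d$ because pull-backs of differential forms always commute with the exterior derivative.

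For the filtration claim I would use the nesting ${\rm Conf}_n(M,s)\subseteq {\rm Conf}_n(M,r)$ valid for $0\leq r<s$ from Example~\ref{ex-2.9vz}~(2). A differential form on ${\rm Conf}_\bullet(M)$ that restricts to zero on the larger space ${\rm Conf}_\bullet(M,r)$ a fortiori restricts to zero on the smaller space ${\rm Conf}_\bullet(M,s)$, so ${\rm Ker}(\iota_r^\#)\subseteq {\rm Ker}(\iota_s^\#)$. Each ${\rm Ker}(\iota_r^\#)$ is a sub-double complex, being the kernel of a homomorphism of double complexes, and ${\rm Ker}(\iota_0^\#)=0$ since ${\rm Conf}_\bullet(M,0)={\rm Conf}_\bullet(M)$ by Example~\ref{ex-2.9vz}~(1). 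Hence $({\rm Ker}(\iota_r^\#))_{r\geq 0}$ is an increasing filtration of $(\Omega^\bullet({\rm Conf}_\bullet(M)),d,\partial)$ by sub-double complexes.

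The only step requiring genuine care is the stability of ${\rm Conf}_\bullet(M,r)$ under the face maps, namely that deleting a coordinate can never violate the hard-distance constraint; everything else is a direct transcription of Lemma~\ref{le-6-987} and Theorem~\ref{pr-ab1}, together with the bookkeeping check that the kernel inclusions run in the direction dictated by the reversed nesting of the spaces.
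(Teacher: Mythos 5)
Your proof is correct and follows essentially the same route as the paper: exhibit ${\rm Conf}_\bullet(M,r)$ as a $\Delta$-submanifold of ${\rm Conf}_\bullet(M)$, apply Lemma~\ref{le-6-987} to get the surjective chain map $\iota_r^\#$ and the quotient double complex, transport the $\Sigma_n$-action as in Theorem~\ref{pr-ab1} via the equivariance of Example~\ref{ex-2.9vz}~(4), and use the nesting from Example~\ref{ex-2.9vz}~(1)--(2) to get the increasing family of kernels. You even supply a detail the paper leaves implicit (openness of ${\rm Conf}_n(M,r)$ and stability under the face maps), while the paper additionally records the exhaustiveness property $\lim_{r\to+\infty}{\rm Ker}(\iota_r^\#)=\Omega^\bullet({\rm Conf}_\bullet(M))$ for path-connected $M$, which your argument omits but which is not needed for the filtration claim as stated.
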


\begin{proof}
Note  that         $ {\rm  Conf}_\bullet(M,r)$   is   
a   $\Delta$-submanifold  of       $ {\rm  Conf}_\bullet(M )$.  
 By  Lemma~\ref{le-6-987},  the  canonical  inclusion  
 (\ref{eq-baq1})    
 induces  a  surjective  chain    map   
  \begin{eqnarray*}
  \iota_r^\#:  (\Omega^\bullet({\rm  Conf}_\bullet(M)),  d,\partial)\longrightarrow  (\Omega^\bullet({\rm  Conf}_\bullet(M,r)),  d,\partial)
\end{eqnarray*}
of  double  complexes  and  
\begin{eqnarray*}
 (\Omega^\bullet({\rm  Conf}_\bullet(M,r)),  d,\partial)\cong  
 \frac{ (\Omega^\bullet({\rm  Conf}_\bullet(M)),  d,\partial)}{{\rm   Ker}  (   \iota_r^\#) }  
\end{eqnarray*}
is  a   quotient  double  complex  of  
 $(\Omega^\bullet({\rm  Conf}_\bullet(M)),  d,\partial)$.   
   With  the  help  of  Example~\ref{ex-2.9vz}~(4),   since  $ {\rm  Conf}_n(M,r)$  is  a  $\Sigma_n$-invariant  submanifold  of  
$ {\rm  Conf}_n(M)$   for  each  $n\geq  1$,  the  induced  pull-backs  of  differential  forms  
   give  a group  action  of  $\Sigma_n$  on  $\Omega^\bullet({\rm  Conf}_n(M,r))$     commutative  with  $d$.    Moreover,  with  the  help  of  Example~\ref{ex-2.9vz}~(1)  -  (3)  respectively,  
 we  have  that  the  family  $({\rm   Ker}(\iota_r^\#))_{r\geq  0}$     of  double  complexes  
 satisfies  
 \begin{enumerate}[(1)]
  \item
    $ {\rm   Ker}  (   \iota_0^\#) = 0$;   
        \item   
  $ {\rm   Ker}  (   \iota_r^\#)  \subseteq   {\rm   Ker}  (   \iota_s^\#) $        for  any   $0\leq  r  <s$; 
  \item
  if  $M$  is  path-connected,  then   
$
\lim_{r\to+\infty}   {\rm   Ker}  (   \iota_r^\#)  =   (\Omega^\bullet({\rm  Conf}_\bullet(M)),  d,\partial)$.   
  \end{enumerate}
  Thus   $({\rm   Ker}(\iota_r^\#))_{r\geq  0}$  
  gives  a  filtration  of   
 $(\Omega^\bullet({\rm  Conf}_\bullet(M)),  d,\partial)$.
\end{proof}

A  {\it  continuous  total  order}  on  $M$  
is  a  total  order  $\prec  $   on  $M$  such  that  for  any  $p\prec  q$,  $p,q\in  M$,  
there  exists     open   neighborhoods  $U_p$  of  $p$  and  $U_q$  of  $q$  such  that 
$p'\prec  q'$  for  any  $p'\in  U_p$  and  any  $q'\in  U_q$.  
For examples,   
\begin{enumerate}[(1)]
\item
  let  $M=V$.   Then  any  total  order  on  $V$  is  a  continuous   total  order;    
 \item
  let  $M=\mathbb{R}$.  Then   the  canonical  order   on  $\mathbb{R}$  is  a  continuous  total  order;
  \item
  let $M=\bigcup_{i\in  V}  \mathbb{R}_i$  be  a  disjoint  union  of   real  lines  indexed  by  $V$.  
  Then  a  total  order  on  $V$  and 
   a  continuous  total  order  on     $\mathbb{R}_i$  for  each  $i\in V$  
  give  a  continuous  total  order  on  $M$.  
\end{enumerate}
Conversely,  if  $M$  has  a  continuous  total  order  indexed  by  
a  subset  of  $\mathbb{R}$,  then  
$M$  can  be  continuously  embedded  into  $\mathbb{R}$  hence  $M$  must be  one  
of  the  above  three  examples.

\begin{lemma}\label{le-8.222}
Suppose  $M$  has  a  fixed   continuous  total  order  $\prec$.  
Then  the  configuration  spaces    ${\rm  Conf}_\bullet(M)/\Sigma_\bullet= ({\rm  Conf}_{n+1}(M)/\Sigma_{n+1})_{n\in\mathbb{N}}$
  is  a  $\Delta$-manifold  
with  the  face  maps  
\begin{eqnarray}\label{eq-7.a93}
\partial^n_i:   {\rm  Conf}_{n+1}(M)/\Sigma_{n+1}\longrightarrow  {\rm   Conf}_{n}(M) /\Sigma_n
\end{eqnarray} 
sending  $\{x_1,\ldots,  x_{n+1}\}$  to  $\{x_1,\ldots,\widehat{x_{i+1}},  \ldots,  x_{n+1}\}$  by  removing the  $i$-th  coordinate,  where  $x_1\prec  \cdots\prec  x_{n+1}$.  
\end{lemma}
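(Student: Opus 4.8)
The plan is to deduce the unordered statement from its ordered counterpart, Lemma~\ref{le-8.2}, by using the total order $\prec$ to sort each unordered configuration into a canonical ordered representative. For each $n$ I would introduce the sorted configuration space
\[
{\rm Conf}_n^\prec(M) = \{(x_1,\ldots,x_n)\in {\rm Conf}_n(M) \mid x_1\prec\cdots\prec x_n\}
\]
and the sorting map $s_n\colon {\rm Conf}_n(M)/\Sigma_n \to {\rm Conf}_n^\prec(M)$ sending $\{x_1,\ldots,x_n\}$ to its unique $\prec$-increasing arrangement. Since $\prec$ is a total order and the points of a configuration are distinct, each $\Sigma_n$-orbit meets ${\rm Conf}_n^\prec(M)$ in exactly one point, so $s_n$ is a well-defined bijection, inverse to the restriction of $\pi_n$. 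Writing $\tilde\partial^n_i$ for the ordered face maps of (\ref{eq-7.a3}), the face map $\partial^n_i$ of (\ref{eq-7.a93}) is then precisely $\pi_n\circ\tilde\partial^n_i\circ s_{n+1}$.

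The first substantive step is to show that $s_n$ is smooth, and here the continuity of $\prec$ enters decisively. Given a configuration with $x_1\prec\cdots\prec x_n$, I would use the Hausdorff property of $M$ together with the continuity hypothesis on $\prec$ to produce pairwise disjoint open neighborhoods $U_i\ni x_i$ such that $p\prec q$ whenever $p\in U_i$, $q\in U_j$ and $i<j$. Then $(U_1\times\cdots\times U_n)\cap{\rm Conf}_n(M)$ is an open set on which $\pi_n$ restricts to a diffeomorphism onto a neighborhood of $\{x_1,\ldots,x_n\}$ in the quotient, namely a single sheet of the covering $\pi_n$, and on this neighborhood $s_n$ coincides with the inverse of that diffeomorphism. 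Hence $s_n$ is smooth. Consequently each ${\rm Conf}_n(M)/\Sigma_n$ is already a smooth manifold, being the base of the covering $\pi_n$, and each face map $\partial^n_i=\pi_n\circ\tilde\partial^n_i\circ s_{n+1}$ is a composite of smooth maps, hence smooth.

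It remains to verify the $\Delta$-identity, which I would obtain formally from Lemma~\ref{le-8.2}. The key observations are that removing a coordinate from a $\prec$-increasing tuple leaves a $\prec$-increasing tuple, so $\tilde\partial^n_i$ maps ${\rm Conf}_{n+1}^\prec(M)$ into ${\rm Conf}_n^\prec(M)$, and that $s_n\circ\pi_n$ is the identity on ${\rm Conf}_n^\prec(M)$. Using these, for $0\le i<j\le n$ the middle factor $s_n\circ\pi_n$ in $\partial^{n-1}_i\circ\partial^n_j=\pi_{n-1}\circ\tilde\partial^{n-1}_i\circ s_n\circ\pi_n\circ\tilde\partial^n_j\circ s_{n+1}$ drops out, giving $\partial^{n-1}_i\circ\partial^n_j=\pi_{n-1}\circ\tilde\partial^{n-1}_i\circ\tilde\partial^n_j\circ s_{n+1}$, and likewise $\partial^{n-1}_{j-1}\circ\partial^n_i=\pi_{n-1}\circ\tilde\partial^{n-1}_{j-1}\circ\tilde\partial^n_i\circ s_{n+1}$. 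Since $\tilde\partial^{n-1}_i\tilde\partial^n_j=\tilde\partial^{n-1}_{j-1}\tilde\partial^n_i$ by Lemma~\ref{le-8.2}, the two right-hand sides agree, so the $\Delta$-identity holds and ${\rm Conf}_\bullet(M)/\Sigma_\bullet$ is a $\Delta$-manifold.

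I expect the only real obstacle to be the smoothness of the sorting map $s_n$: this is exactly the point where the global combinatorial choice of an ordering of the points must be reconciled with the smooth structure, and it would fail for a general, non-continuous total order. Everything else is either immediate from the definitions or a formal transport of Lemma~\ref{le-8.2} along the diffeomorphisms $s_\bullet$.
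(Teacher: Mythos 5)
Your proof is correct and fills in detail at precisely the point where the paper is terse, but the routes differ. The paper's own proof handles the $\Delta$-identity by a direct element computation---writing $\{x_1,\ldots,x_{n+1}\}$ with $x_1\prec\cdots\prec x_{n+1}$ and observing that deleting the $(i+1)$-th and $(j+1)$-th points in either order produces the same set---and then disposes of smoothness in a single sentence (``since the order $\prec$ on $M$ is continuous, the face maps $\partial^n_i$ are smooth''). You instead factor each unordered face map as $\pi_n\circ\tilde\partial^n_i\circ s_{n+1}$ through the sorted configuration space ${\rm Conf}_{n+1}^\prec(M)$, transport the $\Delta$-identity formally from Lemma~\ref{le-8.2} using that $\tilde\partial^n_i$ preserves sortedness and that $s_n\circ\pi_n$ is the identity on sorted tuples, and---most importantly---prove that the sorting section $s_n$ is smooth via the local-sheet argument (pairwise disjoint, order-compatible neighborhoods $U_1,\ldots,U_n$ on which $\pi_n$ restricts to a diffeomorphism whose inverse is $s_n$). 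That argument is exactly the justification the paper's one-line smoothness claim omits, and it is the only place where the continuity hypothesis on $\prec$ is genuinely used; as a byproduct it shows that ${\rm Conf}_n^\prec(M)$ is open in ${\rm Conf}_n(M)$ and that the covering $\pi_n$ admits a smooth global section. The paper's computation is shorter on the combinatorial side; yours supplies the differential-geometric content and is the more complete argument.
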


\begin{proof}
The  proof  is  similar   with  Lemma~\ref{le-8.2}.  
Let  $\{x_1,\ldots,  x_{n+1}\}\in   {\rm  Conf}_{n+1}(M)/\Sigma_{n+1}$
 such  that    $x_1\prec  \cdots\prec  x_{n+1}$.     Then    for  any   $0\leq  i<j\leq  n$,  
 \begin {eqnarray*}
  \partial^{n-1}_i \partial^n_j  \{x_1,\ldots,  x_{n+1}\}  
  &=& \{ x_1,\ldots, \widehat{x_{i+1}},  \ldots,  \widehat{x_{j+1}},
  \ldots,  x_{n+1}\}\\
 & = &    
      \partial ^{n-1}_{j-1} \partial  ^n_i   \{x_1,\ldots,  x_{n+1}\}.  
    \end{eqnarray*}
Therefore,   $\partial^{n-1}_i \partial^n_j= \partial ^{n-1}_{j-1} \partial  ^n_i$.  
Since  the  order  $\prec$  on  $M$  is  continuous, 
    the  face  maps  $\partial^n_i$     are   smooth  maps
     for  $1\leq  i\leq  n$.  Consequently,  ${\rm  Conf}_\bullet(M) /\Sigma_\bullet$
  is  a  $\Delta$-manifold.    
\end{proof}

\begin{theorem}\label{pr-ab122222}
Suppose  $M$  has  a  fixed   continuous  total  order  $\prec$.  
Then  
\begin{eqnarray}\label{eq-abz}
(\Omega^\bullet({\rm  Conf}_\bullet(M)/\Sigma_\bullet),  d,\partial) 
\end{eqnarray}
 a   sub-double  complex  of  $(\Omega^\bullet({\rm  Conf}_\bullet(M)),  d,\partial)$  consisting  of  the  $\Sigma_\bullet$-invariant  elements.   
\end{theorem}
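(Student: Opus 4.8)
The plan is to realize $(\Omega^\bullet({\rm Conf}_\bullet(M)/\Sigma_\bullet),d,\partial)$ inside $(\Omega^\bullet({\rm Conf}_\bullet(M)),d,\partial)$ through the covering maps $\pi_n$ of (\ref{eq-0.a1}). By Lemma~\ref{le-8.222} the continuous total order $\prec$ makes ${\rm Conf}_\bullet(M)/\Sigma_\bullet$ a $\Delta$-manifold, so Lemma~\ref{le-2.1} already furnishes a double complex $(\Omega^\bullet({\rm Conf}_\bullet(M)/\Sigma_\bullet),d,\partial)$; the task is to identify it with the $\Sigma_\bullet$-invariant part of the double complex of Theorem~\ref{pr-ab1}. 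Each $\pi_n$ is a surjective local diffeomorphism, so its pull-back $\pi_n^{*}:\Omega^\bullet({\rm Conf}_n(M)/\Sigma_n)\to\Omega^\bullet({\rm Conf}_n(M))$ is injective, and I would first check that its image is exactly the invariant subspace $\{\omega:\ s^{*}\omega=\omega\ \text{for all}\ s\in\Sigma_n\}$: the inclusion ``$\subseteq$'' is immediate from $\pi_n\circ s=\pi_n$, and ``$\supseteq$'' follows because an invariant form descends along the free, properly discontinuous $\Sigma_n$-action.

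To organize both the image computation and the compatibility with $\partial$, I would exploit the open fundamental domain $U_n=\{(x_1,\dots,x_n):x_1\prec\cdots\prec x_n\}$. Since $\prec$ is a continuous total order, every configuration of distinct points has a unique ordering pattern, so ${\rm Conf}_n(M)=\bigsqcup_{s\in\Sigma_n}s\cdot U_n$ is a disjoint union of clopen pieces and $\pi_n|_{U_n}:U_n\to{\rm Conf}_n(M)/\Sigma_n$ is a diffeomorphism. Consequently a $\Sigma_n$-invariant form is determined by its restriction to $U_n$, which gives a clean bijection with $\Omega^\bullet({\rm Conf}_n(M)/\Sigma_n)$ and re-proves the image characterization above.

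Commutativity of $\pi_\bullet^{*}$ with $d$ is automatic: pull-back commutes with the exterior derivative, and by Theorem~\ref{pr-ab1} the $\Sigma_n$-action commutes with $d$, so $d$ preserves invariance. The crux is commutativity with $\partial$. Here I would reduce to a pointwise identity of face maps on the fundamental domain: for an increasingly ordered tuple in $U_{n+1}$, deleting the $(i+1)$-st coordinate (the ordered face map of Lemma~\ref{le-8.2}) yields an increasingly ordered tuple in $U_n$, and this deletion coincides with the quotient face map $\partial_i^n$ of Lemma~\ref{le-8.222}, which deletes the $(i+1)$-st smallest point. Thus $\pi_{n+1}^{*}\partial=\partial\,\pi_n^{*}$ holds after restriction to $U_{n+1}$, and I would then propagate this identity to all of ${\rm Conf}_{n+1}(M)$ using the $\Sigma_{n+1}$-equivariance afforded by the clopen decomposition.

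The main obstacle is precisely this last propagation step. Away from $U_\bullet$ the ordered face maps do \emph{not} commute with $\pi$, so reconciling the alternating sum $\partial=\sum_i(-1)^i(\partial_i^n)^{*}$ with the symmetric-group action is delicate: one must track the signs through the standard relation $\partial_i^n\circ s=s'\circ\partial_j^n$ expressing a face map precomposed with a permutation as a (reindexed) face map postcomposed with a permutation $s'\in\Sigma_n$, and confirm that the resulting character on $\Omega^\bullet({\rm Conf}_{n+1}(M))$ is the one carried by the image of $\pi_{n+1}^{*}$. It is exactly the continuous total order that removes this difficulty, both by making the quotient face maps of Lemma~\ref{le-8.222} well defined and by producing the global fundamental domain on which the ordered and unordered $\Delta$-structures coincide. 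Once $\pi_\bullet^{*}$ is established as a morphism of double complexes, injectivity together with the image characterization identifies $(\Omega^\bullet({\rm Conf}_\bullet(M)/\Sigma_\bullet),d,\partial)$ with the sub-double complex of $\Sigma_\bullet$-invariant forms, which completes the proof.
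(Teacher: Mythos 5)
The part of your proposal that is solid---$\pi_n^{*}$ is injective and its image is exactly the $\Sigma_n$-invariant forms---is precisely the paper's own proof, which stops at that graded-vector-space identification. You go further and correctly isolate the real issue, compatibility with $\partial$, but this is where your argument has a genuine gap: the propagation step cannot be carried out, because the identity you want to propagate off the fundamental domain is false there. Concretely, take $M=\mathbb{R}$, $n=1$, and $\omega\in\Omega^{0}({\rm Conf}_{1}(M)/\Sigma_{1})=C^{\infty}(\mathbb{R})$. With the ordered face maps of Lemma~\ref{le-8.2}, $\partial(\pi_{1}^{*}\omega)(x_{1},x_{2})=\omega(x_{2})-\omega(x_{1})$, which is \emph{anti}-invariant under the transposition in $\Sigma_{2}$; since the image of $\pi_{2}^{*}$ consists of invariant forms, $\partial(\pi_{1}^{*}\omega)$ does not lie in that image at all. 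Meanwhile $\pi_{2}^{*}(\partial\omega)(x_{1},x_{2})=\omega(\max(x_{1},x_{2}))-\omega(\min(x_{1},x_{2}))$: the two forms agree on $U_{2}=\{x_{1}\prec x_{2}\}$, exactly as you say, but differ by a sign on the other clopen component, so $\pi_{\bullet}^{*}\partial\neq\partial\,\pi_{\bullet}^{*}$ and the invariant forms are \emph{not} closed under the ordered $\partial=\sum_i(-1)^i(\partial_i^n)^{*}$. The sign bookkeeping you defer, done via $\partial_{i}^{n}\circ s=s'\circ\partial_{s(i+1)-1}^{n}$ with ${\rm sign}(s')={\rm sign}(s)\,(-1)^{i+1+s(i+1)}$, ends with the \emph{sign} character of $\Sigma_{n+1}$, not the trivial character carried by the image of $\pi_{n+1}^{*}$; so the final confirmation you call for fails, and with it your proof (and, read literally, the statement itself, since a sub-double complex ``consisting of the $\Sigma_\bullet$-invariant elements'' would require invariant forms to be $\partial$-stable).

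What your fundamental-domain setup does make possible is a corrected argument: replace $\pi_n^{*}$ by its sign-twist $j_n$, defined by $(j_n\omega)\vert_{s\cdot U_n}={\rm sign}(s)\,(\pi_n^{*}\omega)\vert_{s\cdot U_n}$. On each component $s\cdot U_{n+1}$ one has $\pi_n\circ\partial_i^n=\partial_{s(i+1)-1}^n\circ\pi_{n+1}$ (ordered face map on the left, unordered on the right), and the locally constant factor ${\rm sign}(s')$ recombines with $(-1)^i$ to give ${\rm sign}(s)(-1)^{s(i+1)-1}$, whence $\partial\circ j_n=j_{n+1}\circ\partial$ globally; the image of $j_\bullet$ is the subspace of forms transforming by the sign character, which (unlike the invariants) is closed under $\partial$. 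That proves the theorem with ``invariant'' replaced by ``sign-isotypic,'' or equivalently with $\pi_\bullet^{*}$ replaced by $j_\bullet$. Note that the paper's proof never checks $\partial$-compatibility at all, so it does not settle this point either; your proposal's weakness is that it asserts, rather than verifies, the sign computation, and the verification in fact goes the other way.
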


\begin{proof}
By   Lemma~\ref{le-2.1}  and   Lemma~\ref{le-8.222},  we   obtain  a  double  complex
  (\ref{eq-abz}).  Let  $n\geq 1$.  The  covering  map  (\ref{eq-0.a1})  induces  a  pull-back     of  differential  forms 
  \begin{eqnarray}\label{eq-cht}
  \pi_n^*:  \Omega^\bullet({\rm  Conf}_n(M)/\Sigma_n)\longrightarrow  \Omega^\bullet({\rm  Conf}_n(M)).  
  \end{eqnarray}
  Since  $\pi_n$  is  surjective,  the   induced  map    
  (\ref{eq-cht})  is  injective.  Moreover,  the  image  of   (\ref{eq-cht})   is  the  space  of  the  differential  forms  on  ${\rm  Conf}_n(M) $   that  is  
   invariant under  the   permutation   of  $(x_1,\ldots,x_n)$  in  (\ref{eq-00bz}).  
   Therefore,      $\Omega^\bullet ({\rm  Conf}_n(M)/\Sigma_n)$   in  (\ref{eq-abz})   of  all  the  differential  forms  on   ${\rm  Conf}_n(M)/\Sigma_n $  
    is  isomorphic  to  the  space  of  the  differential  forms  on  ${\rm  Conf}_n(M) $   that  is  
   invariant under  the   permutation   of   $(x_1,\ldots,x_n)$  in  (\ref{eq-00bz}).
    Consequently,  (\ref{eq-abz})  a   sub-double  complex  of  $(\Omega^\bullet({\rm  Conf}_\bullet(M)),  d,\partial)$  consisting  of  the  $\Sigma_n$-invariant  elements  for  any  $n\geq  1$.   
   \end{proof}

\begin{corollary}\label{co-mzoa2}
Suppose  $M$  has  a  fixed   continuous  total  order  $\prec$.  
Then  for  any  $r\geq 0$,  we  have     $(\Omega^\bullet({\rm  Conf}_\bullet(M,r)/\Sigma_\bullet),  d,\partial)$  as   a   quotient  double  complex  of  
 $(\Omega^\bullet({\rm  Conf}_\bullet(M)/\Sigma_\bullet),  d,\partial)$.      
  Moreover,  let   
  \begin{eqnarray}\label{eq-baq2}
  \iota_r/\Sigma_\bullet: {\rm  Conf}_\bullet(M,r)/\Sigma_\bullet\longrightarrow   {\rm  Conf}_\bullet(M)/\Sigma_\bullet
  \end{eqnarray}
  be  the  canonical  inclusion.  Then  
$({\rm   Ker}((\iota_r/\Sigma_\bullet)^\#))_{r\geq  0}$  is  a  family  of  double  complexes 
  giving  a  filtration  of   
 $(\Omega^\bullet({\rm  Conf}_\bullet(M)/\Sigma_\bullet),  d,\partial)$.   
\end{corollary}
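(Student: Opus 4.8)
The plan is to mirror the proof of Corollary~\ref{co-mzoa1} in the unordered setting, substituting Lemma~\ref{le-8.222} for Lemma~\ref{le-8.2} wherever the ambient $\Delta$-manifold structure is invoked. The continuous total order hypothesis is exactly what makes this substitution legitimate.

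First I would invoke Lemma~\ref{le-8.222}: the fixed continuous total order $\prec$ makes ${\rm Conf}_\bullet(M)/\Sigma_\bullet$ a $\Delta$-manifold, with face maps $\partial_i^n$ deleting the $i$-th vertex from the $\prec$-sorted representative $\{x_1,\ldots,x_{n+1}\}$. Next I would check that ${\rm Conf}_\bullet(M,r)/\Sigma_\bullet$ is a $\Delta$-submanifold in the sense of Definition~\ref{def-7.1}~(3). Both conditions are immediate: the pairwise-distance constraint $d(x_i,x_j)>2r$ is inherited by every subset, so $\partial_i^n$ carries ${\rm Conf}_{n+1}(M,r)/\Sigma_{n+1}$ into ${\rm Conf}_n(M,r)/\Sigma_n$; and each ${\rm Conf}_n(M,r)/\Sigma_n$ is cut out of ${\rm Conf}_n(M)/\Sigma_n$ by strict inequalities, hence is an open submanifold, as recorded in Example~\ref{ex-2.9vz}.

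Granting the $\Delta$-submanifold structure, I would apply Lemma~\ref{le-6-987} to the canonical inclusion (\ref{eq-baq2}). This produces a surjective chain map of double complexes $(\iota_r/\Sigma_\bullet)^\#$ and, via the isomorphism (\ref{eq-7.zvab}), identifies $(\Omega^\bullet({\rm Conf}_\bullet(M,r)/\Sigma_\bullet), d, \partial)$ with the quotient of $(\Omega^\bullet({\rm Conf}_\bullet(M)/\Sigma_\bullet), d, \partial)$ by ${\rm Ker}((\iota_r/\Sigma_\bullet)^\#)$, settling the first assertion.

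For the filtration I would verify the three properties of Corollary~\ref{co-mzoa1}, now reading off the unordered clauses of Example~\ref{ex-2.9vz}. Part~(1) gives ${\rm Conf}_n(M,0)/\Sigma_n = {\rm Conf}_n(M)/\Sigma_n$, so ${\rm Ker}((\iota_0/\Sigma_\bullet)^\#)=0$. Part~(2) gives ${\rm Conf}_n(M,s)/\Sigma_n \subseteq {\rm Conf}_n(M,r)/\Sigma_n$ for $r<s$; since a form vanishing on the larger space also vanishes on the smaller one, ${\rm Ker}((\iota_r/\Sigma_\bullet)^\#) \subseteq {\rm Ker}((\iota_s/\Sigma_\bullet)^\#)$. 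Part~(3) gives that the spaces exhaust to the empty set as $r\to+\infty$ when $M$ is path-connected, forcing the kernels to increase to the whole double complex. The only step warranting care is the $\Delta$-submanifold verification above; it is where the continuous-order hypothesis is genuinely used, since it is precisely this order that, through Lemma~\ref{le-8.222}, supplies the $\Delta$-manifold structure on the quotient along whose face maps the hard-sphere spaces must be shown to be closed. Everything else is formal.
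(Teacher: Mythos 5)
Your proposal is correct and follows essentially the same route as the paper's own proof: both establish that ${\rm Conf}_\bullet(M,r)/\Sigma_\bullet$ is a $\Delta$-submanifold of the $\Delta$-manifold ${\rm Conf}_\bullet(M)/\Sigma_\bullet$ supplied by Lemma~\ref{le-8.222}, then apply Lemma~\ref{le-6-987} to the inclusion (\ref{eq-baq2}) for the quotient double complex, and read off the three filtration properties from Example~\ref{ex-2.9vz}~(1)--(3). Your explicit verification of the two conditions in Definition~\ref{def-7.1}~(3) (face maps preserve the hard-sphere constraint; openness from the strict inequalities) spells out a step the paper merely asserts, which is a welcome addition but not a different argument.
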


\begin{proof}
The  proof  is  similar with  Corollary~\ref{co-mzoa1}.  
Since  $M$  has  a  continuous total  order,  
$ {\rm  Conf}_\bullet(M,r)/\Sigma_\bullet$   is   
a   $\Delta$-submanifold  of       $ {\rm  Conf}_\bullet(M )/\Sigma_\bullet$.  
By  Lemma~\ref{le-6-987},  the  canonical  inclusion  
 (\ref{eq-baq2})    
 induces  a  surjective  chain    map   
  \begin{eqnarray*}
 ( \iota_r/\Sigma_\bullet)^{\#}:  (\Omega^\bullet({\rm  Conf}_\bullet(M)/\Sigma_\bullet),  d,\partial)\longrightarrow  (\Omega^\bullet({\rm  Conf}_\bullet(M,r)/\Sigma_\bullet),  d,\partial)
\end{eqnarray*}
of  double  complexes  and  
\begin{eqnarray*}
 (\Omega^\bullet({\rm  Conf}_\bullet(M,r)/\Sigma_\bullet),  d,\partial)\cong 
 \frac{  (\Omega^\bullet({\rm  Conf}_\bullet(M)/\Sigma_\bullet),  d,\partial)}{{\rm   Ker}  (   (\iota_r/\Sigma_\bullet)^\#)}   
\end{eqnarray*}
is  a   quotient  double  complex  of  
 $(\Omega^\bullet({\rm  Conf}_\bullet(M)/\Sigma_\bullet),  d,\partial)$.   
With  the  help  of  Example~\ref{ex-2.9vz}~(1)  -  (3)  respectively,  
 we  have  that  the  family  $({\rm   Ker}(( \iota_r/\Sigma_\bullet)^{\#}))_{r\geq  0}$     of  double  complexes  
 satisfies  
 \begin{enumerate}[(1)]
  \item
    $ {\rm   Ker}  (   ( \iota_0/\Sigma_\bullet)^{\#}) = 0$;   
        \item   
  $ {\rm   Ker}  (   ( \iota_r/\Sigma_\bullet)^{\#})  \subseteq   {\rm   Ker}  (  ( \iota_s/\Sigma_\bullet)^{\#}) $        for  any   $0\leq  r  <s$; 
  \item
  if  $M$  is  path-connected,  then   
$
\lim_{r\to+\infty}   {\rm   Ker}  (   ( \iota_r/\Sigma_\bullet)^{\#})  =   (\Omega^\bullet({\rm  Conf}_\bullet(M)/\Sigma_\bullet),  d,\partial)$.   
  \end{enumerate}
  Thus   $({\rm   Ker}( ( \iota_r/\Sigma_\bullet)^{\#})_{r\geq  0}$  
  gives  a  filtration  of   
 $(\Omega^\bullet({\rm  Conf}_\bullet(M)/\Sigma_\bullet),  d,\partial)$.  
\end{proof}

\begin{remark}
If  $M$  is  assumed to  have a  continuous  total  order
indexed  by  a  subset  of  $\mathbb{R}$,    then  we  have  
$\dim  M\leq  1$  thus  
the  double  complexes  in  Theorem~\ref{pr-ab1}  and  
Corollary~\ref{co-mzoa2}  only have  $0$-forms and  $1$-forms.  
\end{remark}

\section{Double  complexes  for  hypergraphs}\label{sec7}

In  this  section,  we  construct  double  complexes  for  hypergraphs  with  vertices  
from  manifolds  by  
using the  double  complexes   for   graded   submanifolds  of  $\Delta$-manifolds 
  in  Subsection~\ref{ss2.3a}   as  well  as  the  double  complexes  for  associated  $\Delta$-manifolds   in  Subsection~\ref{ssec-3.3}.

Let  $\vec{\mathcal{H}}(M)$  be  a   hyperdigraph  on  $M$  given by  a  union  of      uniform  hyperdigraphs 
 \begin{eqnarray*}
\vec{\mathcal{H}}(M)= \bigcup_{n=1}^\infty\vec{\mathcal{H}}_n(M).  
 \end{eqnarray*}
 Then   $\vec{\mathcal{H}}_n(M)$  is  a  submanifold  of  ${\rm  Conf}_n(M)$  for each  $n\geq 1$.  
  By  Definition~\ref{def-2.3mko},  
   \begin{eqnarray*}
   \vec{\mathcal{H}}(M)=(\vec{\mathcal{H}}_{n}(M))_{n\geq  1} 
   \end{eqnarray*}
      is  a  graded  submanifold  of  the  $\Delta$-manifold  
  ${\rm  Conf}_\bullet(M)$.  
  Let  $\epsilon_n$  be  the  canonical  embedding  
  of   $\vec{\mathcal{H}}_n(M)$  into  ${\rm  Conf}_n(M)$.  
  If  we  let   $A_\bullet={\rm Conf}_\bullet(M)$  and  $B_\bullet= \vec{\mathcal{H}} (M)$
   in  (\ref{eq-mm1n})  and (\ref{eq-mm2n})  respectively,  then  we  
   have  the  infimum  chain  complex  as  well  as  the  supremum  chain  complex  
  \begin{eqnarray*}
   {\rm  Inf}^\bullet(\Omega^\bullet(\vec{\mathcal{H}} (M))),  
  ~~~~~~  
     {\rm  Sup}^\bullet(\Omega^\bullet(\vec{\mathcal{H}} (M))).       
  \end{eqnarray*}
  \begin{theorem}
  \label{th-main1}
  We  have  
  \begin{enumerate}[(1)]
  \item
   sub-double  complexes  
    \begin{eqnarray}\label{eq-lzpaqgj98}
 ( {\rm  Inf}_\bullet(\Omega^\bullet(\epsilon_\bullet),  d,   \partial ), ~~~~~~   ( {\rm  Sup}_\bullet(\Omega^\bullet(\epsilon_\bullet),  d,   \partial )
  \end{eqnarray}
  of    $(\Omega^\bullet({\rm  Conf}_\bullet(M)),  d,\partial)$ 
  such  that  the  canonical  inclusion  
\begin{eqnarray*}
\theta:    {\rm  Inf}_\bullet(\Omega^\bullet(\epsilon_\bullet))
\longrightarrow  {\rm  Sup}_\bullet(\Omega^\bullet(\epsilon_\bullet))    \end{eqnarray*}
 is  a  quasi-isomorphism  with respect  to  $\partial$; 
 \item
    quotient  double  complexes  
     \begin{eqnarray}\label{eq-aab}
 ( {\rm  Inf}^\bullet(\Omega^\bullet(\vec{\mathcal{H}} (M))),  d,   \partial ), ~~~~~~   ( {\rm  Sup}^\bullet(\Omega^\bullet(\vec{\mathcal{H}} (M))),  d,   \partial )
  \end{eqnarray}
of   $(\Omega^\bullet({\rm  Conf}_\bullet(M)),  d,\partial)$     such  that  the  canonical  quotient map  
\begin{eqnarray*}
q:  {\rm  Sup}^\bullet(\Omega^\bullet(\vec{\mathcal{H}} (M)))\longrightarrow  {\rm  Inf}^\bullet(\Omega^\bullet(\vec{\mathcal{H}} (M)))
\end{eqnarray*}
 is  a  quasi-isomorphism  with respect  to $\partial$.  
 \end{enumerate} 
  \end{theorem}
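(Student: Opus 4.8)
The plan is to apply the general machinery of Subsection~\ref{ss2.3a}, specifically Lemma~\ref{le-1.1}, to the concrete situation where the ambient $\Delta$-manifold is the configuration space. First I would invoke Lemma~\ref{le-8.2} to record that $A_\bullet = {\rm Conf}_\bullet(M)$ is a $\Delta$-manifold, equipped with the vertex-deletion face maps (\ref{eq-7.a3}). Next I would note that $B_\bullet = \vec{\mathcal{H}}(M) = (\vec{\mathcal{H}}_n(M))_{n\geq 1}$ is a graded submanifold of $A_\bullet$ in the sense of Definition~\ref{def-2.3mko}: by hypothesis each $\vec{\mathcal{H}}_n(M)$ is a submanifold of ${\rm Conf}_n(M)$, and $\epsilon_\bullet$ is the family of canonical embeddings. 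It is worth emphasizing that no compatibility of $B_\bullet$ with the face maps is needed, since $B_\bullet$ is only required to be a graded submanifold, not a $\Delta$-submanifold.

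With these two identifications in place, both conclusions follow directly from Lemma~\ref{le-1.1}. For part (1), Lemma~\ref{le-1.1}(1) yields that ${\rm Inf}_\bullet(\Omega^\bullet(\epsilon_\bullet))$ and ${\rm Sup}_\bullet(\Omega^\bullet(\epsilon_\bullet))$ --- obtained by applying (\ref{eq-2.z.z.1}) and (\ref{eq-2.z.z.2}) to the graded subspace ${\rm Ker}\,\epsilon_\bullet^\#$ of $\Omega^\bullet({\rm Conf}_\bullet(M))$ --- are sub-double complexes of $(\Omega^\bullet({\rm Conf}_\bullet(M)),d,\partial)$, and that the canonical inclusion $\theta$ of (\ref{eq-kk.1}) is a quasi-isomorphism with respect to $\partial$. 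The quasi-isomorphism is ultimately the abstract statement (\ref{eq-0.mpa}) from \cite{h1} transported to this setting, and the commutativity $d\,\epsilon_n^\# = \epsilon_n^\#\,d$ is what guarantees that $d$ descends so that $\theta$ respects the full double-complex structure.

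For part (2), Lemma~\ref{le-1.1}(2) supplies the quotient double complexes ${\rm Inf}^\bullet(\Omega^\bullet(\vec{\mathcal{H}}(M)))$ and ${\rm Sup}^\bullet(\Omega^\bullet(\vec{\mathcal{H}}(M)))$ of (\ref{eq-mm1n}) and (\ref{eq-mm2n}), together with the canonical quotient map $q$ of (\ref{eq-mzvoaq}), which is a quasi-isomorphism with respect to $\partial$ by Lemma~\ref{le-0.zam1}. The one point I would check explicitly is that these quotient objects depend only on $\vec{\mathcal{H}}(M)$ and not on the chosen ambient $\Delta$-manifold; this independence is precisely Lemma~\ref{le-hqlca7}, so it requires no new argument here.

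Since all the substantive content --- the double-complex axioms, the infimum and supremum constructions, and both quasi-isomorphisms --- has already been established at the level of arbitrary $\Delta$-manifolds and their graded submanifolds in Section~\ref{sec3}, I do not anticipate any genuine obstacle in this theorem. The only care needed is bookkeeping: confirming that ${\rm Conf}_\bullet(M)$ satisfies the hypotheses of Lemma~\ref{le-1.1} (handled by Lemma~\ref{le-8.2}) and that the notation ${\rm Inf}^\bullet,\ {\rm Sup}^\bullet$ in the statement agrees, under the substitution $A_\bullet = {\rm Conf}_\bullet(M)$ and $B_\bullet = \vec{\mathcal{H}}(M)$, with the quotient-complex notation of (\ref{eq-mm1n})--(\ref{eq-mm2n}).
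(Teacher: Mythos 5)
Your proposal is correct and follows essentially the same route as the paper: the paper's proof is exactly the specialization of Lemma~\ref{le-1.1} to $A_\bullet = {\rm Conf}_\bullet(M)$ (a $\Delta$-manifold by Lemma~\ref{le-8.2}) and $B_\bullet = \vec{\mathcal{H}}(M)$ as a graded submanifold. Your additional bookkeeping --- checking the $\Delta$-manifold hypothesis explicitly, noting that no face-map compatibility is required of $\vec{\mathcal{H}}(M)$, and citing Lemma~\ref{le-hqlca7} for independence of the ambient complex --- is all consistent with the paper's setup and adds nothing incorrect.
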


  \begin{proof}
  In   Lemma~\ref{le-1.1},  we  let  $A_\bullet$  be  the  
  $\Delta$-manifold  ${\rm Conf}_\bullet(M)$ 
  and  let  $B_\bullet$ be   the  graded  submanifold  
  $\vec{\mathcal{H}}(M)$.  
  Then we  obtain  the  theorem.  
  \end{proof}

 \begin{theorem}
 \label{th-main1aa}
 For  any   hyperdigraph  $\vec{\mathcal{H}}(M)$  on  $M$,  
 we have  surjective  homomorphisms  of  double  complexes 
 \begin{eqnarray}\label{eq-9.laqgh}
&&(\Omega^\bullet( \Delta \vec{\mathcal{H}}(M)),  d,\partial)\longrightarrow   ({\rm  Sup}^\bullet(\vec{\mathcal{H}} (M)), d,\partial)
\nonumber\\
&&\longrightarrow ( {\rm  Inf}^\bullet(\vec{\mathcal{H}} (M)),   d,\partial)
\longrightarrow  (\Omega^\bullet( \delta \vec{\mathcal{H}}(M)),  d,\partial).  
 \end{eqnarray}
 Moreover,  $  \vec{\mathcal{H}}(M) $  is  a  $\Delta$-submanifold  of  ${\rm   Conf}_\bullet(M)$
  iff  all   the three  homomorphisms in  (\ref{eq-9.laqgh})  are  the  identity.  
 \end{theorem}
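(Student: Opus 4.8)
The plan is to realize this theorem as the direct specialization of the abstract Lemma~\ref{le-sqc} to the concrete $\Delta$-manifold of configuration spaces. First I would recall from Lemma~\ref{le-8.2} that $A_\bullet := {\rm Conf}_\bullet(M)$ is a $\Delta$-manifold, whose face maps $\partial^n_i$ delete a single coordinate from an ordered tuple. Since the standing assumption on a hyperdigraph guarantees that each $\vec{\mathcal{H}}_n(M)$ is a submanifold of ${\rm Conf}_n(M)$, the family $B_\bullet := \vec{\mathcal{H}}(M) = (\vec{\mathcal{H}}_n(M))_{n\geq 1}$ is a graded submanifold of $A_\bullet$ in the sense of Definition~\ref{def-2.3mko}. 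Feeding this pair $(A_\bullet, B_\bullet)$ into Lemma~\ref{le-sqc} produces a sequence of surjective homomorphisms of double complexes of the shape (\ref{eq-sqckgoa}).

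The one step that requires genuine checking is the identification of the two endpoints of that sequence. I would verify that the associated $\Delta$-manifold $\Delta B_\bullet$ of Definition~\ref{def-0ma2h}(1) coincides with the associated directed simplicial complex $\Delta\vec{\mathcal{H}}(M)$ of (\ref{eq-2.1aammm}), and likewise that $\delta B_\bullet = \delta\vec{\mathcal{H}}(M)$. The key point is that, inside ${\rm Conf}_\bullet(M)$, the $\Delta$-closure $\Delta p$ of a point $p = (x_1,\ldots,x_n)$ (Definition~\ref{def-0ma1h}) is obtained by iterating the coordinate-deleting face maps; removing one entry at a time while preserving the order produces exactly the collection of all non-empty ordered subsequences of $(x_1,\ldots,x_n)$, which is by definition $\Delta\vec\sigma(M)$. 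Taking the union over $p \in \vec{\mathcal{H}}(M)$ then yields $\Delta B_\bullet = \Delta\vec{\mathcal{H}}(M)$, and the dual bookkeeping for the closures contained in $B_\bullet$ yields $\delta B_\bullet = \delta\vec{\mathcal{H}}(M)$.

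With these identifications in hand, the sequence (\ref{eq-sqckgoa}) becomes verbatim the sequence (\ref{eq-9.laqgh}) (recalling that ${\rm Sup}^\bullet(\vec{\mathcal{H}}(M))$ abbreviates ${\rm Sup}^\bullet(\Omega^\bullet(\vec{\mathcal{H}}(M)))$, and similarly for ${\rm Inf}^\bullet$), which settles the first assertion. For the ``iff'' statement I would simply quote the second half of Lemma~\ref{le-sqc}: all three homomorphisms are the identity precisely when $B_\bullet$ is a $\Delta$-submanifold of $A_\bullet$, and unwinding the definitions this is exactly the condition that $\vec{\mathcal{H}}(M)$ be a $\Delta$-submanifold of ${\rm Conf}_\bullet(M)$.

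The main obstacle, such as it is, is purely the combinatorial identification in the second paragraph; there is no analytic difficulty, since all the work on differential forms—the commutativity of $d$ with $\partial$, the quasi-isomorphism $q$, and the surjectivity of the quotient maps—has already been discharged at the abstract level in Section~\ref{sec3}. One need only take care that the chosen indexing convention for the face maps (deleting exactly one coordinate at a time) is matched consistently against the subsequence description, so that no off-by-one discrepancy creeps into the identification $\Delta p = \Delta\vec\sigma(M)$.
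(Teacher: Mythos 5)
Your proposal is correct and follows essentially the same route as the paper: the paper's proof of Theorem~\ref{th-main1aa} is precisely the one-line specialization of Lemma~\ref{le-sqc} to $A_\bullet = {\rm Conf}_\bullet(M)$ and $B_\bullet = \vec{\mathcal{H}}(M)$. Your additional check that the $\Delta$-closure $\Delta p$ of a point $p=(x_1,\ldots,x_n)$ under the coordinate-deleting face maps of Lemma~\ref{le-8.2} recovers exactly the ordered subsequences, so that $\Delta B_\bullet = \Delta\vec{\mathcal{H}}(M)$ and $\delta B_\bullet = \delta\vec{\mathcal{H}}(M)$, is a detail the paper leaves implicit but is consistent with its definitions.
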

 
 \begin{proof}
 In   Lemma~\ref{le-sqc},   we   let  
 $A_\bullet$  be  the  $\Delta$-manifold   ${\rm   Conf}_\bullet(M)$  and  let   
 $B_\bullet$  be  the  graded  submanifold  $\vec{\mathcal{H}}(M) $.   
We  obtain  the  theorem.   
 \end{proof}

  \begin{example}
  By  Theorem~\ref{pr-ab1},  we  have  a  $\Sigma_n$-action  on  $(\Omega^\bullet({\rm  Conf}_\bullet(M)),  d,\partial)$  commutative  with  $d$.  
  Suppose  $\vec{\mathcal{H}}_n(M)$  is  $\Sigma_n$-invariant  for  each  $n\geq  1$.  
  That   is,   we  have  
  \begin{eqnarray*}
  s(\vec{\sigma}(M))\in  \vec{\mathcal{H}}_n(M) 
  \end{eqnarray*}  
  for  any  $\vec{\sigma}(M)\in \vec{\mathcal{H}}_n(M)$  and  any  $s\in \Sigma_n$.  
  Then  the  $\Sigma_n$-action  on  $(\Omega^\bullet({\rm  Conf}_\bullet(M)),  d,\partial)$  
  induces  a   $\Sigma_n$-action  on   
  the  two   sub-double     complexes  in  (\ref{eq-lzpaqgj98})  
  as  well as  the  two   quotient  double    complexes  in  
  (\ref{eq-aab}).

  In  particular, 
   let  $\vec{\mathcal{H}}_\bullet (M)$  be  ${\rm  Conf}_\bullet(M,r)$  for  $r\geq  0$.  
  Then  by  Corollary~\ref{le-2.99a},  both  of   the   double     complexes in  (\ref{eq-lzpaqgj98}) 
  are   equal to  
\begin{eqnarray*}
( {\rm  Ker} (\iota_r^\#),   d,  \partial)
\end{eqnarray*}
where  $\iota_r$  is  given  by (\ref{eq-baq1})  and  
   both  of   the   the   double     complexes in  (\ref{eq-aab})  are   equal to  
      \begin{eqnarray*}
      (\Omega^\bullet({\rm  Conf}_\bullet(M,r)),  d,\partial). 
      \end{eqnarray*}
  \end{example}

  Suppose  $M$  has  a  fixed  continuous  total  order  $\prec$.  
  Then  ${\rm  Conf}_\bullet (M)/\Sigma_\bullet$  is  a  $\Delta$-manifold.  
 Let  
 \begin{eqnarray*}
 \mathcal{H}_n(M)=\pi_n(\vec{\mathcal{H}}_n(M)) 
 \end{eqnarray*}
   for  each  $n\geq  1$  and  let
 \begin{eqnarray*}
 \mathcal{H}(M)=\bigcup_{n\geq  1}    \mathcal{H}_n(M) 
 \end{eqnarray*}
   be  a  hypergraph  on  $M$.  
 Then   $\mathcal{H}(M)$  is  a  graded  submanifold  of  ${\rm  Conf}_\bullet (M)/\Sigma_\bullet$.  
 Let  $\epsilon_n/\Sigma_n$  be  the  canonical  embedding  of  
 $\mathcal{H}_n(M)$  into  ${\rm  Conf}_n(M)/\Sigma_n$  for each  $n\geq  1$.  
   If  we  let   $A_\bullet={\rm Conf}_\bullet(M)/\Sigma_\bullet$  and  $B_\bullet= {\mathcal{H}} (M)$
  in  Subsection~\ref{ss2.3a},  then  we  
   have  the  infimum  chain  complex  as  well  as  the  supremum  chain  complex   
  \begin{eqnarray*}
  {\rm  Inf}^\bullet(\Omega^\bullet({\mathcal{H}} (M))),  
  ~~~~~~
 {\rm  Sup}^\bullet(\Omega^\bullet({\mathcal{H}} (M)) )      
  \end{eqnarray*}
   for  (\ref{eq-mm1n})  and (\ref{eq-mm2n})  respectively.  

  \begin{theorem}
  \label{th-main2}
   Suppose  $M$  has  a  fixed  continuous  total  order  $\prec$.  
  Then  we  have  
  \begin{enumerate}[(1)]
  \item
   sub-double  complexes  
    \begin{eqnarray}\label{eq-lzpaqgj9822}
 ( {\rm  Inf}_\bullet(\Omega^\bullet(\epsilon_\bullet/\Sigma_\bullet),  d,   \partial ), ~~~~~~   ( {\rm  Sup}_\bullet(\Omega^\bullet(\epsilon_\bullet/\Sigma_\bullet),  d,   \partial )
  \end{eqnarray}
  of    $(\Omega^\bullet({\rm  Conf}_\bullet(M)/\Sigma_\bullet),  d,\partial)$ 
  such  that  the  canonical  inclusion  
\begin{eqnarray*}
\theta:    {\rm  Inf}_\bullet(\Omega^\bullet(\epsilon_\bullet/\Sigma_\bullet))
\longrightarrow  {\rm  Sup}_\bullet(\Omega^\bullet(\epsilon_\bullet/\Sigma_\bullet))  
  \end{eqnarray*}
 is  a  quasi-isomorphism  with respect  to  $\partial$;   
 \item
    quotient  double  complexes  
     \begin{eqnarray}\label{eq-aab22}
 ( {\rm  Inf}^\bullet({\mathcal{H}} (M)),  d,   \partial ), ~~~~~~   ( {\rm  Sup}^\bullet({\mathcal{H}} (M)),  d,   \partial )
  \end{eqnarray}
of   $(\Omega^\bullet({\rm  Conf}_\bullet(M)/\Sigma_\bullet),  d,\partial)$     such  that  the  canonical  quotient map  
\begin{eqnarray*}
q:  {\rm  Sup}^\bullet({\mathcal{H}} (M))\longrightarrow  {\rm  Inf}^\bullet({\mathcal{H}} (M))
\end{eqnarray*}
 is  a  quasi-isomorphism  with respect  to $\partial$.  
 \end{enumerate} 
  \end{theorem}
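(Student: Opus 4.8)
The plan is to mirror the proof of Theorem~\ref{th-main1} verbatim, replacing the ordered configuration spaces by their unordered quotients. First I would invoke Lemma~\ref{le-8.222}: since $M$ carries a fixed continuous total order $\prec$, the graded family ${\rm Conf}_\bullet(M)/\Sigma_\bullet$ is a $\Delta$-manifold, with smooth face maps $\partial^n_i$ given by (\ref{eq-7.a93}). This is exactly the point where the order hypothesis is used, since without a continuous total order the orbit space need not carry a well-defined smooth $\Delta$-set structure; the continuity of $\prec$ is precisely what makes the vertex-deletion maps on equivalence classes smooth.

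Next I would check that $\mathcal{H}(M)=\bigcup_{n\geq 1}\mathcal{H}_n(M)$ is a graded submanifold of ${\rm Conf}_\bullet(M)/\Sigma_\bullet$ in the sense of Definition~\ref{def-2.3mko}. By hypothesis each $\mathcal{H}_n(M)=\pi_n(\vec{\mathcal{H}}_n(M))$ is a submanifold of ${\rm Conf}_n(M)/\Sigma_n$, and the canonical embeddings $\epsilon_n/\Sigma_n$ assemble into the required graded embedding $\epsilon_\bullet/\Sigma_\bullet$. Thus the pair $(A_\bullet,B_\bullet)=\big({\rm Conf}_\bullet(M)/\Sigma_\bullet,\,\mathcal{H}(M)\big)$ satisfies the hypotheses of the general machinery of Subsection~\ref{ss2.3a}.

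With these two facts in place, I would apply Lemma~\ref{le-1.1} with $A_\bullet={\rm Conf}_\bullet(M)/\Sigma_\bullet$ and $B_\bullet=\mathcal{H}(M)$. Part (1) of that lemma yields the sub-double complexes (\ref{eq-lzpaqgj9822}) together with the quasi-isomorphism $\theta$ with respect to $\partial$, giving part (1) of the theorem; part (2) yields the quotient double complexes (\ref{eq-aab22}) together with the quasi-isomorphism $q$ with respect to $\partial$, giving part (2).

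Since the statement is a direct specialization of Lemma~\ref{le-1.1}, there is no genuine obstacle in the argument: the only substantive verification is that the ambient graded family is a $\Delta$-manifold, and this is exactly the content of Lemma~\ref{le-8.222} under the continuous-total-order hypothesis. Everything else follows formally from the abstract infimum/supremum double-complex construction, just as in the ordered case.
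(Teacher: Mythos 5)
Your proposal is correct and follows essentially the same route as the paper: the paper's proof of Theorem~\ref{th-main2} simply substitutes $A_\bullet={\rm Conf}_\bullet(M)/\Sigma_\bullet$ (a $\Delta$-manifold by Lemma~\ref{le-8.222}, using the continuous total order) and $B_\bullet=\mathcal{H}(M)$ into Lemma~\ref{le-1.1}, exactly as you do. Your additional explicit verification that $\mathcal{H}(M)$ is a graded submanifold in the sense of Definition~\ref{def-2.3mko} is a point the paper handles in the paragraph preceding the theorem, so nothing is missing.
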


  \begin{proof}
  In   Lemma~\ref{le-1.1},  we  substitute  $A_\bullet$  with  the  
  $\Delta$-manifold  ${\rm Conf}_\bullet(M)/\Sigma_\bullet$ 
  and  substitute  $B_\bullet$ with  the  graded  submanifold  
  ${\mathcal{H}}(M)$.  
  Then we  obtain  the  theorem.  
  \end{proof}

 \begin{theorem}
 \label{th-main1bb}
  Suppose  $M$  has  a  fixed  continuous  total  order  $\prec$. 
 For  any     hypergraph  $ {\mathcal{H}}(M)$  on  $M$,  
 we have  surjective  homomorphisms  of  double  complexes 
 \begin{eqnarray}\label{eq-77.laqgh}
&&(\Omega^\bullet( \Delta  {\mathcal{H}}(M)),  d,\partial)\longrightarrow   ({\rm  Sup}^\bullet( {\mathcal{H}} (M)), d,\partial)
\nonumber\\
&&\longrightarrow ( {\rm  Inf}^\bullet( {\mathcal{H}} (M)),   d,\partial)
\longrightarrow  (\Omega^\bullet( \delta {\mathcal{H}}(M)),  d,\partial).  
 \end{eqnarray}
 Moreover,  $   {\mathcal{H}}(M) $  is  a  $\Delta$-submanifold  of  ${\rm   Conf}_\bullet(M)/\Sigma_\bullet$
  iff  all   the three  homomorphisms in  (\ref{eq-77.laqgh})  are  the  identity.  
 \end{theorem}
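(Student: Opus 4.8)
The plan is to mirror the proof of Theorem~\ref{th-main1aa} almost verbatim, applying the abstract Lemma~\ref{le-sqc} with the ambient $\Delta$-manifold taken to be the \emph{unordered} configuration space. The single new ingredient, compared with the ordered case, is that one must first exhibit ${\rm Conf}_\bullet(M)/\Sigma_\bullet$ as a $\Delta$-manifold, and this is precisely where the continuous total order hypothesis is used.

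First I would invoke Lemma~\ref{le-8.222}: since $M$ carries a fixed continuous total order $\prec$, the sequence ${\rm Conf}_\bullet(M)/\Sigma_\bullet = ({\rm Conf}_{n+1}(M)/\Sigma_{n+1})_{n\in\mathbb{N}}$ is a $\Delta$-manifold, whose face map $\partial^n_i$ arranges the $n+1$ points in increasing order with respect to $\prec$ and then deletes the $i$-th one. Continuity of $\prec$ is exactly what makes each $\partial^n_i$ a smooth map, so that Lemma~\ref{le-2.1} applies and $(\Omega^\bullet({\rm Conf}_\bullet(M)/\Sigma_\bullet), d, \partial)$ is a genuine double complex.

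Next I would record that $\mathcal{H}(M)$ sits inside this $\Delta$-manifold as a graded submanifold. Indeed, each $\mathcal{H}_n(M) = \pi_n(\vec{\mathcal{H}}_n(M))$ is by assumption a submanifold of ${\rm Conf}_n(M)/\Sigma_n$, so by Definition~\ref{def-2.3mko} the family $\mathcal{H}(M) = (\mathcal{H}_n(M))_{n\ge 1}$ is a graded submanifold of ${\rm Conf}_\bullet(M)/\Sigma_\bullet$, with canonical embeddings $\epsilon_n/\Sigma_n$.

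With these two facts in hand, I would apply Lemma~\ref{le-sqc} directly, taking $A_\bullet = {\rm Conf}_\bullet(M)/\Sigma_\bullet$ and $B_\bullet = \mathcal{H}(M)$. Its conclusion is exactly the sequence of surjective homomorphisms of double complexes displayed in (\ref{eq-77.laqgh}): the first two maps come from specializing (\ref{eq-mkaoq1}) with the ambient $\Delta$-manifold replaced by the associated $\Delta$-manifold $\Delta\mathcal{H}(M)$, and the last map is the surjective pull-back $\Omega^\bullet(\mathcal{H}(M)) \to \Omega^\bullet(\delta\mathcal{H}(M))$ induced by the inclusion $\delta\mathcal{H}(M) \hookrightarrow \mathcal{H}(M)$. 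The final ``iff'' assertion is then the ``Moreover'' clause of Lemma~\ref{le-sqc}, which identifies the three maps being the identity with the condition $\Delta\mathcal{H}(M) = \mathcal{H}(M) = \delta\mathcal{H}(M)$, i.e. with $\mathcal{H}(M)$ being a $\Delta$-submanifold of ${\rm Conf}_\bullet(M)/\Sigma_\bullet$. I expect no genuine obstacle beyond bookkeeping; the only substantive point has already been disposed of in Lemma~\ref{le-8.222}, namely the smoothness of the face maps, which the continuous-total-order hypothesis guarantees. Everything downstream is a formal consequence of the machinery of Section~\ref{sec3}.
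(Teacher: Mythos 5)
Your proposal is correct and follows the paper's own proof essentially verbatim: the paper also proves this theorem by applying Lemma~\ref{le-sqc} with $A_\bullet = {\rm Conf}_\bullet(M)/\Sigma_\bullet$ and $B_\bullet = \mathcal{H}(M)$, relying on Lemma~\ref{le-8.222} (via the section's setup) for the $\Delta$-manifold structure on the unordered configuration spaces. Your additional remarks on the graded submanifold structure and the role of the continuous total order simply make explicit what the paper establishes in the paragraphs preceding Theorem~\ref{th-main2}.
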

 
 \begin{proof}
 In   Lemma~\ref{le-sqc},   we   let  
 $A_\bullet$  be  the  $\Delta$-manifold   ${\rm   Conf}_\bullet(M)/\Sigma_\bullet$  and  let   
 $B_\bullet$  be  the  graded  submanifold  ${\mathcal{H}}(M) $.   
We  obtain  the  theorem.   
 \end{proof}

\begin{remark}
If  $M$  is  assumed to  have a  continuous  total  order
indexed  by  a  subset  of  $\mathbb{R}$,    then  
the  double  complexes  in  Theorem~\ref{th-main2}  and  
Theorem~\ref{th-main1bb}   only have  $0$-forms and  $1$-forms.  
\end{remark}


\begin{example}\label{ex-7.9}
Let  $M=V$  be  a  discrete  set.   Then  in   Theorem~\ref{th-main2}  and  Theorem~\ref{th-main1bb},  
\begin{enumerate}[(1)]
\item
  the  exterior  derivative  $d$  is  trivial  hence  all  the  double  complexes  in  
(\ref{eq-lzpaqgj9822}),   (\ref{eq-aab22})   and   (\ref{eq-77.laqgh}) 
  reduce  to   chain  complexes  with  boundary  map 
$\partial$;  
\item
the  two  complexes  in  (\ref{eq-aab22})   can  be  respectively   
obtained  from   the  infimum  chain  complex  ${\rm  Inf}_\bullet(\mathcal{H})$  and  
the  supremum  chain  complex    ${\rm  Sup}_\bullet(\mathcal{H})$, 
which  are    defined  in  \cite[Section~3.1]{h1},   
by  applying the  Hom  functor; 
\item
a  hypergraph  $\mathcal{H}$  on  $V$  is  a  $\Delta$-submanifold  of  
${\rm  Conf}_\bullet(V)/\Sigma_\bullet$  iff  
$\mathcal{H}$  is  a  simplicial complex.  
\end{enumerate}
\end{example}

 \section*{Acknowledgement}   The   author  would like to express his  deep
gratitude to the referee for the  careful reading of the manuscript.

    \bigskip

Shiquan Ren

Address:
School  of  Mathematics and Statistics,  Henan University,  Kaifeng   475004,  China.

e-mail:  renshiquan@henu.edu.cn

  \end{document}